\newtheorem{theorem}{Theorem}[section]
\newtheorem{lemma}[theorem]{Lemma}
\theoremstyle{definition}
\newtheorem{definition}[theorem]{Definition}
\newtheorem{example}[theorem]{Example}
\theoremstyle{remark}
\newtheorem{remark}[theorem]{Remark}
\newcommand{\Omdh}{{\widehat{\Omega}_{\delta}}}
\newcommand{\Omdc}{{\Omega_{\delta}^{c}}}
\numberwithin{equation}{section}
\crefname{equation}{}{}
\crefname{theorem}{Theorem}{Theorems}
\crefname{figure}{Figure}{Figures}
\crefname{table}{Table}{Tables}
\crefname{section}{Section}{Sections}
\crefname{example}{Example}{Examples}
\crefname{lemma}{Lemma}{Lemmas}
\crefname{remark}{Remark}{Remarks}
\begin{document}

\title[Error estimates of FEM for nonlocal problems]{Error estimates of finite element methods for nonlocal problems using exact or approximated interaction neighborhoods}



\author{Qiang Du}
\thanks{Department of Applied Physics and Applied Mathematics, and Data Science Institute, Columbia University, NY 10027, USA (qd2125@columbia.edu)}

\author{Hehu Xie}
\thanks{LSEC, NCMIS, Academy of Mathematics and Systems Science,
Chinese Academy of Sciences, Beijing 100190, \& School of Mathematical Sciences,
University of Chinese Academy of
Sciences, Beijing, 100049, China (hhxie@lsec.cc.ac.cn)}

\author{Xiaobo Yin}
\thanks{School of Mathematics and Statistics, and Key Laboratory of Nonlinear Analysis \& Applications (Ministry of Education), Central China Normal University, Wuhan 430079, China (yinxb@ccnu.edu.cn)}

\author{Jiwei Zhang}
\thanks{School of Mathematics and Statistics, and Hubei Key Laboratory of Computational Science, Wuhan University, Wuhan 430072, China (jiweizhang@whu.edu.cn)}

\subjclass[2020]{Primary 65R20, 74S05, 46N20, 46N40, 45A05}

\date{}

\keywords{Nonlocal models, integrable kernel, approximate interaction neighborhood, conforming discontinuous Galerkin, peridynamics, nonlocal diffusion} 

\begin{abstract}
We study the asymptotic error between the finite element solutions of nonlocal models with a bounded interaction neighborhood and the exact solution of the limiting local model. The limit corresponds to the case when the horizon parameter, the radius of the spherical nonlocal interaction neighborhood of the nonlocal model,  and the mesh size simultaneously approach zero. Two important cases are discussed: one involving the original nonlocal models and the other for nonlocal models with polygonal approximations of the nonlocal interaction neighborhood. Results of numerical experiments are also reported to substantiate the theoretical studies. 
\end{abstract}

\maketitle


\section{Introduction} Nonlocal modeling has become popular in recent years among many applications \cite{andreu2010nonlocal,buades2010image,du2019nonlocal,silling2000reformulation,silling2010peridynamic}. As nonlocal integral operators are the key components of nonlocal modeling, their effective numerical integrations play important roles in both practice applications and numerical analysis. Our study here is devoted to this topic of wide interest. We focus on a model problem described below.

Let $\Omega\subset\mathbb{R}^{d}$ denote a bounded and open polyhedron. For
$u_{\delta}({\bf x}): \Omega\rightarrow \mathbb{R}$, the nonlocal operator
$\mathcal{L}_{\delta}$ on $u_{\delta}({\bf x})$ is defined as
\begin{equation}\label{Nolocal_Operator}
\mathcal{L}_{\delta}u_{\delta}({\bf x})=2\int_{\mathbb{R}^{d}}\left(u_{\delta}({\bf y})-u_{\delta}({\bf x})\right)\gamma_{\delta}({\bf x},{\bf y})d{\bf y}\quad \forall {\bf x}\in\Omega,
\end{equation}
where the nonnegative symmetric mapping $\gamma_{\delta}({\bf x},{\bf y}):
\mathbb{R}^{d}\times\mathbb{R}^{d} \rightarrow \mathbb{R}$ is called a kernel.
The operator $\mathcal{L}_{\delta}$ is regarded nonlocal since the value of
$\mathcal{L}_{\delta}u_{\delta}$ at a point $\bf x$ involves information about $u_{\delta}$
at points ${\bf y}\neq{\bf x}$. In this paper, we consider the following nonlocal Dirichlet volume-constrained diffusion problem
\begin{equation}\label{nonlocal_diffusion}
     \left \{
     \begin{array}{rll}
-\mathcal{L}_{\delta}u_{\delta}({\bf x})&=f_{\delta}({\bf x})& \mbox{on}\: \Omega, \\
u_{\delta}({\bf x})&=g_{\delta}({\bf x}) & \mbox{on} \:\Omdc,
     \end{array}
     \right .
\end{equation}
where $\Omdc=\{{\bf y}\in \mathbb{R}^d\setminus\Omega:\, \mbox{dist}({\bf y},\partial \Omega)<\delta\}$ denotes the interaction domain,
$f_{\delta}$ and $g_{\delta}$ are given functions. For convenience, we denote by $\Omdh=\Omega\cup\Omdc$. Since interactions often occur over finite distances in real-world applications, we only consider kernels having bounded support, i.e., $\gamma_{\delta}({\bf x},{\bf y})\neq 0$ only
if ${\bf y}$ is within a neighborhood of ${\bf x}$.  For this neighborhood, a popular practice is to choose
a spherical domain, that is, a Euclidean ball $B_{\delta}({\bf x})$ centered at ${\bf x}$ with a radius $\delta$, i.e.,
\begin{equation}\label{kernel_finite}
\mbox{for}\:\: {\bf x}\in\Omega:\:
\gamma_{\delta}({\bf x},{\bf y})=0,\: \forall {\bf y}\in\mathbb{R}^d\setminus B_{\delta}({\bf x}).
\end{equation}
Here $\delta$ is known as the horizon parameter or the interaction radius. Volume constraints (VCs) imposed on $\Omdc$
are natural extensions to the nonlocal case of boundary conditions (BCs) for differential equation problems. While the Dirichlet case is given in 
\eqref{nonlocal_diffusion} as an illustration, one can find discussions on other BCs, for example, nonlocal versions of Neumann and Robin BCs  in
\cite{du2012analysis,du2023nonlocal}.  Along with \eqref{kernel_finite},  the kernel is further assumed to be radial, i.e. $\gamma_{\delta}({\bf x},{\bf y})=\widetilde{\gamma}_{\delta}(|{\bf y}-{\bf x}|)$, with the following conditions being satisfied:
\begin{equation}\label{General_kernel}
     \left \{
     \begin{array}{l}
\widetilde{\gamma}_{\delta}(s)>
 0\:\: \mbox{for}\:\: 0<s< \delta, \\
\widetilde{\gamma}_{\delta}(s)\in L^{1}(0,\delta),\\
w_{d}\int_{0}^{\delta}s^{d+1}\widetilde{\gamma}_{\delta}(s)ds=d,
     \end{array}
     \right .
\end{equation}
where $w_{d}$ is the surface area of the unit sphere in $\mathbb{R}^d$. 
The radial symmetry of $\gamma_{\delta}({\bf x},{\bf y})$ matches with the choice of the spherical interaction neighborhood. Note that the first condition of \cref{General_kernel} means that the kernel is strictly positive for ${\bf y}$ inside $B_{\delta}({\bf x})$. 
The second condition is a simplifying assumption, which implies that $\mathcal{L}_{\delta}$ is a bounded operator in $L^2$. The third condition
of \cref{General_kernel} is set to ensure that if the operator $\mathcal{L}_{\delta}$
converges to a limit as $\delta\to 0$, then the limit is given by $\mathcal{L}_{0}=\Delta$,
the classical diffusion operator, see related discussions in
\cite{tian2014asymptotically,tian2020asymptotically}. To be specific, if the corresponding
local problem is defined as follows:
\begin{equation}\label{local_diffusion}
     \left \{
     \begin{array}{rll}
-\mathcal{L}_{0}u_{0}({\bf x})&=f_{0}({\bf x})& \mbox{on}\: \Omega, \\
u_{0}({\bf x})&=g_{0}({\bf x}) & \mbox{on} \:\partial\Omega.
     \end{array}
     \right.
\end{equation}
\begin{figure}[tbhp]
\centering
\vspace{-.75cm}
\captionsetup[subfigure]{captionskip=-18pt}\hspace{-1cm}
\subfloat[\rm regular]{\includegraphics[width=5cm]{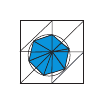}}\hspace{-1cm}
\subfloat[\rm nocaps]{\includegraphics[width=5cm]{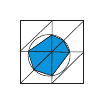}}\hspace{-1cm}
\subfloat[\rm approxcaps]{\includegraphics[width=5cm]{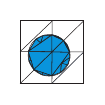}}
\vspace{-.5cm}\caption{Polygonally approximated balls, taken from \cite{du2022convergence}}\label{Fig:Polygon_Approx}
\end{figure}
Then the third condition of \cref{General_kernel} implies that as $\delta\rightarrow 0$, the nonlocal effect diminishes and  if the solutions of nonlocal
equations converge, the limit is given by the solution of a classical local differential equation. Such a limiting property
can serve to connect nonlocal and local models and is of great significance for
practical application, e.g., multiscale modeling and benchmark testing.  It is an important issue  of numerical analysis
to investigate the extent of preserving this limiting property on the discrete level for various discrete approximations. 
Motivated by the findings in
\cite{bobaru2010peridynamic,bobaru2009convergence,chen2011continuous,tian2013analysis}
that the numerical solutions based on the piecewise constant finite element methods (FEMs) fail to converge to a physically consistent local limit if the ratio of $\delta$ and $h$ (mesh size) is fixed, Tian and Du \cite{tian2014asymptotically} proposed 
the concept of asymptotically compatible (AC) schemes for the numerical approximations of a broad class of parametrized problems which, for nonlocal problems under consideration, are convergent discrete approximation schemes of the nonlocal models that preserve the correct local limiting
behavior. In other words, the numerical approximation given by an AC scheme can also reproduce
the correct local limiting solution as the horizon parameter and the mesh size approach
zero free of the relationship between them. They lead to consistent numerical results for benchmark problems without the need for a fine-tuning of model and discretization parameters, $\delta$ and $h$, and thus are regarded as more robust and more suitable for simulating problems involving nonlocal interactions on multiple scales such as in peridynamics \cite{askari2008peridynamics}. In \cite{tian2014asymptotically}, it was revealed that under very general assumptions on the problems, as long as the finite element space contains continuous piecewise linear functions, the Galerkin finite element approximation is always AC. However, the method using piecewise constant finite element is only a conditionally AC scheme under the condition that $\delta=o(h)$ \cite{bobaru2009convergence,chen2011continuous,tian2014asymptotically}. In
\cite{bobaru2009convergence}, one can find similar discussions, using numerical experiments, on the convergence in different  regimes of the parameters 
$\delta$ and $h$.

It is worth noting that most of the existing studies were carried out under the assumption that all the integrals
involved are computed exactly. The latter could be demanding, thus requiring careful consideration in practice \cite{zhang2016quadrature,pasetto2022efficient}. 
One important cause is due to the choice of the nonlocal interaction neighborhood of the operator
$\mathcal{L}_{\delta}$ defined in \cref{Nolocal_Operator} being confined to Euclidean balls. 
For finite element methods based on polyhedral meshes, dealing with intersections of such balls and mesh elements leads to considerable challenges in numerical implementation. As possible alternatives, the integrals may be approximated by truncation of the interaction neighborhoods, or by numerical integration, or a combination of both. For example, D'Elia et. al.  \cite{d2021cookbook} discussed several geometric approximations of the Euclidean balls in 2D (like \emph{regular}, \emph{nocaps} and \emph{approxcaps}
approximations, see \cref{Fig:Polygon_Approx}). They showed how such approximations, in combination
with quadrature rules, affect the discretization error. They also provided numerical
realization which shows that some geometric approximations preserve optimal accuracy in approximating the nonlocal solution under the assumption that the horizon parameter is fixed. However, when the AC property is concerned, the conclusion is different.  It is revealed in \cite{du2022convergence} that all
types of polygonal approximations considered in \cite{d2021cookbook} lose the desirable AC property even when they are used in the context of an AC scheme with exact integration. In other words, the numerical solutions of nonlocal problems with polygonal approximations may fail to converge to the right local limit. On the other hand, the AC property can hold conditionally which means they could be used with certain restrictions.  

The discussion in \cite{du2022convergence} was carried out on the continuum level, focused only on the approximations of the interaction neighborhoods. The conclusions are mostly qualitative.  Thus, quantitative analysis on both continuum and discrete levels remains an interesting issue, which provides the key motivation and the first objective of this study.  In fact, even for the FEM taking Euclidean balls as supports of nonlocal interaction kernels, there have been limited derivations of rigorous error estimates with respect to the horizon parameter and mesh size. Moreover, there has been no theoretical attempt to analyze the convergence of numerical approximations of peridynamic models where the spherical support of the kernel is polygonally approximated. The same observation can also be made in the case of meshfree methods when partial interaction volumes and geometric centers of intersecting regions are not precisely calculated. In contrast, it is worth mentioning that in \cite{du2016asymptotically} for Fourier spectral methods of nonlocal Allen-Cahn equation for 1D in space with periodic BCs,  error estimates with respect to horizon parameter and Fourier parameter have been rigorously derived for smooth local solutions.

As the second objective of this work, we study the convergence property of nonlocal solutions and their finite element approximations in both energy and $L^{2}$ norms. Recall the analog for the local PDE setting: the error in the energy norm is first derived, followed by the Aubin-Nitsche technique, which leads to the error estimate in the $L^{2}$ norm. Unfortunately, due to the possible lack of elliptic smoothing property,  the $L^{2}$ error estimate remained in the same order as the estimate in the nonlocal energy norm if the horizon parameter is fixed.  A natural question is how the Aubin-Nitsche theory behaves if the dependence of the horizon parameter is considered. 

In this work, the error estimates are derived using the following triangle inequality:
\begin{align}\label{Total_Err_En}
\left\|u_{\sharp}^{h}-\widetilde{u}_{0}\right\|_{\sharp}&\leq
\left\|u_{\sharp}-\widetilde{u}_{0}\right\|_{\sharp}+
\left\|u_{\sharp}-u_{\sharp}^{h}\right\|_{\sharp}:=E_{1}+E_{2},
\\ \label{Total_Err_L2}
\left\|u_{\sharp}^{h}-u_{0}\right\|_{L^{2}(\Omega)}&\leq
\left\|u_{\sharp}-u_{0}\right\|_{L^{2}(\Omega)}+
\left\|u_{\sharp}-u_{\sharp}^{h}\right\|_{L^{2}(\Omega)}.
\end{align}
Here, $\widetilde{u}_{0}$ is the $C^4$ extension of $u_0$ as defined in \cref{def:c4extension}, see also \cite{du2019uniform,yanguniform}.
Here the symbol $\sharp$ is used to identify the solution or the energy norm associated with one of the following two cases
\begin{equation}\label{two_kers}
\left\{\sharp=\delta; \: \: \sharp=(\delta,n_{\delta}) \right\}.
\end{equation}
The two cases identified by $\sharp$ correspond respectively to nonlocal solutions and the energy norms associated with interaction neighborhoods given by Euclidean balls and non-symmetric polygons with $n_{\delta}$ being the maximum number of polygons' sides. 

Concerning the energy norm $\left\|\cdot\right\|_{\sharp}$, the first part $E_{1}$ has the estimate
\begin{equation*}
E_{1}=\left \{
     \begin{array}{rl}
\|u_{\delta}-\widetilde{u}_{0}\|_{\delta}&\lesssim\delta^{(3+\mu)/2}, \\
\|u_{\delta,n_{\delta}}-\widetilde{u}_{0}\|_{\delta,n_{\delta}}&\lesssim
\delta^{(3+\mu)/2}+\delta^{-1}n_{\delta}^{-\lambda},
     \end{array}
     \right.
\end{equation*}
for $\lambda=2$ or $4$ with the value depending on the kernel, see \cref{lambda}. In this part, we also derive the estimate for
another interesting case, $\sharp=(\delta|n_{\delta})$, with the interaction neighborhoods given by symmetric polygons and the estimate of the form
\begin{equation*}
\|u_{\delta|n_{\delta}}-\widetilde{u}_{0}\|_{\delta}\lesssim
\delta^{(3+\mu)/2}+n_{\delta}^{-\lambda}.
\end{equation*}
One may observe that the error of $u_{\delta,n_{\delta}}$ against the local solution is one order lower than that of
$u_{\delta|n_{\delta}}$ with respect to $\delta$. To offer an explanation, we notice that for sufficiently smooth $\widetilde{u}_{0}$,
\begin{equation*}
\mathcal{L}_{\delta,n_{\delta}}\widetilde{u}_{0}({\bf x})=
F({\bf x})
+\sum_{i=1}^{2}\sigma_{n_{\delta}}^{ii,2}({\bf x})\partial_{ii}\widetilde{u}_{0}({\bf x})
+O\left(\delta^2\right),
\end{equation*}
where, due to the non-symmetric nature of the effective interaction domain, $F({\bf x})$ contains the odd moments up to the third-order and the second-order mixed moment of kernel $\gamma_{\delta,n_{\delta}}$
while the dominant term is given by the first-order moment. In contrast,
$\mathcal{L}_{\delta|n_{\delta}}\widetilde{u}_{0}({\bf x})$
does not contain any term like $F({\bf x})$, since the support of its kernel is symmetric 
with respect to ${\bf x}$. In fact, we have
$\left|F({\bf x})\right|
\lesssim\delta^{-1}n_{\delta}^{-\lambda}$,
which contributes to the extra error term of the error estimate in the non-symmetric case.

The second part $E_{2}$ is derived using the Cea's lemma in the nonlocal setting and 
the triangle inequality (see \cref{udstar-h} for $\sharp=\delta$, and \cref{udnstar-h} for $\sharp=\delta,n_{\delta}$) as
\begin{align*}
E_{2} &=\left\|u_{\sharp}-u_{\sharp}^{h}\right\|_{\sharp}
\leq\left\|u_{\sharp}-u_{\sharp}^{*}\right\|_{\sharp}+
\left\|u_{\sharp}^{*}-u_{\sharp}^{h}\right\|_{\sharp}
 \leq  
\left\|u_{\sharp}-u_{\sharp}^{*}\right\|_{\sharp}+
\left\|u_{\sharp}^{*}-I_{h} \widetilde{u}_{0} \right\|_{\sharp}\\
& \leq \left\|u_{\sharp}-u_{\sharp}^{*}\right\|_{\sharp}+
\left\|u_{\sharp}^{*}- u_{\sharp}\right\|_{\sharp}+
\left\| u_{\sharp} - \widetilde{u}_{0}\right\|_{\sharp}+
\left\| \widetilde{u}_{0} -
I_{h} \widetilde{u}_{0} \right\|_{\sharp}\\
& =  2\left\|u_{\sharp}-u_{\sharp}^{*}\right\|_{\sharp}+
\left\| \widetilde{u}_{0} -
I_{h} \widetilde{u}_{0} \right\|_{\sharp}  +
\left\| u_{\sharp} - \widetilde{u}_{0}\right\|_{\sharp}\\
&\leq C\delta^{-1/2}h^{2}+C\delta^{-1}\left\|\widetilde{u}_{0}
-I_{h}\widetilde{u}_{0}\right\|_{C\left(\Omdh\right)}+\left\|u_{\sharp}-\widetilde{u}_{0}\right\|_{\sharp}\\
&\leq C\delta^{-1/2}h^{2}+C\delta^{-1}h^{2}\left\|\widetilde{u}_{0}\right\|_{C^{2}\left(\Omdh\right)}
+\left\|u_{\sharp}-\widetilde{u}_{0}\right\|_{\sharp}\\
&\lesssim \delta^{-1}h^{2}+\left\|u_{\sharp}-\widetilde{u}_{0}\right\|_{\sharp}.
\end{align*}
Thus we obtain the total error estimate
\begin{equation*}
\left \{
     \begin{array}{rl}
\|u_{\delta}^{h}-\widetilde{u}_{0}\|_{\delta}&\lesssim\delta^{(3+\mu)/2}+\delta^{-1}h^{2}, \\
\|u_{\delta,n_{\delta}}^{h}-\widetilde{u}_{0}\|_{\delta,n_{\delta}}&\lesssim
\delta^{(3+\mu)/2}+\delta^{-1}h^{2}+\delta^{-\lambda-1}h^{\lambda},
     \end{array}
     \right.
\end{equation*}
under assumptions on the regularity of the local solution,
but not on that of the nonlocal solution, see \cref{Thm:Nonlocal-Fem-stand,Thm:Nonlocal-Fem-sym}.
A key contribution in this regard is the derivation of explicit error estimates with respect to the horizon parameter and the mesh size for finite element solutions of nonlocal problems under mild assumptions.
The sharpness of the error estimate in the energy norm is verified in \cref{sec:Numer_exp}. This further offers confidence in using the estimates to guide numerical simulations of problems like the numerical simulations of peridynamics. 

While the error estimate in the $L^{2}$ norm can be derived, via the Poincar\'{e}'s inequality, from the error in the energy norm, such a result is generally not sharp as shown in \cref{sec:Numer_exp}. Naturally, if a nonlocal analog of Aubin-Nitsche theory for FEM of the local problem (Aubin \cite{aubin1967behavior} and Nitsche \cite{nitsche1968kriterium}) could be developed, then improved estimates of the error in the $L^{2}$ norm  would be feasible, that is, an estimate of the form
\begin{equation*}
     \left \{
     \begin{array}{rl}
\|u_{\delta}^{h}-u_{0}\|_{L^{2}(\Omega)}&\lesssim \delta^{2}+h^{2}, \\
\|u_{\delta,n_{\delta}}^{h}-u_{0}\|_{L^{2}(\Omega)}&\lesssim 
\delta^{2}+h^{2}+\delta^{-\lambda}h^{\lambda},
     \end{array}
     \right .
\end{equation*}
might be expected. While the theoretical analysis is missing at the moment, it is verified numerically in \cref{sec:Numer_exp}. 
Based on the reported error estimates there, one can directly assess the convergence of the computed nonlocal solutions to the correct local limit for different choices of $\delta$ and $h$. 

The rest of the paper is organized as follows. In \cref{sec:Conv_d}, error estimates of
the nonlocal solutions with different kernels against the local counterpart ($E_{1}$) 
are derived in terms of horizon parameter and, for the second case of \cref{two_kers}, the maximum number of sides of polygons.
Next, we study the error between the nonlocal solutions and their finite element
approximations ($E_{2}$) in terms of horizon parameter and mesh size, and for the second case of \cref{two_kers}, the maximum number of polygons's sides in
\cref{sec:Conv_h}. In \cref{sec:ACtheory}, the error estimates in \cref{sec:Conv_d} are combined with that in \cref{sec:Conv_h} to obtain bounds of the error between
the nonlocal discrete solutions and the local exact solution. 
Results of numerical experiments are reported in \cref{sec:Numer_exp} to substantiate the theoretical studies. Finally, we give some concluding remarks in \cref{sec:concl}.

\section{Convergence of the nonlocal solutions to the local limit}\label{sec:Conv_d}
As in \cite{du2012analysis}, the nonlocal energy inner product, nonlocal energy norm, nonlocal energy space, and nonlocal constrained energy subspaces are defined by 
\begin{align*}
(u,v)_{\delta}:= & \int_{\Omdh}\int_{\Omdh}\left(u\left({\bf y}\right)-u\left({\bf x}\right)\right)\left(v({\bf y})-v({\bf x})\right)\gamma_{\delta}({\bf x},{\bf y})d{\bf y}d{\bf x},\:\:\|u\|_{\delta} :=  (u,u)_{\delta}^{1/2}, \\
V(\Omdh)    := & \left\{u\in L^2(\Omdh): u({\bf x})=g_{\delta}({\bf x})\:\mbox{on} \:\Omdc, \:\|u\|_{\delta}<\infty\right\},\\
V^{0}(\Omdh):= & \left\{u\in V(\Omdh): u({\bf x})=0\:\mbox{on} \:\Omdc\right\},\\
V^{c}(\Omdh):= & \left\{u\in V(\Omdh): u({\bf x})=0\:\mbox{on} \:\Omega\right\},
\end{align*}
respectively. For a bounded domain $D\subset\mathbb{R}^{d}$, the space of bounded and continuous functions is denoted by $C_{b}(D) := \left\{u\in C(D): u\: \mbox{is bounded on}\: \overline{D}\right\}$.
For any non-negative integer $n$, we define
\begin{equation*}
C_{b}^{n}(D) := \left\{u\in C_{b}(D): \forall\: \mbox{non-negative integer}\:
j\leq n, u^{(j)}\in C_{b}(D)\right\}.
\end{equation*}

\begin{definition}\label{def:c4extension}
For any function $u$ defined on $\Omega$, we let $\widetilde{u}$ be a $C^{n}$ extension of $u$ such that $\widetilde{u}\in C^{n}_{b}(\Omdh)$ and $\widetilde{u}|_{\Omega}=u$, see also Definition 4.1 in \cite{yanguniform}.
\end{definition}

\subsection{Convergence of the nonlocal solutions to the local limit}
In this subsection, {we investigate} for what kind of VCs,  the nonlocal solutions of
\cref{nonlocal_diffusion} converge to the local
solution of \cref{local_diffusion}, and with what asymptotic rate with respect to $\delta$.

\begin{theorem}\label{Thm:Nonlocal-local-stand}
Suppose $u_{0}\in C^{4}_{b}(\Omega)$ is the solution of the
local problem \eqref{local_diffusion}, the family of kernels $\{\gamma_{\delta}\}$
satisfies \cref{kernel_finite}, \cref{General_kernel}, and
\begin{equation}\label{K_delta_est}
G(\gamma_{\delta}):=\sup_{{\bf x}\in\Omdh}\int_{B_{\delta}({\bf x})\cap\Omdh}\gamma_{\delta}
({\bf x},{\bf y})d{\bf y}\lesssim \delta^{-2}.
\end{equation}
Let $u_\delta$ be the solution of the nonlocal problem \cref{nonlocal_diffusion}. If $\widetilde{u}_{0}$ is a $C^{4}$ extension of $u_{0}$ and
\begin{equation}\label{bc-rhs-est}
\left\|f_{\delta}-f_{0}\right\|_{C(\Omega)}=O\left(\delta^{2}\right),
\quad \left\|g_{\delta}-\widetilde{u}_{0}\right\|_{C(\Omdc)}=O\left(\delta^{2+\mu}\right),\:\mu=0,1,
\end{equation}
then it holds that
\begin{equation}\label{Conv_Stand}
\left\|u_{\delta}-u_{0}\right\|_{C(\Omega)}=O\left(\delta^{2}\right),\quad
\left\|u_\delta-\widetilde{u}_{0}\right\|_{\delta}=O\left(\delta^{(3+\mu)/2}\right).
\end{equation}
\end{theorem}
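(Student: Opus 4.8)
The plan is to combine a pointwise consistency (truncation) estimate for the nonlocal operator acting on the smooth extension $\widetilde{u}_0$ with two stability mechanisms: a nonlocal comparison principle for the $C(\Omega)$ bound, and a variational/energy argument for the $\|\cdot\|_{\delta}$ bound. First I would establish the key consistency estimate
\[
\mathcal{L}_{\delta}\widetilde{u}_{0}({\bf x})=\Delta\widetilde{u}_{0}({\bf x})+O(\delta^{2}),\qquad {\bf x}\in\Omega,
\]
uniformly in ${\bf x}$. Since $\widetilde{u}_{0}\in C^{4}_{b}(\Omdh)$ and $B_{\delta}({\bf x})\subset\Omdh$ for ${\bf x}\in\Omega$, I would Taylor-expand $\widetilde{u}_{0}({\bf y})$ about ${\bf x}$ to fourth order inside the integral defining $\mathcal{L}_{\delta}$. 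By the radial symmetry of $\gamma_{\delta}$ over the ball all odd-order moments vanish; the normalization $w_{d}\int_{0}^{\delta}s^{d+1}\widetilde{\gamma}_{\delta}(s)\,ds=d$ turns the second-order term into exactly $\Delta\widetilde{u}_{0}({\bf x})$; and the fourth-order remainder is controlled by $\|\widetilde{u}_{0}\|_{C^{4}}\int_{B_{\delta}}|{\bf z}|^{4}\gamma_{\delta}\,d{\bf z}\lesssim\delta^{2}$, again using the normalization. Because $\Delta\widetilde{u}_{0}=\Delta u_{0}=-f_{0}$ on $\Omega$, this yields $-\mathcal{L}_{\delta}\widetilde{u}_{0}=f_{0}+O(\delta^{2})$. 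Writing $e_{\delta}:=u_{\delta}-\widetilde{u}_{0}$, the residual $r_{\delta}:=-\mathcal{L}_{\delta}e_{\delta}=(f_{\delta}-f_{0})+O(\delta^{2})$ satisfies $\|r_{\delta}\|_{C(\Omega)}\lesssim\delta^{2}$ by \cref{bc-rhs-est}, while $e_{\delta}=g_{\delta}-\widetilde{u}_{0}$ on $\Omdc$ has $\|e_{\delta}\|_{C(\Omdc)}\lesssim\delta^{2+\mu}$.

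For the maximum-norm bound I would invoke the nonlocal comparison principle together with a quadratic barrier. The function $\psi({\bf x})=\frac{1}{2d}(R^{2}-|{\bf x}|^{2})$, with $R$ large enough that $\psi\ge 0$ on $\Omdh$ and $\Omega\subset B_{R}$, satisfies $-\mathcal{L}_{\delta}\psi\equiv 1$ on $\Omega$, because $\mathcal{L}_{\delta}$ reproduces $\Delta$ exactly on quadratics (the higher-order Taylor remainder is absent, and $\mathcal{L}_{\delta}|{\bf x}|^{2}=2d$ by normalization). Comparing $e_{\delta}$ against $\|r_{\delta}\|_{C(\Omega)}\psi+\|e_{\delta}\|_{C(\Omdc)}$ and against its negative, the comparison principle gives $\|e_{\delta}\|_{C(\Omega)}\lesssim\|r_{\delta}\|_{C(\Omega)}+\|e_{\delta}\|_{C(\Omdc)}\lesssim\delta^{2}$, which is the first claimed estimate since $\widetilde{u}_{0}|_{\Omega}=u_{0}$.

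For the energy estimate I would peel off the volume-constraint mismatch. Let $\zeta$ equal $g_{\delta}-\widetilde{u}_{0}$ on $\Omdc$ and $0$ on $\Omega$, so that $w:=e_{\delta}-\zeta\in V^{0}(\Omdh)$. Testing the weak formulation against $w$ and using the consistency identity gives $(e_{\delta},w)_{\delta}=\int_{\Omega}r_{\delta}w\,d{\bf x}$, whence $|(e_{\delta},w)_{\delta}|\lesssim\delta^{2}\|w\|_{L^{2}(\Omega)}\lesssim\delta^{2}\|w\|_{\delta}$ by the $\delta$-uniform nonlocal Poincar\'e inequality on $V^{0}(\Omdh)$. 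Then
\[
\|w\|_{\delta}^{2}=(e_{\delta},w)_{\delta}-(\zeta,w)_{\delta}\le\bigl(C\delta^{2}+\|\zeta\|_{\delta}\bigr)\|w\|_{\delta},
\]
so $\|w\|_{\delta}\lesssim\delta^{2}+\|\zeta\|_{\delta}$. To bound $\|\zeta\|_{\delta}$ I would use $(\zeta({\bf y})-\zeta({\bf x}))^{2}\le 2\zeta({\bf y})^{2}+2\zeta({\bf x})^{2}$ together with the support bound \cref{K_delta_est} and the thin-shell volume $|\Omdc|\lesssim\delta$, obtaining
\[
\|\zeta\|_{\delta}^{2}\lesssim G(\gamma_{\delta})\,\|\zeta\|_{C(\Omdc)}^{2}\,|\Omdc|\lesssim\delta^{-2}\cdot\delta^{2(2+\mu)}\cdot\delta=\delta^{3+2\mu},
\]
i.e. $\|\zeta\|_{\delta}\lesssim\delta^{(3+2\mu)/2}$. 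Combining, $\|e_{\delta}\|_{\delta}\le\|w\|_{\delta}+\|\zeta\|_{\delta}\lesssim\delta^{2}+\delta^{(3+2\mu)/2}\lesssim\delta^{(3+\mu)/2}$ for $\mu=0,1$, which is the second claim.

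The step I expect to be the main obstacle is the sharp treatment of the volume-constraint mismatch $\zeta$ in the energy norm: squeezing the exponent down to exactly $(3+\mu)/2$ forces one to exploit \emph{simultaneously} the $\delta^{-2}$ growth of $G(\gamma_{\delta})$, the $O(\delta)$ width of the interaction collar $\Omdc$, and the sharp $\delta^{2+\mu}$ decay of the data mismatch $g_{\delta}-\widetilde{u}_{0}$; it is this interplay, rather than the routine Taylor/consistency expansion or the barrier argument, that fixes the final rate and its dependence on $\mu$. A secondary point requiring care is that both stability estimates rest on the nonlocal comparison principle and the $\delta$-uniform Poincar\'e inequality, whose constants must be shown independent of $\delta$, so I would take care to use the uniform versions valid for the whole kernel family satisfying \cref{General_kernel}.
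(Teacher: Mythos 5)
Your proposal is correct and reaches both estimates of \cref{Conv_Stand} with the right rates. The consistency step (fourth-order Taylor expansion, vanishing odd moments by radiality, the normalization in \cref{General_kernel} producing exactly $\Delta$, and the $O(\delta^2)$ fourth-moment remainder) and the $C(\Omega)$ bound via a quadratic barrier and the nonlocal comparison principle are essentially what the paper does; the paper simply cites the nonlocal maximum principle from the literature where you spell out the barrier $\psi$. Where you genuinely diverge is the energy estimate. The paper applies the nonlocal Green's identity to $\|e_\delta\|_\delta^2$ directly and splits the resulting integral over $\Omega$ and $\Omdc$: the interior contribution is bounded by $\|e_\delta\|_{C(\Omega)}\cdot\|\mathcal{L}_\delta e_\delta\|_{C(\Omega)}\lesssim\delta^4$, so the already-proved maximum-norm estimate is an essential input, and the collar contribution is bounded by $|\Omdc|\cdot\|g_\delta-\widetilde u_0\|_{C(\Omdc)}\cdot\big(\|e_\delta\|_{C(\Omega)}+\|g_\delta-\widetilde u_0\|_{C(\Omdc)}\big)\cdot G(\gamma_\delta)\lesssim\delta^{3+\mu}$. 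You instead lift the volume-constraint mismatch into $\zeta$, work with $w=e_\delta-\zeta\in V^0(\Omdh)$, and control the interior part by the residual and the $\delta$-uniform nonlocal Poincar\'e inequality (the paper's \cref{Lemma:Norm_Equiv}, which it only invokes later, in \cref{sec:Conv_h}), plus a direct kernel-mass bound $\|\zeta\|_\delta^2\lesssim G(\gamma_\delta)\,\|\zeta\|_{C(\Omdc)}^2\,|\Omdc|$. Your route makes the energy estimate independent of the maximum-norm estimate (only the residual bound and the data decay enter), at the price of importing the coercivity constant's $\delta$-uniformity from \cite{aksoylu2010results,Ponce2004An}; the paper's route avoids Poincar\'e entirely but leans on the $L^\infty$ stability. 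Your collar term comes out as $\delta^{(3+2\mu)/2}$ rather than the paper's $\delta^{(3+\mu)/2}$, but after adding the $\delta^2$ interior contribution both give the stated rate for $\mu=0,1$. Your closing remark correctly identifies the collar term as the rate-determining interplay.
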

\begin{proof}
Since $\widetilde{u}_{0}\in C^{4}_{b}(\Omdh)$, a direct calculation leads to
\begin{align*}
-\mathcal{L}_{\delta}\widetilde{u}_{0}({\bf x})=-\mathcal{L}_{0}u_{0}({\bf x})
+O\left(\delta^2\right)|\widetilde{u}_{0}|_{4,\infty,\Omdh}=f_{0}({\bf x})+O\left(\delta^{2}\right),\: \forall\,{\bf x}\in \Omega.
\end{align*}
Thus, together with \cref{bc-rhs-est} it holds that
\begin{equation}\label{Basic_estimate}
     \left \{
     \begin{array}{ll}
-\mathcal{L}_{\delta}\left(u_\delta-\widetilde{u}_{0}\right)({\bf x})=
O\left(\delta^{2}\right), & {\bf x}\in \Omega,\\
u_\delta({\bf x})-\widetilde{u}_{0}({\bf x})=g_\delta({\bf x})-\widetilde{u}_{0}({\bf x})=O\left(\delta^{2+\mu}\right),& {\bf x}\in \Omdc.
     \end{array}
     \right.
\end{equation}
{The application of nonlocal maximum principle \cite{du2019uniform,tao2017nonlocal,yanguniform,you2020asymptotically} to \cref{Basic_estimate} produces} 
\begin{equation*}
\left\|u_\delta-u_{0}\right\|_{C(\Omega)}=\left\|u_\delta-\widetilde{u}_{0}\right\|_{C(\Omega)}
=O\left(\delta^{2}\right).
\end{equation*}
This, together with \cref{K_delta_est}, \cref{bc-rhs-est}, and
\cref{Basic_estimate}, leads to
\begin{align*}
&\left\|u_\delta-\widetilde{u}_{0}\right\|_{\delta}^{2}=-\int_{\Omega}\left(u_{\delta}({\bf x})
-u_{0}({\bf x})\right)\mathcal{L}_{\delta}\left(u_{\delta}({\bf x})-\widetilde{u}_{0}({\bf x})\right)d{\bf x}\nonumber\\
&-\int_{\Omdc}\left(g_{\delta}({\bf x})-\widetilde{u}_{0}({\bf x})\right)
\int_{\Omdh}\left(u_{\delta}({\bf y})-\widetilde{u}_{0}({\bf y})-g_{\delta}({\bf x})+\widetilde{u}_{0}({\bf x})\right)\gamma_{\delta}({\bf x},{\bf y})d{\bf y}d{\bf x}\nonumber\\
&\lesssim \delta^{4}+ |\Omdc|\cdot\left\|g_{\delta}-\widetilde{u}_{0}\right\|_{C(\Omdc)}
\cdot\left(\left\|u_{\delta}-u_{0}\right\|_{C(\Omega)}+
\left\|g_{\delta}-\widetilde{u}_{0}\right\|_{C(\Omdc)}
\right)\cdot G(\gamma_{\delta})\nonumber\\
&\lesssim \delta^{4}+\delta\cdot\delta^{2+\mu}\cdot \delta^{2}
\cdot\delta^{-2}\approx \delta^{3+\mu}.
\end{align*}
\end{proof}
\begin{remark}
Although the auxiliary solution $\widetilde{u}_{0}$ is required to be $C^{4}_{b}(\Omdh)$, the nonlocal solution itself $u_\delta$ is not subject to this restriction. As we will see in \cref{subsec:Numer:pert}, $u_\delta$ could be discontinuous across $\partial\Omega$.
\end{remark}

It is worth pointing out that,  nonlocal models with the second order convergence rate to the local limit in the $L^{\infty}$ norm have been studied in \cite{du2019uniform,lee2022second,tao2017nonlocal,yanguniform,you2020asymptotically}. 
In \cite{zhang2023second}, the authors considered nonlocal integral relaxations of local differential equations on a manifold, and proved the second-order convergence in the $H^{1}$ semi-norm to the local counterpart, although the nonlocal relaxation used there is different from what we discuss in this paper. 

In the rest of this paper, we take two-dimensional cases for illustration. A similar process of numerical analysis and the corresponding results can be readily extended to higher dimensions.

\subsection{Convergence of the nonlocal solutions with polygonal approximations of the {spherical} interaction neighborhoods}
To make the analysis more concise, we assume that $\widetilde{\gamma}_{\delta}$ has
a re-scaled form, that is,
\begin{equation}\label{rescaled_kernel}
\widetilde{\gamma}_{\delta} (s)
=\frac{1}{\delta^4} \gamma\left(\frac{s}{\delta}\right)
\end{equation}
for some nonnegative function $\gamma$ defined on $(0,1)$. Then by the normalization condition given in \cref{General_kernel} we have
\begin{equation*}
\int_{B_{1}({\bf 0})}\xi_{i}^{2}\gamma\left(|{\boldsymbol \xi}|_{2}\right)d{\boldsymbol \xi}=1,\:\: i=1,2.
\end{equation*}
Denote by
\begin{equation*}
\Phi(t)=\frac{1}{2}\int_{|{\boldsymbol \xi}|_{2}\leq t}|{\boldsymbol \xi}|_{2}^{2}\cdot\gamma(|{\boldsymbol
\xi}|_{2})d{\boldsymbol \xi}=\pi\int_{0}^{t}\rho^{3}\gamma(\rho)d\rho,
\quad
\Psi(t)=4\int_{0}^{t}\rho^{2}\gamma(\rho)d\rho.
\end{equation*}
Thus $\Phi(1)=1$ according to \cref{General_kernel}, and for $t\in(0,1]$,
\begin{equation*}
\Phi'(t)=\pi t^{3}\gamma(t),\:\: \Psi'(t)=4t^{2}\gamma(t).
\end{equation*}

\begin{figure}[tbhp]
\centering
\subfloat[$B_{\delta}({\bf x})$]{
\includegraphics[width=4cm]{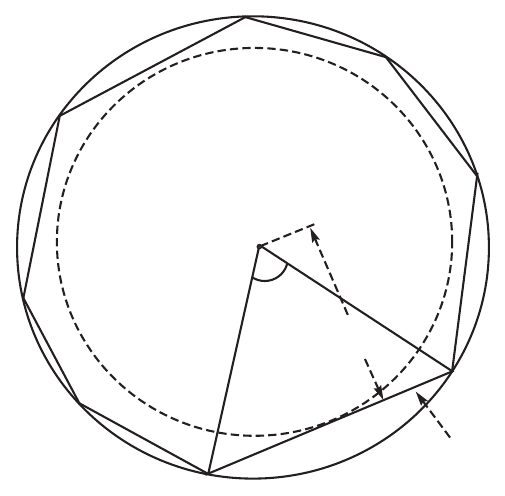}
\put(-60,60){${\bf x}$}\put(-43,35){$r_{\delta,{\bf x}}$}
\put(-20,5){$H_{n_{\delta}^{\bf x}}$}}
\hspace{1.5cm}
\subfloat[$B_{1}({\bf 0})$]{
\includegraphics[width=4.5cm]{Figure/Inscribed_Poly_Numer}
\put(-70,65){$O$}\put(-48,39){$\widehat{r}_{\delta,{\bf x}}$}
\put(-22,4){$\widehat{H}_{n_{\delta}^{\bf x}}$}}
\caption{Inscribed polygon in $B_{\delta}({\bf x})$ and its image by \cref{Affine_map}}\label{Fig:Inscribed}
\end{figure}

For any ${\bf x}\in\Omega$, an inscribed polygon of the circle $B_{\delta}({\bf x})$ is denoted
by $B_{\delta,n_{\delta}^{\bf x}}(\bf x)$, or simply $B_{\delta,n_{\delta}^{\bf x}}$,
where $n_{\delta}^{\bf x}$ denotes its number of sides. Furthermore, by \cref{rescaled_kernel} the rescaled polygon of $B_{\delta,n_{\delta}^{\bf x}}$ is defined as
\begin{equation}\label{Affine_map}
B_{1,n_{\delta}^{\bf x}}({\bf 0})=\left\{{\bf z}=\left({\bf y}-{\bf x}\right)/\delta:{\bf y}\in B_{\delta,n_{\delta}^{\bf x}}\right\}.
\end{equation}
For a given $\delta>0$ and a family of polygons $\{B_{\delta,n_{\delta}^{\bf x}}\}_{\bf x}$, we introduce the notation
\begin{equation*}
n_{\delta}=\sup_{{\bf x}\in\Omega}n_{\delta}^{\bf x},\quad n_{\delta, \inf}
=\inf_{{\bf x}\in\Omega}n_{\delta}^{\bf x}.
\end{equation*}
Denote by $r_{\delta,{\bf x}}$ the radius of the largest circle
(centered on ${\bf x}$) contained in $B_{\delta,n_{\delta}^{\bf x}}$, see (A) of \cref{Fig:Inscribed}, and $r(n_{\delta})=\inf\limits_{{\bf x}\in\Omega}r_{\delta,{\bf x}}$.

Let us define the weakly quasi-uniformity for a family of inscribed polygons which is
weaker than the quasi-uniformity introduced in \cite{du2022convergence}.
\begin{definition}
A family of inscribed polygons $\{B_{\delta,n_{\delta}^{\bf x}}\}$ is called weakly quasi-uniform if there exist two constants $C_{1}$ and $C_{2}>0$ such that $\forall\delta>0$, the following two bounds hold
\begin{equation*}
\sup_{{\bf x}\in\Omega}\frac{H_{n_{\delta}^{\bf x}}}
{2\sin\left(\pi/n_{\delta}^{\bf x}\right)}\leq C_{1}\delta, \quad\text{and}\quad \inf\limits_{{\bf x}\in\Omega}r_{\delta,{\bf x}}\geq C_{2}\delta
\end{equation*}
where $H_{n_{\delta}^{\bf x}}$ stands for
the length of the longest side of $B_{\delta,n_{\delta}^{\bf x}}$, see (A) of \cref{Fig:Inscribed}.
\end{definition}
For a weakly quasi-uniform family of inscribed polygons, there exists a constant
$C_{3}>0$ such that for all $\delta>0$, $n_{\delta}\leq C_{3} n_{\delta,\inf}$ holds. Denoted by 
\begin{equation*}
B'_{\delta,n_{\delta}^{\bf x}}=\{{\bf y}\in B_{\delta}({\bf x}):
{\bf x}\in B_{\delta,n_{\delta}^{\bf y}}\},
\end{equation*}
then the family of kernels 
\begin{equation*}
\gamma_{\delta,n_{\delta}}({\bf x},{\bf y})=\frac12 \gamma_{\delta}({\bf x},{\bf y})
\left(\chi_{B_{\delta,n_{\delta}^{\bf x}}}({\bf y})+\chi_{B'_{\delta,n_{\delta}^{\bf x}}}({\bf y})\right)
\end{equation*}
are symmetric with respect to ${\bf x}$ and ${\bf y}$, but not radial. We then define
a family of nonlocal operators
\begin{equation*}
\mathcal{L}_{\delta,n_{\delta}}u({\bf x})=
2\int_{\mathbb{R}^{2}}\left(u({\bf y})-u({\bf x})\right)
\gamma_{\delta,n_{\delta}}({\bf x},{\bf y})d{\bf y}\quad \forall {\bf x}\in\Omega,
\end{equation*}
and the corresponding family of nonlocal problems 
\begin{equation}\label{nonlocal_approx_sym}
     \left \{
     \begin{array}{rll}
-\mathcal{L}_{\delta,n_{\delta}}u_{\delta,n_{\delta}}({\bf x})&=f_{\delta}({\bf x})& \mbox{on}\: \Omega, \\
u_{\delta,n_{\delta}}({\bf x})&=g_{\delta}({\bf x}) & \mbox{on} \:\Omdc.
     \end{array}
     \right .
\end{equation}
The nonlocal energy inner product associated with $-\mathcal{L}_{\delta,n_{\delta}}$ and the
corresponding norm are defined as follows
\begin{align*}
(u,v)_{\delta,n_{\delta}}&:=\int_{\Omdh}\int_{\Omdh}\left(u({\bf y})-u({\bf x})\right)
\left(v({\bf y})-v({\bf x})\right)\gamma_{\delta,n_{\delta}}({\bf x},{\bf y})d{\bf y}d{\bf x},\\
\left\|u\right\|_{\delta,n_{\delta}}&:=(u,u)_{\delta,n_{\delta}}^{1/2}.
\end{align*}

To simplify the notation, let $\|u\|_{r(n_{\delta})}$ be the same as
$\|\cdot\|_{\delta}$  except for the replacement of $\delta$ by $r(n_{\delta})$.
As derived in \cite{du2022convergence} we have
\begin{equation}\label{equiv_norm_d_nd_s} 
\|u\|_{r(n_{\delta})}
\leq\|u\|_{\delta,n_{\delta}}\leq\|u\|_{\delta}.
\end{equation}

It is worth noting that for a fixed point $\bf x$, 
the support of $\gamma_{\delta,n_{\delta}}({\bf x},{\bf y})$ is 
not necessarily symmetric, so the integral of $({\bf y}-{\bf x})\gamma_{\delta,n_{\delta}}({\bf x},{\bf y})$ does not necessarily vanish. In order to inherit the symmetric-support property of the original kernel $\gamma_{\delta}$,
we define kernels 
\begin{equation*}
\gamma_{\delta|n_{\delta}}({\bf x},{\bf y})=\frac{1}{2}\gamma_{\delta}({\bf x},{\bf y})
\left(\chi_{B_{\delta,n_{\delta}^{\bf x}}}({\bf y})+\chi_{B^{T}_{\delta,n_{\delta}^{\bf x}}}({\bf y})\right),
\end{equation*}
with 
\begin{equation*}
B^{T}_{\delta,n_{\delta}^{\bf x}}=\{{\bf y}\in B_{\delta}({\bf x}):
2{\bf x}-{\bf y}\in B_{\delta,n_{\delta}^{\bf x}}\}.
\end{equation*}
Based on the definition of $\gamma_{\delta|n_{\delta}}$, we define a new family of nonlocal operators
\begin{equation*}
\mathcal{L}_{\delta|n_{\delta}}u({\bf x})=2\int_{\mathbb{R}^{2}}
\left(u({\bf y})-u({\bf x})\right)\gamma_{\delta|n_{\delta}}({\bf x},{\bf y})d{\bf y}\quad
\forall {\bf x}\in\Omega,
\end{equation*}
and the corresponding family of nonlocal problems 
\begin{equation}\label{nonlocal_approx}
     \left \{
     \begin{array}{rll}
-\mathcal{L}_{\delta|n_{\delta}}u_{\delta|n_{\delta}}({\bf x})&=f_{\delta}({\bf x})& \mbox{on}\: \Omega, \\
u_{\delta|n_{\delta}}({\bf x})&=g_{\delta}({\bf x}) & \mbox{on} \:\Omdc,
     \end{array}
     \right .
\end{equation}
where the support of the kernel is a {symmetric polygon for any fixed point {\bf x}}. However, the operator $-\mathcal{L}_{\delta|n_{\delta}}$ is not self-adjoint/symmetric so we could not directly define the corresponding inner product and norm.

We recall \cite{du2022convergence} to show that neither $\mathcal{L}_{\delta,n_{\delta}}\left( |{\bf x} |_{2}^{2}\right)$ nor
$\mathcal{L}_{\delta|n_{\delta}}\left( |{\bf x} |_{2}^{2}\right)$ converges to $\mathcal{L}_{0}\left( |{\bf x} |_{2}^{2}\right)$ if $n_{\delta}$ is uniformly
bounded, as $\delta\rightarrow 0$.
\begin{theorem}\label{Thm:Notconv}\cite[Theorem 3.1]{du2022convergence}
Assume $\left\{ B_{\delta,n_{\delta}^{\bf x}}\right\}$ is a family of polygons which satisfies
$B_{\delta,n_{\delta}^{\bf x}} \subset B_{\delta}({\bf x})$, $\forall \delta$, $\forall {\bf x} \in \Omega$. The family of kernels $\{\gamma_{\delta}\}$ satisfies the conditions \cref{kernel_finite}
and \cref{General_kernel}. If $n_{\delta}$ is uniformly
bounded as $\delta\rightarrow 0$, then both $\mathcal{L}_{\delta,n_{\delta}}\left( |{\bf x} |_{2}^{2}\right)$ and $\mathcal{L}_{\delta|n_{\delta}}\left( |{\bf x} |_{2}^{2}\right)$ do not converge to $\mathcal{L}_{0}\left( |{\bf x} |_{2}^{2}\right)$ for all ${\bf x}\in\Omega$.
\end{theorem}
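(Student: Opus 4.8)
The plan is to evaluate each operator on the quadratic $q({\bf x})=|{\bf x}|_2^2$ and exploit the fact that, after the affine rescaling \cref{Affine_map}, all the resulting integrals become $\delta$-independent moments of $\gamma$ over the rescaled inscribed polygon $B_{1,n_\delta^{\bf x}}({\bf 0})\subset B_1({\bf 0})$. Writing $q({\bf y})-q({\bf x})=|{\bf y}-{\bf x}|_2^2+2{\bf x}\cdot({\bf y}-{\bf x})$ splits every operator into a \emph{second-moment} part and an \emph{odd-moment} part. First I would record the two baselines: $\mathcal{L}_0 q=\Delta q=4$, and, for the full ball, radial symmetry kills the odd part while \cref{General_kernel} (equivalently $\Phi(1)=1$, so $\int_{B_1({\bf 0})}|{\bf z}|_2^2\gamma(|{\bf z}|_2)\,d{\bf z}=2$) gives $\mathcal{L}_\delta q=4$ exactly. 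Thus the whole question is whether truncation to a polygon destroys this identity, and the engine of the proof is a uniform lower bound on the mass that the truncation removes.

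For the symmetric-support operator $\mathcal{L}_{\delta|n_\delta}$: since the weight $\tfrac12\gamma_\delta(\chi_{B_{\delta,n_\delta^{\bf x}}}+\chi_{B^T_{\delta,n_\delta^{\bf x}}})$ is invariant under the reflection ${\bf y}\mapsto 2{\bf x}-{\bf y}$, the odd-moment part vanishes, and a change of variables on the $B^T$ term collapses the second-moment part to an integral over the single polygon, giving (after rescaling)
\[
\mathcal{L}_{\delta|n_\delta}q({\bf x})=2\int_{B_{\delta,n_\delta^{\bf x}}}|{\bf y}-{\bf x}|_2^2\gamma_\delta({\bf x},{\bf y})\,d{\bf y}=2\int_{B_{1,n_\delta^{\bf x}}({\bf 0})}|{\bf z}|_2^2\gamma(|{\bf z}|_2)\,d{\bf z}=:4\Phi_{\mathrm{poly}}^{\bf x}.
\]
The deficit is $4(1-\Phi_{\mathrm{poly}}^{\bf x})=2\int_{B_1({\bf 0})\setminus B_{1,n_\delta^{\bf x}}({\bf 0})}|{\bf z}|_2^2\gamma(|{\bf z}|_2)\,d{\bf z}$, an integral over the circular segments cut off by the polygon's edges. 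Here I would invoke a pigeonhole step: a convex polygon inscribed in $B_1({\bf 0})$ with at most $n_\delta\le N$ sides has a central angle $\ge 2\pi/N$, hence one cut-off segment of half-angle $\ge\pi/N$. Because $\gamma>0$ a.e.\ on $(0,1)$ by \cref{General_kernel}, the measure $|{\bf z}|_2^2\gamma(|{\bf z}|_2)\,d{\bf z}$ is positive on any set of positive Lebesgue measure, and by rotational invariance the mass of such a segment depends only on its half-angle; so it is bounded below by a constant $c_N:=\tfrac12 g(\pi/N)>0$ depending only on $N$ and $\gamma$, where $g$ denotes the (increasing) segment mass function. Consequently $\mathcal{L}_0 q-\mathcal{L}_{\delta|n_\delta}q\ge 4c_N>0$ for every ${\bf x}$ and every $\delta$, which rules out convergence.

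For the non-radial operator $\mathcal{L}_{\delta,n_\delta}$ I would use the same split. The second-moment part equals $\int_{B_{\delta,n_\delta^{\bf x}}}|{\bf y}-{\bf x}|_2^2\gamma_\delta+\int_{B'_{\delta,n_\delta^{\bf x}}}|{\bf y}-{\bf x}|_2^2\gamma_\delta$; since $B'_{\delta,n_\delta^{\bf x}}\subset B_\delta({\bf x})$ contributes at most the full-ball value $2$ while the polygon $B_{\delta,n_\delta^{\bf x}}$ again loses at least the segment mass above, this part is at most $4-2c_N<4$. The odd-moment part is $4\,{\bf x}\cdot\int({\bf y}-{\bf x})\gamma_{\delta,n_\delta}\,d{\bf y}$, and rescaling shows the integral equals $\delta^{-1}$ times the fixed first moment $\tfrac12\int{\bf z}\,\gamma(|{\bf z}|_2)(\chi_{\widehat B}+\chi_{\widehat B'})\,d{\bf z}$ of the (generally non-centrally-symmetric) rescaled support; when this moment does not vanish the odd part blows up like $\delta^{-1}$, so $\mathcal{L}_{\delta,n_\delta}q$ diverges, and where it does vanish one falls back on the $4-2c_N$ bound of the second-moment part. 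In either regime the limit cannot be $4$.

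The main obstacle is the uniform control just described. In the symmetric case it is the claim that the cut-off segments carry a definite fraction of the second moment \emph{uniformly over all inscribed polygons with at most $N$ sides and all $\delta$}; this is exactly where boundedness of $n_\delta$ (through the pigeonhole angle $2\pi/N$) and strict positivity of $\gamma$ on $(0,1)$ enter, and reducing the segment mass to a monotone function of its half-angle via rotational invariance is the crucial simplification. In the non-radial case the extra difficulty is that the set $B'_{\delta,n_\delta^{\bf x}}$ is not an explicit polygon and the surviving odd moment must be shown not to conspire with the second-moment deficit to restore the value $4$; the $\delta^{-1}$ scaling of the odd term (matching the $\delta^{-1}n_\delta^{-\lambda}$ heuristic of the introduction) is what makes this possible.
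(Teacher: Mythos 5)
This theorem is imported verbatim from \cite[Theorem~3.1]{du2022convergence}; the paper you are reading gives no proof of it, so the only in-paper object to compare against is \cref{Thm:Nonlocal-local-poly-lower}, which the authors present as the quantitative version of the non-convergence. Its proof is exactly your device for the symmetric operator: pigeonhole the $n_{\delta}^{\bf x}\le N$ vertices to find a side of central angle at least $2\pi/N$, observe that the adjoining cap $C_{n_{\delta}^{\bf x}}$ lies in $B_{1}({\bf 0})\setminus B_{1,n_{\delta}^{\bf x}}({\bf 0})$, and bound its second moment below using the strict positivity of $\gamma$ on $(0,1)$. Your treatment of $\mathcal{L}_{\delta|n_{\delta}}$ is correct and in the same spirit: the reflection symmetry of $\tfrac12(\chi_{B_{\delta,n_{\delta}^{\bf x}}}+\chi_{B^{T}_{\delta,n_{\delta}^{\bf x}}})$ kills the odd part, the normalization $\Phi(1)=1$ gives $\mathcal{L}_{\delta}q=4=\mathcal{L}_{0}q$, and the $\delta$-independent cap deficit $g(\pi/N)>0$ excludes convergence at every ${\bf x}$. (The paper's lemma works with $\xi_{i}^{2}$ rather than $|{\boldsymbol \xi}|_{2}^{2}$ and therefore cannot invoke rotational invariance; it instead bounds the worst orientation explicitly. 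Your use of the full second moment is a legitimate simplification for the present purpose.)

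The genuine gap is in the non-symmetric operator $\mathcal{L}_{\delta,n_{\delta}}$. Your dichotomy ``the rescaled first moment vanishes / does not vanish'' is not exhaustive in the sense needed: the first moment ${\boldsymbol m}_{\delta}({\bf x})=\tfrac12\int {\bf z}\,\gamma(|{\bf z}|_{2})(\chi_{\widehat B}+\chi_{\widehat B'})\,d{\bf z}$ is \emph{not} a fixed vector --- the polygons may change shape and orientation as $\delta\to 0$ --- and nothing in the hypotheses prevents it from being nonzero yet of size exactly $O(\delta)$. In that intermediate regime the odd contribution $4\,{\bf x}\cdot{\boldsymbol m}_{\delta}({\bf x})/\delta$ is $O(1)$, of either sign, and your argument only bounds the even contribution from \emph{above} by $4-2c_{N}$; you never exclude the possibility that the two add up to $4+o(1)$. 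You name this ``conspiracy'' yourself but then assert ``in either regime the limit cannot be $4$,'' which is precisely the unproved case. Closing it requires an additional argument --- for instance a lower bound on the first moment of the missing caps analogous to \cref{Thm:Nonlocal-local-poly-lower} (which the paper claims ``could be provided in the same manner'' but also omits), combined with a reason why the resulting $O(\delta^{-1})$ odd term and the $O(1)$ even deficit cannot balance for all ${\bf x}\in\Omega$ simultaneously; the latter is delicate because both quantities depend on ${\bf x}$ through the local polygon. As written, the half of the theorem concerning $\mathcal{L}_{\delta,n_{\delta}}$ is not established.
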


Next, we want to establish the error estimate between $u_{\delta|n_{\delta}}$ and $\widetilde{u}_{0}$. To do that, we need a theorem of the maximum-principle type, which is important for the derivation of the estimate of $u_{\delta|n_{\delta}}-\widetilde{u}_{0}$,
once the estimate for $-\mathcal{L}_{\delta|n_{\delta}}\left(u_{\delta|n_{\delta}}-\widetilde{u}_{0}\right)$ is available. 

\begin{theorem}\label{Thm:Max_Princ}
Assume that $\gamma_{\delta}$ satisfies \cref{kernel_finite}
and the first two of \cref{General_kernel}.
If $f_{\delta}({\bf x})\leq 0$, for all ${\bf x}\in\Omega$, then
\begin{equation}\label{Max_Princ_Basic}
\sup_{{\bf x}\in\Omega}u_{\delta|n_{\delta}}({\bf x})\leq \sup_{{\bf x}\in\Omdc}g_{\delta}({\bf x}).
\end{equation}
\end{theorem}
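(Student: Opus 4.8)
The plan is to run a nonlocal maximum-principle argument of contradiction type, parallel to the one invoked for $\mathcal{L}_{\delta}$ in the proof of \cref{Thm:Nonlocal-local-stand}, but now tailored to the non-radial, symmetric-support kernel $\gamma_{\delta|n_{\delta}}$. First I would recast the governing relation in pointwise form: for every ${\bf x}\in\Omega$,
\begin{equation*}
\mathcal{L}_{\delta|n_{\delta}}u_{\delta|n_{\delta}}({\bf x})
=2\int_{\Omdh}\left(u_{\delta|n_{\delta}}({\bf y})-u_{\delta|n_{\delta}}({\bf x})\right)\gamma_{\delta|n_{\delta}}({\bf x},{\bf y})\,d{\bf y}=-f_{\delta}({\bf x})\ge 0,
\end{equation*}
using that the support of $\gamma_{\delta|n_{\delta}}({\bf x},\cdot)$ lies in $B_{\delta}({\bf x})\subset\Omdh$, so that the values of $u_{\delta|n_{\delta}}({\bf y})$ entering the integral are governed by $u_{\delta|n_{\delta}}$ on $\Omega$ and by $g_{\delta}$ on $\Omdc$. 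It is convenient to record the equivalent averaging inequality $C({\bf x})\,u_{\delta|n_{\delta}}({\bf x})\le 2\int_{\Omdh}u_{\delta|n_{\delta}}({\bf y})\gamma_{\delta|n_{\delta}}({\bf x},{\bf y})\,d{\bf y}$, where $C({\bf x}):=2\int_{\Omdh}\gamma_{\delta|n_{\delta}}({\bf x},{\bf y})\,d{\bf y}$ is finite by the second of \cref{General_kernel} and strictly positive by the first of \cref{General_kernel} together with the positive measure of the symmetric polygonal support.

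Next I would argue by contradiction. Set $M=\sup_{\Omega}u_{\delta|n_{\delta}}$ and $M_{c}=\sup_{\Omdc}g_{\delta}$, and suppose $M>M_{c}$. Pick a point ${\bf x}_{0}\in\Omega$ at (or arbitrarily near) which the supremum is attained. At ${\bf x}_{0}$ every increment in the integrand is nonpositive: for ${\bf y}\in\Omega$ one has $u_{\delta|n_{\delta}}({\bf y})-u_{\delta|n_{\delta}}({\bf x}_{0})\le M-M=0$, while for ${\bf y}\in\Omdc$ one has $g_{\delta}({\bf y})-u_{\delta|n_{\delta}}({\bf x}_{0})\le M_{c}-M<0$. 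Since $\gamma_{\delta|n_{\delta}}\ge 0$, the inequality $\mathcal{L}_{\delta|n_{\delta}}u_{\delta|n_{\delta}}({\bf x}_{0})\ge 0$ can only hold if the nonpositive integrand vanishes $\gamma_{\delta|n_{\delta}}({\bf x}_{0},\cdot)$-almost everywhere. Because $\widetilde{\gamma}_{\delta}(s)>0$ for $0<s<\delta$, the kernel $\gamma_{\delta|n_{\delta}}({\bf x}_{0},\cdot)$ is strictly positive throughout its polygonal support, which (when ${\bf x}_{0}$ lies in the interior of its polygon) surrounds ${\bf x}_{0}$ and contains the disk $B_{r_{\delta,{\bf x}_{0}}}({\bf x}_{0})$; hence $u_{\delta|n_{\delta}}\equiv M$ on a full neighborhood of ${\bf x}_{0}$ inside $\Omega$.

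I would then propagate this equality. The above shows that $\{{\bf x}\in\Omega:u_{\delta|n_{\delta}}({\bf x})=M\}$ is relatively open in $\Omega$; combined with continuity of $u_{\delta|n_{\delta}}$ on the interior (the fixed-point identity $u_{\delta|n_{\delta}}=(f_{\delta}+2\int u_{\delta|n_{\delta}}\gamma_{\delta|n_{\delta}})/C$ transmits continuity of the data to the interior, the only possible jump being across $\partial\Omega$ as in the Remark following \cref{Thm:Nonlocal-local-stand}) and the connectedness of $\Omega$, it follows that $u_{\delta|n_{\delta}}\equiv M$ throughout $\Omega$. Applying the vanishing-integrand condition again at a point ${\bf x}_{0}\in\Omega$ close enough to $\partial\Omega$ that its neighborhood meets $\Omdc$ in positive measure, and using $u_{\delta|n_{\delta}}\equiv M$ on $\Omega$, forces $g_{\delta}\equiv M$ on a positive-measure subset of $\Omdc$, contradicting $M>M_{c}=\sup_{\Omdc}g_{\delta}$. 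Therefore $M\le M_{c}$, which is exactly \cref{Max_Princ_Basic}.

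The main obstacle I anticipate is the lack of global continuity: since $u_{\delta|n_{\delta}}$ may jump across $\partial\Omega$, both the attainment of $\sup_{\Omega}u_{\delta|n_{\delta}}$ and the open–closed dichotomy used in the propagation step need justification. I would address this either by establishing interior continuity directly from the averaging identity (so the supremum is attained on compact subsets of $\Omega$ and the propagation is classical), or, to avoid any regularity hypothesis, by recasting the argument with essential suprema and a measure-theoretic spreading estimate: at a near-maximizer the probability measure $2\gamma_{\delta|n_{\delta}}({\bf x}_{0},\cdot)/C({\bf x}_{0})$ concentrates on $\{u_{\delta|n_{\delta}}\approx M\}$, and iterating this concentration along chains of overlapping symmetric supports drives the near-maximal set to $\Omdc$. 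The geometric input that each support surrounds its center, so that consecutive neighborhoods overlap and cover the connected $\Omega$ and reach into $\Omdc$, is the load-bearing fact; verifying it for the symmetric polygonal supports under only the first two conditions of \cref{General_kernel} (which do not preclude a degenerate inscribed polygon with $r_{\delta,{\bf x}}=0$) is where the real care is needed.
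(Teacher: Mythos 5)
Your overall strategy is genuinely different from the paper's, and it has a gap that is not closed. You run a strong-maximum-principle propagation: locate a maximizer, deduce from $\mathcal{L}_{\delta|n_{\delta}}u_{\delta|n_{\delta}}\geq 0$ that the integrand vanishes on the support of the kernel, conclude $u_{\delta|n_{\delta}}\equiv M$ on a neighborhood, and then spread this over the connected set $\Omega$ until it touches $\Omdc$. Every step of this chain rests on either (i) the supremum being attained and the set $\{u_{\delta|n_{\delta}}=M\}$ being closed in $\Omega$, which requires continuity of $u_{\delta|n_{\delta}}$ on $\Omega$, or (ii) the measure-theoretic iteration you sketch in the last paragraph. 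Neither is established. Your proposed route to interior continuity via the averaging identity $u=(f_{\delta}+2\int u\,\gamma_{\delta|n_{\delta}})/C$ does not go through here, because $\gamma_{\delta|n_{\delta}}({\bf x},\cdot)$ depends on ${\bf x}$ through the inscribed polygon $B_{\delta,n_{\delta}^{\bf x}}$, and the family $\{B_{\delta,n_{\delta}^{\bf x}}\}$ is only assumed inscribed (and, elsewhere, weakly quasi-uniform); nothing forces ${\bf x}\mapsto\chi_{B_{\delta,n_{\delta}^{\bf x}}}$ to vary continuously (even $n_{\delta}^{\bf x}$ may jump), so the right-hand side of the averaging identity need not be continuous in ${\bf x}$. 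The Chebyshev/chaining alternative can in principle be made to work for fixed $\delta$, but it degrades the near-maximality level at each step of the chain and you do not carry out the bookkeeping; as written, the proof is incomplete exactly at the point you yourself flag as "where the real care is needed."

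The paper avoids all of this with a two-step barrier argument that needs neither attainment of the supremum, nor continuity, nor connectedness. Step one: if $f_{\delta}\leq-\eta<0$, take any ${\bf x}_{0}$ with $u_{\delta|n_{\delta}}({\bf x}_{0})>M-\eta/(4G(\gamma_{\delta}))$ (which exists by definition of the supremum); if the conclusion failed one would have $u_{\delta|n_{\delta}}\leq M$ on all of $\Omdh$, hence $f_{\delta}({\bf x}_{0})=2\int\bigl(u_{\delta|n_{\delta}}({\bf x}_{0})-u_{\delta|n_{\delta}}({\bf y})\bigr)\gamma_{\delta|n_{\delta}}({\bf x}_{0},{\bf y})\,d{\bf y}>-\eta/2$, an immediate contradiction with no propagation. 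Step two: for $f_{\delta}\leq 0$, perturb by $\varepsilon q$ with $q({\bf x})=|{\bf x}-{\bf x}^{*}|_{2}^{2}$; the symmetry of the support $B_{\delta,n_{\delta}^{\bf x}}\cup B^{T}_{\delta,n_{\delta}^{\bf x}}$ kills the first moment, giving $\mathcal{L}_{\delta|n_{\delta}}q\geq\eta^{*}=4\Phi(\hat{r})>0$, so $u_{\delta|n_{\delta}}+\varepsilon q$ falls under step one; then send $\varepsilon\to 0$. This is the key missing idea in your proposal: reducing $f_{\delta}\leq 0$ to $f_{\delta}<0$ via a quadratic barrier replaces the entire open--closed spreading argument. (Your closing observation that a degenerate polygon with $r_{\delta,{\bf x}}=0$ would cause trouble is fair --- the paper's $\eta^{*}>0$ also implicitly requires $\hat{r}>0$ --- but that is a shared hypothesis issue, not a repair of the propagation gap.)
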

\begin{proof}
Set $M=\sup\limits_{{\bf x}\in\Omega}u_{\delta|n_{\delta}}({\bf x})$.
We begin with the following case:
\begin{equation}\label{firstcase1}
f_{\delta}({\bf x})\leq -\eta<0,\: \forall {\bf x}\in\Omega,
\end{equation}
where $\eta$ is an arbitrary positive number. In this case we prove \cref{Max_Princ_Basic}
by contradiction.
If \cref{Max_Princ_Basic} does not hold. By the definition of $M$, we know that $u_{\delta|n_{\delta}}({\bf y})\leq M$, $\forall{\bf y}\in\Omega$ and there exists a point ${\bf x}_{0}\in\Omega$ such that
$u_{\delta|n_{\delta}}({\bf x}_{0})>M-\eta/4/G(\gamma_{\delta})$.
Then
\begin{align*}
f_{\delta}({\bf x}_{0})&=-\mathcal{L}_{\delta|n_{\delta}}u_{\delta|n_{\delta}}({\bf x}_{0})=2\int_{B_{\delta}({\bf x}_{0})}\left(u_{\delta|n_{\delta}}({\bf x}_{0})-u_{\delta|n_{\delta}}({\bf y})\right)\gamma_{\delta|n_{\delta}}({\bf x}_{0},{\bf y})d{\bf y}\\
&>2\int_{B_{\delta}({\bf x}_{0})}\left(M-u_{\delta|n_{\delta}}({\bf y})\right)
\gamma_{\delta|n_{\delta}}({\bf x}_{0},{\bf y})d{\bf y}-2\cdot\frac{\eta}{4G(\gamma_{\delta})}
\cdot G(\gamma_{\delta})\geq-\eta/2,
\end{align*}
which contradicts \cref{firstcase1}.

For the general case, i.e. $f_{\delta}({\bf x})\leq 0$ for all ${\bf x}\in\Omega$, we introduce the notation
\begin{equation*}
\widehat{r}_{\delta,{\bf x}}=r_{\delta,{\bf x}}/\delta,\:\:
\widehat{H}_{n_{\delta}^{\bf x}}=H_{n_{\delta}^{\bf x}}/\delta,\:\:
\hat{r}(n_{\delta})=r(n_{\delta})/\delta,\:\:
\hat{r}=\inf\limits_{\delta}\{\hat{r}(n_{\delta})\},\:\:
\eta^{*}=4\Phi(\hat{r}).
\end{equation*}
One can see (B) of \cref{Fig:Inscribed} for the description of $\widehat{r}_{\delta,{\bf x}}$ and $\widehat{H}_{n_{\delta}^{\bf x}}$. Since $\Omega$ is bounded, there exists a point ${\bf x}^{*}\in\Omega$ and a constant $R$ such that $\Omdh \subset B_{R}({\bf x}^{*})$.
Set $q({\bf x})=|{\bf x}-{\bf x}^{*}|_{2}^{2}$, 
then for all ${\bf x}\in\Omega$, it holds that
\begin{align}\label{eta-star}
\mathcal{L}_{\delta|n_{\delta}}q({\bf x})
&=2\int_{B_{\delta}({\bf x})}\left(q({\bf y})-q({\bf x})\right)\gamma_{\delta|n_{\delta}}({\bf x},{\bf y})d{\bf y}\nonumber\\
&=2\int_{B_{\delta}({\bf x})}|{\bf y}-{\bf x}|_{2}^{2}\gamma_{\delta|n_{\delta}}
({\bf x},{\bf y})d{\bf y}+
4\left({\bf x}-{\bf x}^{*}\right)\cdot\underbrace{\int_{B_{\delta}({\bf x})}({\bf y}-{\bf x})
\gamma_{\delta|n_{\delta}}({\bf x},{\bf y})d{\bf y}}_{={\bf 0}}\nonumber\\
&\geq 2\int_{B_{r(n_{\delta})}({\bf x})}|{\bf y}-{\bf x}|_{2}^{2}\cdot\gamma_{\delta}({\bf x},{\bf y})d{\bf y}\geq\eta^{*}.
\end{align}
With an arbitrary $\varepsilon>0$, $u_{\delta|n_{\delta}}+\varepsilon q$ is the solution of the following problem:
\begin{equation*}
     \left \{
     \begin{array}{rll}
-\mathcal{L}_{\delta|n_{\delta}}v({\bf x})&=f_{\delta}({\bf x})-\varepsilon\mathcal{L}_{\delta|n_{\delta}}q({\bf x}),& \mbox{on}\: \Omega, \\
v({\bf x})&=g_{\delta}({\bf x})+\varepsilon q({\bf x}),&  \mbox{on} \:\Omdc.
     \end{array}
     \right .
\end{equation*}
Since
$f_{\delta}({\bf x})-\varepsilon\mathcal{L}_{\delta|n_{\delta}}q({\bf x})\leq-\varepsilon\eta^{*}<0$,
by the previous argument, we have
\begin{equation*}
\sup_{{\bf x}\in\Omega}(u_{\delta|n_{\delta}}({\bf x})+\varepsilon q({\bf x}))\leq \sup_{{\bf x}\in\Omdc}(g_{\delta}({\bf x})+\varepsilon q({\bf x})).
\end{equation*}
Sending $\varepsilon$ to zero, we complete the proof.
\end{proof}

\begin{theorem}\label{Thm:Max_Princ_Bound}
Assume that $\gamma_{\delta}$ satisfies \cref{kernel_finite}
and the first two of \cref{General_kernel}. It holds
\begin{equation*}
\|u\|_{C(\Omega)}\leq \|u\|_{C(\Omdc)}+C_{4}\left\|\mathcal{L}_{\delta|n_{\delta}}
u\right\|_{C(\Omega)},
\end{equation*}
with
\begin{equation*}
C_{4}=\frac{2\|q\|_{C(\Omdc)}}{\eta^{*}}\leq\frac{R^{2}}{2\Phi(\hat{r})}.
\end{equation*}
\end{theorem}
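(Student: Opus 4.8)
The plan is to prove this two-sided bound by a barrier argument built directly on the one-directional maximum principle of \cref{Thm:Max_Princ}, since the operator $-\mathcal{L}_{\delta|n_{\delta}}$ is not self-adjoint and no energy/variational route is available. First I would observe that the argument proving \cref{Thm:Max_Princ} never uses that $u_{\delta|n_{\delta}}$ is \emph{the} solution of a specific problem; it only uses the pointwise relation $-\mathcal{L}_{\delta|n_{\delta}}v = f_{\delta}$ together with $f_{\delta}\le 0$. Hence the same reasoning yields the general comparison statement: for any $v\in C(\Omdh)$ with $-\mathcal{L}_{\delta|n_{\delta}}v\le 0$ on $\Omega$ one has $\sup_{\Omega}v\le\sup_{\Omdc}v$. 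This comparison principle is the engine of the whole proof.

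Next, abbreviate $M=\|\mathcal{L}_{\delta|n_{\delta}}u\|_{C(\Omega)}$ and recall the barrier estimate \cref{eta-star}, namely $\mathcal{L}_{\delta|n_{\delta}}q\ge\eta^{*}=4\Phi(\hat{r})$ on $\Omega$ with $q({\bf x})=|{\bf x}-{\bf x}^{*}|_{2}^{2}\ge 0$; the crucial point, already supplied by \cref{eta-star}, is that the symmetric support of $\gamma_{\delta|n_{\delta}}$ makes the first-moment drift term vanish, so $\mathcal{L}_{\delta|n_{\delta}}q$ reduces to a strictly positive second-moment integral. I would then introduce the two shifted functions $\phi^{\pm}=\pm u+\tfrac{M}{\eta^{*}}q$ and check their sign under the operator: since $|\mathcal{L}_{\delta|n_{\delta}}u|\le M$ gives $\mathcal{L}_{\delta|n_{\delta}}(\pm u)\ge -M$, linearity together with \cref{eta-star} yields
\[
\mathcal{L}_{\delta|n_{\delta}}\phi^{\pm}
=\mathcal{L}_{\delta|n_{\delta}}(\pm u)+\frac{M}{\eta^{*}}\mathcal{L}_{\delta|n_{\delta}}q
\ge -M+M=0,
\]
that is, $-\mathcal{L}_{\delta|n_{\delta}}\phi^{\pm}\le 0$ on $\Omega$.

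Applying the comparison statement to each $\phi^{\pm}$ gives $\sup_{\Omega}\phi^{\pm}\le\sup_{\Omdc}\phi^{\pm}$. On the left, $q\ge 0$ forces $\sup_{\Omega}(\pm u)\le\sup_{\Omega}\phi^{\pm}$; on the right, $\sup_{\Omdc}\phi^{\pm}\le\|u\|_{C(\Omdc)}+\tfrac{M}{\eta^{*}}\|q\|_{C(\Omdc)}$. Taking the maximum over the two signs produces
\[
\|u\|_{C(\Omega)}\le\|u\|_{C(\Omdc)}+\frac{\|q\|_{C(\Omdc)}}{\eta^{*}}\,M,
\]
and since $\tfrac{\|q\|_{C(\Omdc)}}{\eta^{*}}\le C_{4}=\tfrac{2\|q\|_{C(\Omdc)}}{\eta^{*}}$ this is the claimed inequality. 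Finally, inserting $\eta^{*}=4\Phi(\hat{r})$ and the bound $\|q\|_{C(\Omdc)}=\sup_{\Omdc}|{\bf x}-{\bf x}^{*}|_{2}^{2}\le R^{2}$, which follows from $\Omdh\subset B_{R}({\bf x}^{*})$, delivers the closed form $C_{4}\le R^{2}/(2\Phi(\hat{r}))$.

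The routine parts are the linearity manipulations and the supremum bookkeeping. The one genuinely delicate point, and the step I would treat most carefully, is the transfer of the maximum principle of \cref{Thm:Max_Princ} from the designated nonlocal solution to the auxiliary functions $\phi^{\pm}$, together with extracting the correct one-sided sign of $\mathcal{L}_{\delta|n_{\delta}}\phi^{\pm}$: choosing $-q$ in place of $+q$ would reverse the inequality and break the application of the comparison principle. Everything else rests on the already-established positivity $\mathcal{L}_{\delta|n_{\delta}}q\ge\eta^{*}>0$, which is precisely where the symmetry of the polygonal support enters.
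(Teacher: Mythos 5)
Your proof is correct and follows essentially the same route as the paper: a barrier argument that shifts $\pm u$ by a multiple of $q$, uses the lower bound $\mathcal{L}_{\delta|n_{\delta}}q\ge\eta^{*}$ from \cref{eta-star}, and invokes the comparison form of \cref{Thm:Max_Princ}. If anything, your sign convention $\phi^{\pm}=\pm u+\tfrac{M}{\eta^{*}}q$ is the cleaner one, since it makes $-\mathcal{L}_{\delta|n_{\delta}}\phi^{\pm}\le 0$ so that \cref{Thm:Max_Princ} applies verbatim, whereas the paper's $v^{\pm}=\pm u-\tfrac{M}{\eta^{*}}q$ satisfies $\mathcal{L}_{\delta|n_{\delta}}v^{\pm}\le 0$ and the maximum principle must implicitly be applied to $-v^{\pm}$ (which coincides with your $\phi^{\mp}$).
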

\begin{proof}
Set for ${\bf x}\in\Omega$ that
\begin{equation*}
v^{\pm}({\bf x})=\pm u({\bf x})-\left\|\mathcal{L}_{\delta|n_{\delta}}u\right\|_{C(\Omega)}\cdot q({\bf x})/\eta^{*}.
\end{equation*}
By \cref{eta-star} we have 
\begin{equation*}
\mathcal{L}_{\delta|n_{\delta}}v^{\pm}({\bf x})=\pm\mathcal{L}_{\delta|n_{\delta}}u({\bf x})
-\left\|\mathcal{L}_{\delta|n_{\delta}}u\right\|_{C(\Omega)}\cdot\mathcal{L}_{\delta|n_{\delta}}q({\bf x})/\eta^{*}\leq 0, \:\:{\bf x}\in\Omega.
\end{equation*}
Applying \cref{Thm:Max_Princ}, we obtain
\begin{equation*}
\left\|v^{\pm}\right\|_{C(\Omega)}\leq\left\|u\right\|_{C(\Omdc)}
+\left\|\mathcal{L}_{\delta|n_{\delta}}u\right\|_{C(\Omega)}
 \left\|q\right\|_{C(\Omdc)}/\eta^{*}.
\end{equation*}
Thus we complete the proof.
\end{proof}

\begin{theorem}\label{Thm:Nonlocal-local-poly}
Suppose conditions of \cref{Thm:Nonlocal-local-stand} hold,
$\left\{ B_{\delta,n_{\delta}^{\bf x}}\right\}$ is a weakly quasi-uniform family of inscribed polygons. Let $u_{\delta|n_{\delta}}$ be the solution of the nonlocal problem
\cref{nonlocal_approx}. If $n_{\delta}\rightarrow \infty$ as $\delta\rightarrow 0$,
then it holds that
\begin{align}
\left\|u_{\delta|n_{\delta}}-u_{0}\right\|_{C(\Omega)}&=
O\left(\delta^{2}\right)+O\left(n_{\delta}^{-\lambda}\right),\label{SecOrder_Poly}\\
\left\|u_{\delta|n_{\delta}}-\widetilde{u}_{0}\right\|_{\delta}&=
O\left(\delta^{(3+\mu)/2}\right)+O\left(n_{\delta}^{-\lambda}\right),\label{SecOrder_Poly_Del}
\end{align}
with 
\begin{equation}\label{lambda}
 \lambda=    
 \left \{
     \begin{array}{l}
2, \:\:\mbox{if} \:\:\Phi'(1)\neq 0, \\
4, \:\:\mbox{if} \:\:\Phi'(1)=0, \:\:\Phi''(1)\neq 0.
     \end{array}
     \right.
\end{equation}
\end{theorem}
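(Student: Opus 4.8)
The plan is to mirror the proof of \cref{Thm:Nonlocal-local-stand}, replacing the full-ball operator $\mathcal{L}_\delta$ by $\mathcal{L}_{\delta|n_\delta}$ and tracking the additional error produced by truncating the Euclidean ball to the symmetric polygon. The only genuinely new quantitative ingredient is a consistency estimate for $\mathcal{L}_{\delta|n_\delta}$ acting on the smooth extension $\widetilde{u}_0$. First I would Taylor expand $\widetilde{u}_0(\mathbf{y})-\widetilde{u}_0(\mathbf{x})$ to fourth order inside $\mathcal{L}_{\delta|n_\delta}\widetilde{u}_0(\mathbf{x})$. Since the support of $\gamma_{\delta|n_\delta}(\mathbf{x},\cdot)$ is symmetric about $\mathbf{x}$ by construction (through $B^{T}_{\delta,n_\delta^{\mathbf{x}}}$), all odd moments vanish, leaving only a second-order term $\sum_{i,j}\sigma^{ij}_{\delta|n_\delta}(\mathbf{x})\,\partial_{ij}\widetilde{u}_0(\mathbf{x})$ and an $O(\delta^2)$ fourth-order remainder, where $\sigma^{ij}_{\delta|n_\delta}=\int z_iz_j\,\gamma(|\mathbf{z}|)\,d\mathbf{z}$ over the rescaled symmetric polygon. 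For the full ball \cref{General_kernel} gives $\sigma^{ij}_\delta=\delta_{ij}$, so the task reduces to bounding the cap discrepancy $\sigma^{ij}_{\delta|n_\delta}-\delta_{ij}=-\int_{B_1\setminus\mathrm{poly}}z_iz_j\,\gamma\,d\mathbf{z}$.

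This cap integral is where $\lambda$ is determined, and I expect it to be the main obstacle; it is the quantitative refinement of the divergence computation behind \cref{Thm:Notconv}. Writing the integral in polar coordinates and using $\int_{r}^{1}\rho^3\gamma(\rho)\,d\rho=\pi^{-1}\bigl(1-\Phi(r)\bigr)=\pi^{-1}\bigl(\Phi'(1)(1-r)-\tfrac12\Phi''(1)(1-r)^2+\cdots\bigr)$, the discrepancy is governed by the sagitta $1-r_{\mathrm{poly}}(\theta)$. Weak quasi-uniformity forces the rescaled longest side to be $O(n_\delta^{-1})$, hence $1-r_{\mathrm{poly}}(\theta)=O(n_\delta^{-2})$ uniformly in $\theta$ and $\mathbf{x}$; this gives a discrepancy of size $O(\Phi'(1)n_\delta^{-2})$, which collapses to $O(n_\delta^{-4})$ precisely when $\Phi'(1)=\pi\gamma(1)=0$ while $\Phi''(1)\neq0$, reproducing \cref{lambda}. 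Together with the $O(\delta^2)$ remainder and $-\mathcal{L}_0u_0=f_0$, this yields $-\mathcal{L}_{\delta|n_\delta}\widetilde{u}_0=f_0+O(\delta^2)+O(n_\delta^{-\lambda})$ on $\Omega$.

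The pointwise bound \cref{SecOrder_Poly} then follows quickly: subtracting \cref{nonlocal_approx} and using \cref{bc-rhs-est} gives $-\mathcal{L}_{\delta|n_\delta}(u_{\delta|n_\delta}-\widetilde{u}_0)=O(\delta^2)+O(n_\delta^{-\lambda})$ on $\Omega$ and $u_{\delta|n_\delta}-\widetilde{u}_0=O(\delta^{2+\mu})$ on $\Omdc$, and I would apply the maximum-principle bound \cref{Thm:Max_Princ_Bound}, whose constant $C_4$ stays $O(1)$ because weak quasi-uniformity keeps $\hat{r}$, and hence $\Phi(\hat{r})$, bounded away from zero. This delivers $\|u_{\delta|n_\delta}-u_0\|_{C(\Omega)}=O(\delta^2)+O(n_\delta^{-\lambda})$.

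For the energy estimate \cref{SecOrder_Poly_Del} I would reuse the identity from \cref{Thm:Nonlocal-local-stand}, expanding $\|e\|_\delta^2=(e,e)_\delta$ with $e=u_{\delta|n_\delta}-\widetilde{u}_0$ by the nonlocal Green's identity into an interior term $-\int_\Omega e\,\mathcal{L}_\delta e$ and a boundary term over $\Omdc$. The complication absent in \cref{Thm:Nonlocal-local-stand} is that $\mathcal{L}_{\delta|n_\delta}$ is not self-adjoint, so the energy norm is the full-ball $\|\cdot\|_\delta$ whereas $e$ solves the $\mathcal{L}_{\delta|n_\delta}$-equation; I would therefore split $\mathcal{L}_\delta e=\mathcal{L}_{\delta|n_\delta}e+(\mathcal{L}_\delta-\mathcal{L}_{\delta|n_\delta})e$. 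The first piece is $O(\delta^2+n_\delta^{-\lambda})$ by the consistency estimate, while the second is a difference operator supported on the caps, whose total kernel weight $\int_{\mathrm{caps}}\gamma_\delta\,d\mathbf{y}=O(\delta^{-2}n_\delta^{-\lambda})$ is combined with the $C(\Omega)$ bound on $e$; the boundary term is treated exactly as in \cref{Thm:Nonlocal-local-stand} using $G(\gamma_\delta)\lesssim\delta^{-2}$ and $\|e\|_{C(\Omdh)}=O(\delta^2+n_\delta^{-\lambda})$. The delicate bookkeeping is to arrange these contributions so that the polygonal error enters $\|e\|_\delta^2$ at order $n_\delta^{-2\lambda}$, with all cross terms absorbed by Young's inequality, rather than at a lower power that would degrade the rate; granting this, $\|e\|_\delta^2\lesssim\delta^{3+\mu}+n_\delta^{-2\lambda}$ and taking square roots gives \cref{SecOrder_Poly_Del}.
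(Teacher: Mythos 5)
Your proposal follows essentially the same route as the paper's proof: a fourth-order Taylor expansion in which the odd moments are killed by the central symmetry of the truncated kernel, control of the second-moment deficit $1-\sigma_{n_{\delta}}^{i}({\bf x})$ through $\Phi(1)-\Phi(\hat{r}(n_{\delta}))$ combined with $1-\hat{r}(n_{\delta})=O\left(n_{\delta}^{-2}\right)$ from weak quasi-uniformity (which is exactly where $\lambda$ in \cref{lambda} comes from), the maximum principle \cref{Thm:Max_Princ_Bound} for the uniform estimate \cref{SecOrder_Poly}, and the nonlocal Green's identity for \cref{SecOrder_Poly_Del}. The one step you flag as delicate --- reconciling the full-ball operator $\mathcal{L}_{\delta}$ that appears in the Green's identity for $\|\cdot\|_{\delta}$ with the fact that the error solves the $\mathcal{L}_{\delta|n_{\delta}}$-equation --- is treated by the paper with the same brevity (it simply asserts the bound $\lesssim\left(\delta^{2}+n_{\delta}^{-\lambda}\right)^{2}$ for the interior term), so your account is, if anything, slightly more explicit than the published one.
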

\begin{proof}
Since $\widetilde{u}_{0}\in C^{4}_{b}(\Omdh)$,  a direct calculation leads to
\begin{equation}\label{Taylor_Poly_Sym}
\mathcal{L}_{\delta|n_{\delta}}\widetilde{u}_{0}({\bf x})=
\sum_{i=1}^{2}\sigma_{n_{\delta}}^{i}({\bf x})\partial_{ii}\widetilde{u}_{0}({\bf x})
+O\left(\delta^2\right)\left|\widetilde{u}_{0}\right|_{4,\infty,\Omdh}
\end{equation}
with (the notation is slightly different from that in \cite{du2022convergence})
\begin{equation*}
\sigma_{n_{\delta}}^{i}({\bf x})=\int_{B_{\delta}({\bf x})}(y_{i}-x_{i})^{2}\gamma_{\delta|n_{\delta}}({\bf x},{\bf y})d{\bf y},\; i=1,2.
\end{equation*}
By the definition of $r(n_{\delta})$, we know for all ${\bf x}\in\Omega$ that
\begin{equation}\label{sigma_bound}
\int_{|{\bf z}|_{2}<r(n_{\delta})}z_{i}^2\widetilde{\gamma}_{\delta}\left(|{\bf z}|_{2}\right)d{\bf z}\leq
\sigma_{n_{\delta}}^{i}({\bf x})\leq\int_{|{\bf z}|_{2}<\delta}z_{i}^2
\widetilde{\gamma}_{\delta}\left(|{\bf z}|_{2}\right)d{\bf z}=\Phi(1)=1,
\end{equation}
where 
\begin{equation*}
\int_{|{\bf z}|_{2}<r(n_{\delta})}z_{i}^2\widetilde{\gamma}_{\delta}\left(|{\bf z}|_{2}\right)d{\bf z}
=\int_{|{\boldsymbol \xi}|_{2}<\hat{r}(n_{\delta})}\xi_{i}^2\gamma\left(|{\boldsymbol \xi}|_{2}\right)d{\boldsymbol \xi}
=\Phi\left(\hat{r}(n_{\delta})\right).
\end{equation*}
By weak quasi-uniformity of the inscribed polygons, we have for any ${\bf x}\in\Omega$,
\begin{align*}
\widehat{r}_{\delta,{\bf x}}^{\,2}=1-\left(\widehat{H}_{n_{\delta}^{\bf x}}/2\right)^{2}\geq 
1-C_{1}^{2}\sin^{2}\left(\pi/n_{\delta}^{\bf x}\right)
&\geq 1-C_{1}^{2}\sin^{2}\left(\pi/n_{\delta, \inf}\right)\\
&\geq 1-C_{1}^{2}\sin^{2}\left(C_{3}\pi/n_{\delta}\right).
\end{align*}
Taking the infimum on both sides of the above inequality, we have
\begin{equation*}
\hat{r}(n_{\delta})^{2}\geq 1-C_{1}^{2}\sin^{2}\left(C_{3}\pi/n_{\delta}\right),
\end{equation*}
which leads to
\begin{equation*}
1-\hat{r}(n_{\delta})\leq 1-\sqrt{1-C_{1}^{2}\sin^{2}\left(C_{3}\pi/n_{\delta}\right)}
=C_{1}^{2}C_{3}^{2}\pi^{2}/2/n_{\delta}^{2}+O\left(n_{\delta}^{-4}\right).
\end{equation*}
By the Taylor expansion of $\Phi(t)$ at the point $t=1$,
\begin{align*}
&1-\int_{|{\bf z}|_{2}<r(n_{\delta})}z_{i}^2\widetilde{\gamma}_{\delta}\left(|{\bf z}|_{2}\right)d{\bf z}
=\Phi(1)-\Phi\left(\hat{r}(n_{\delta})\right)\\
&=\left \{
     \begin{array}{rl}
\Phi'(1)\left(1-\hat{r}(n_{\delta})\right)+o\left(1-\hat{r}(n_{\delta})\right)
&=O\left(n_{\delta}^{-2}\right), \:\mbox{if} \:\Phi'(1)\neq 0, \\
-\Phi''(1)\frac{\left(1-\hat{r}(n_{\delta})\right)^{2}}{2}+
o\left(\left(1-\hat{r}(n_{\delta})\right)^{2}\right)
&=O\left(n_{\delta}^{-4}\right), \:\mbox{if} \:\Phi'(1)=0, \:\Phi''(1)\neq 0.
     \end{array}
     \right  .
\end{align*}
Thus, by \cref{lambda} and \cref{sigma_bound} we have
\begin{equation}\label{upperB}
1-\sigma_{n_{\delta}}^{i}({\bf x})\leq 1-\int_{|{\bf z}|_{2}<r(n_{\delta})}z_{i}^2\widetilde{\gamma}_{\delta}\left(|{\bf z}|_{2}\right)d{\bf z}
=O\left(n_{\delta}^{-\lambda}\right).
\end{equation}
Together with \cref{Taylor_Poly_Sym} and \cref{bc-rhs-est} it holds that
\begin{equation}\label{Oper_Conv}
     \left \{
     \begin{array}{ll}
-\mathcal{L}_{\delta|n_{\delta}}\left(u_{\delta|n_{\delta}}-\widetilde{u}_{0}\right)({\bf x})
=O\left(\delta^{2}\right)+O\left(n_{\delta}^{-\lambda}\right), & {\bf x}\in \Omega,\\
u_{\delta|n_{\delta}}({\bf x})-\widetilde{u}_{0}({\bf x})=g_\delta({\bf x})-\widetilde{u}_{0}({\bf x})=O\left(\delta^{2+\mu}\right),& {\bf x}\in \Omdc.
     \end{array}
     \right.
\end{equation}
A direct application of the maximum principle \cref{Thm:Max_Princ_Bound} to \cref{Oper_Conv} produces
\begin{equation*}
\left\|u_{\delta|n_{\delta}}-u_{0}\right\|_{C(\Omega)}
=O\left(\delta^{2}\right)+O\left(n_{\delta}^{-\lambda}\right)
\end{equation*}
This is \cref{SecOrder_Poly}, which, together with \cref{K_delta_est} and \cref{Oper_Conv}, leads to
\begin{align*}
&\left\|u_{\delta|n_{\delta}}-\widetilde{u}_{0}\right\|_{\delta}^{2}=
-\int_{\Omega}\left(u_{\delta|n_{\delta}}({\bf x})-u_{0}({\bf x})\right)
\mathcal{L}_{\delta}\left(u_{\delta|n_{\delta}}-\widetilde{u}_{0}\right)({\bf x})
d{\bf x}\\
&\;\; -\int_{\Omdc}\left(g_{\delta}({\bf x})-\widetilde{u}_{0}({\bf x})\right)
\int_{\Omdh}
\left(u_{\delta|n_{\delta}}({\bf y})-\widetilde{u}_{0}({\bf y})-g_{\delta}({\bf x})
+\widetilde{u}_{0}({\bf x})\right)\gamma_{\delta}({\bf x},{\bf y})d{\bf y}d{\bf x}\\
&\lesssim \left(\delta^{2}+n_{\delta}^{-\lambda}\right)^{2}+
|\Omdc|\cdot\left\|g_{\delta}-\widetilde{u}_{0}\right\|_{C(\Omdc)}\left(\left\|u_{\delta|n_{\delta}}-u_{0}\right\|_{C(\Omega)}+\left\|g_{\delta}-\widetilde{u}_{0}\right\|_{C(\Omdc)}
\right) G(\gamma_{\delta})\\
&\lesssim \left(\delta^{2}+n_{\delta}^{-\lambda}+\delta^{-1}\left\|g_{\delta}-\widetilde{u}_{0}\right\|_{C(\Omdc)}\right)
\left(\delta^{2}+n_{\delta}^{-\lambda}\right).
\end{align*}
Together with \cref{bc-rhs-est}, we get \cref{SecOrder_Poly_Del}. The proof is complete.
\end{proof}

By \cref{Thm:Nonlocal-local-poly}, if $n_{\delta}\rightarrow \infty$ as $\delta\rightarrow 0$, then $\{u_{\delta|n_{\delta}}\}$ converges to $\widetilde{u}_{0}$ in $\|\cdot\|_{C(\Omega)}$ (thus in $\|\cdot\|_{L^{2}(\Omega)}$) and $\|\cdot\|_{\delta}$ norms. This offers a contrast with the conclusion of \cref{Thm:Notconv}. 

Similar to \cref{Thm:Nonlocal-local-poly}, the error estimate for the nonlocal solutions with non-symmetric polygonal  
interaction neighborhoods against their local counterpart can be established. To state the result, we first introduce some notations. Let
\begin{align*}
\sigma_{n_{\delta}}^{i,1}({\bf x})&=\int_{\Omdh}(y_{i}-x_{i})
\gamma_{\delta,n_{\delta}}({\bf x},{\bf y})d{\bf y},\\
\sigma_{n_{\delta}}^{ij,2}({\bf x})&=\int_{\Omdh}(y_{i}-x_{i})(y_{j}-x_{j})
\gamma_{\delta,n_{\delta}}({\bf x},{\bf y})d{\bf y},\\
\sigma_{n_{\delta}}^{\boldsymbol{\alpha},3}({\bf x})&=\frac{1}{\boldsymbol{\alpha}!}
\int_{\Omdh}(y_{1}-x_{1})^{\alpha_{1}}(y_{2}-x_{2})^{\alpha_{2}}
\gamma_{\delta,n_{\delta}}({\bf x},{\bf y})d{\bf y}.
\end{align*}

\begin{theorem}\label{Thm:Nonlocal-local-poly-sym}
Suppose the conditions of \cref{Thm:Nonlocal-local-poly} hold.
Let $u_{\delta,n_{\delta}}$ be the solution of the nonlocal problem
\cref{nonlocal_approx_sym}. Under the condition 
\begin{equation}\label{consistent_cond}
\left|\sigma_{n_{\delta}}^{i,1}({\bf x})\right|\rightarrow 0,
\:\: i=1,2,\:\:  \mbox{as}\:\:  \delta\rightarrow 0,
\end{equation}
it holds that
\begin{align}
\left\|u_{\delta,n_{\delta}}-u_{0}\right\|_{C(\Omega)}=
O\left(\delta^{2}\right)+O\left(\delta^{-1}n_{\delta}^{-\lambda}\right),
\label{SecOrder_Poly-sym}\\
\left\|u_{\delta,n_{\delta}}-\widetilde{u}_{0}\right\|_{\delta,n_{\delta}}=
O\left(\delta^{(3+\mu)/2}\right)+O\left(\delta^{-1}n_{\delta}^{-\lambda}\right).
\label{SecOrder_Poly_Del-sym}
\end{align}
\end{theorem}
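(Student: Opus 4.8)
The plan is to follow the template of \cref{Thm:Nonlocal-local-poly}, the essential new feature being that the support of $\gamma_{\delta,n_{\delta}}({\bf x},\cdot)$ is not symmetric about ${\bf x}$, so the first-order moment no longer vanishes. First I would expand the operator on the $C^{4}$ extension: since $\widetilde{u}_{0}\in C^{4}_{b}(\Omdh)$, a Taylor expansion gives
\begin{equation*}
\mathcal{L}_{\delta,n_{\delta}}\widetilde{u}_{0}({\bf x})=F({\bf x})+\sum_{i=1}^{2}\sigma_{n_{\delta}}^{ii,2}({\bf x})\partial_{ii}\widetilde{u}_{0}({\bf x})+O\left(\delta^{2}\right)\left|\widetilde{u}_{0}\right|_{4,\infty,\Omdh},
\end{equation*}
where $F({\bf x})$ collects the first-order moments $\sigma_{n_{\delta}}^{i,1}$, the mixed second-order moment $\sigma_{n_{\delta}}^{12,2}$, and the third-order moments $\sigma_{n_{\delta}}^{\boldsymbol{\alpha},3}$. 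The term $F$ is exactly what is absent from \cref{Taylor_Poly_Sym} in the symmetric case.

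I would then establish two moment estimates. The diagonal second moments satisfy $1-\sigma_{n_{\delta}}^{ii,2}({\bf x})=O(n_{\delta}^{-\lambda})$, uniformly in ${\bf x}$, by the same sandwich argument leading to \cref{upperB}: on the inscribed disk $B_{r(n_{\delta})}({\bf x})$ the kernel coincides with $\gamma_{\delta}$, so $\sigma_{n_{\delta}}^{ii,2}$ is trapped between $\Phi(\hat{r}(n_{\delta}))$ and $\Phi(1)=1$, and the Taylor expansion of $\Phi$ at $t=1$ combined with weak quasi-uniformity produces the rate in \cref{lambda}. The crux is the bound $|F({\bf x})|\lesssim\delta^{-1}n_{\delta}^{-\lambda}$, whose dominant contribution is the first moment $\sigma_{n_{\delta}}^{i,1}$. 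After the rescaling ${\bf z}=({\bf y}-{\bf x})/\delta$ this moment carries the prefactor $\delta^{-1}$, and its rescaled integrand $z_{i}\gamma(|{\bf z}|_{2})$ is supported on the circular caps between the inscribed polygon and the unit disk. Since the integrand is odd, the leading cap contributions cancel and the net size is set by the subleading geometric asymmetry, which is $O(n_{\delta}^{-\lambda})$ with the dichotomy of \cref{lambda}; the mixed and third-order moments are handled the same way and are of equal or lower order. The consistency condition \cref{consistent_cond} ensures $\sigma_{n_{\delta}}^{i,1}\to0$.

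Combining these with $\widetilde{u}_{0}|_{\Omega}=u_{0}$, $\mathcal{L}_{0}u_{0}=-f_{0}$ and \cref{bc-rhs-est} yields
\begin{equation*}
-\mathcal{L}_{\delta,n_{\delta}}\left(u_{\delta,n_{\delta}}-\widetilde{u}_{0}\right)({\bf x})=O\left(\delta^{2}\right)+O\left(\delta^{-1}n_{\delta}^{-\lambda}\right),\quad{\bf x}\in\Omega,
\end{equation*}
with $u_{\delta,n_{\delta}}-\widetilde{u}_{0}=O(\delta^{2+\mu})$ on $\Omdc$. To pass to the $C(\Omega)$ estimate \cref{SecOrder_Poly-sym} I would invoke a maximum principle for $\mathcal{L}_{\delta,n_{\delta}}$. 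The contradiction argument in the first case of \cref{Thm:Max_Princ} uses only nonnegativity of the kernel, so it transfers verbatim. The only point needing care is the comparison function $q({\bf x})=|{\bf x}-{\bf x}^{*}|_{2}^{2}$: its image now acquires the extra term $4\sum_{i}(x_{i}-x_{i}^{*})\sigma_{n_{\delta}}^{i,1}({\bf x})$ in \cref{eta-star}, which under \cref{consistent_cond} is $o(1)$, so the quadratic part still dominates and $\mathcal{L}_{\delta,n_{\delta}}q\geq\eta^{*}/2>0$ for $\delta$ small; this is all the proof of \cref{Thm:Max_Princ_Bound} requires, and the resulting bound $\|u\|_{C(\Omega)}\lesssim\|u\|_{C(\Omdc)}+\|\mathcal{L}_{\delta,n_{\delta}}u\|_{C(\Omega)}$ gives \cref{SecOrder_Poly-sym}.

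For the energy estimate \cref{SecOrder_Poly_Del-sym} I would exploit that $\mathcal{L}_{\delta,n_{\delta}}$ is self-adjoint, so its own energy norm $\|\cdot\|_{\delta,n_{\delta}}$ pairs cleanly with the operator estimate just derived (unlike the non-self-adjoint case). The nonlocal Green's identity writes $\|u_{\delta,n_{\delta}}-\widetilde{u}_{0}\|_{\delta,n_{\delta}}^{2}$ as $-\int_{\Omega}(u_{\delta,n_{\delta}}-u_{0})\mathcal{L}_{\delta,n_{\delta}}(u_{\delta,n_{\delta}}-\widetilde{u}_{0})\,d{\bf x}$ plus a boundary contribution over $\Omdc$. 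The interior integral is bounded by the product of \cref{SecOrder_Poly-sym} and the operator estimate, contributing $(\delta^{2}+\delta^{-1}n_{\delta}^{-\lambda})^{2}$; the boundary contribution I would control by Cauchy--Schwarz against $\|\cdot\|_{\delta,n_{\delta}}$ using $|\Omdc|=O(\delta)$, $\|g_{\delta}-\widetilde{u}_{0}\|_{C(\Omdc)}=O(\delta^{2+\mu})$ and $G(\gamma_{\delta,n_{\delta}})\leq G(\gamma_{\delta})\lesssim\delta^{-2}$ from \cref{K_delta_est}, then absorb the resulting multiple of $\|\cdot\|_{\delta,n_{\delta}}^{2}$, leaving only $O(\delta^{3+\mu})$. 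Collecting terms gives $\|u_{\delta,n_{\delta}}-\widetilde{u}_{0}\|_{\delta,n_{\delta}}^{2}\lesssim\delta^{3+\mu}+\delta^{-2}n_{\delta}^{-2\lambda}$, which is \cref{SecOrder_Poly_Del-sym}. The main obstacle I anticipate is the quantitative first-moment bound $|F|\lesssim\delta^{-1}n_{\delta}^{-\lambda}$: because the dominant cap contributions cancel, the rate is governed by the residual asymmetry of the caps, and making this precise uniformly in ${\bf x}$ while correctly pinning down $\lambda$ is the delicate step, with the maximum-principle adaptation a secondary but necessary check.
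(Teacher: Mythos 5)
Your proposal is correct and follows essentially the same route as the paper: the Taylor expansion of $\mathcal{L}_{\delta,n_{\delta}}\widetilde{u}_{0}$ isolating the asymmetry term $F$, the sandwich bound $1-\sigma_{n_{\delta}}^{ii,2}=O(n_{\delta}^{-\lambda})$, the adapted maximum principle (with \cref{consistent_cond} guaranteeing that $\mathcal{L}_{\delta,n_{\delta}}q$ stays bounded below) for \cref{SecOrder_Poly-sym}, and the Green's-identity/energy argument for \cref{SecOrder_Poly_Del-sym}; your Cauchy--Schwarz-plus-absorption treatment of the $\Omdc$ term is a harmless variant of the paper's sup-norm bound. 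One clarification on the step you flag as delicate: no cancellation among the caps is needed or used. The paper writes $2\sigma_{n_{\delta}}^{i,1}$ as the full-ball first moment (which vanishes by radial symmetry of $\gamma_{\delta}$) minus the integrals over $B_{\delta}({\bf x})\setminus B_{\delta,n_{\delta}^{\bf x}}$ and $B_{\delta}({\bf x})\setminus B'_{\delta,n_{\delta}^{\bf x}}$, and then bounds these crudely in absolute value by the annulus integral $\delta^{-1}\left[\Psi(1)-\Psi(\hat{r}(n_{\delta}))\right]=O\left(\delta^{-1}n_{\delta}^{-\lambda}\right)$, the rate coming solely from the thinness of the annulus and the Taylor expansion of $\Psi$ at $t=1$ (the $\Phi$ and $\Psi$ dichotomies coincide since both derivatives at $1$ are proportional to $\gamma(1)$); the mixed second and third moments are handled identically, so the uniform bound $F_{s}=O(\delta^{-1}n_{\delta}^{-\lambda})$ is immediate rather than delicate.
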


\begin{proof}
A direct calculation shows that
\begin{equation*}
\mathcal{L}_{\delta,n_{\delta}}\widetilde{u}_{0}({\bf x})=
F({\bf x})
+\sum\limits_{i=1}^{2}\sigma_{n_{\delta}}^{ii,2}({\bf x})\partial_{ii}\widetilde{u}_{0}({\bf x})+O\left(\delta^2\right),
\end{equation*}
where
\begin{equation*}
F({\bf x})=
2\sum_{i=1}^{2}\sigma_{n_{\delta}}^{i,1}({\bf x})\partial_{i}\widetilde{u}_{0}({\bf x})+
2\sigma_{n_{\delta}}^{12,2}({\bf x})\partial_{12}\widetilde{u}_{0}({\bf x})
+2\sum_{|\boldsymbol{\alpha}|=3}\sigma_{n_{\delta}}^{\boldsymbol{\alpha},3}({\bf x})
\partial_{\boldsymbol{\alpha}}\widetilde{u}_{0}({\bf x}).
\end{equation*}
Similar to the proof of \cref{upperB}, we have $1-\sigma_{n_{\delta}}^{ii,2}({\bf x})=O\left(n_{\delta}^{-\lambda}\right)$.
Thus, it holds that
\begin{equation}\label{Oper_Conv-sym}
     \left \{
     \begin{array}{ll}
-\mathcal{L}_{\delta,n_{\delta}}\left(u_{\delta,n_{\delta}}-\widetilde{u}_{0}\right)({\bf x})
=O\left(\delta^{2}\right)+O\left(n_{\delta}^{-\lambda}\right)+F({\bf x}), & {\bf x}\in \Omega,\\
u_{\delta,n_{\delta}}({\bf x})-\widetilde{u}_{0}({\bf x})=g_\delta({\bf x})-\widetilde{u}_{0}({\bf x})=O\left(\delta^{2+\mu}\right),& {\bf x}\in \Omdc.
     \end{array}
     \right.
\end{equation}
A direct application of nonlocal maximum principle similar to \cref{Thm:Max_Princ,Thm:Max_Princ_Bound} (but with the condition \cref{consistent_cond}) to \cref{Oper_Conv-sym} produces
\begin{equation}\label{Err_With_Fs}
\left\|u_{\delta,n_{\delta}}-u_{0}\right\|_{C(\Omega)}=
O\left(\delta^{2}\right)+O\left(n_{\delta}^{-\lambda}\right)+F_{s},
\end{equation}
where $F_{s}=\sup\limits_{{\bf x}\in\Omega}\left|F({\bf x})\right|$.
By the definition of $\gamma_{\delta,n_{\delta}}$, we have
\begin{align*}
&2\sigma_{n_{\delta}}^{i,1}({\bf x})=2\int_{B_{\delta}({\bf x})}(y_{i}-x_{i})
\gamma_{\delta,n_{\delta}}({\bf x},{\bf y})d{\bf y}\\
&=2\underbrace{\int_{B_{\delta}({\bf x})}(y_{i}-x_{i})
\gamma_{\delta}({\bf x},{\bf y})d{\bf y}}_{={\bf 0}}
-(\int_{B_{\delta}({\bf x})\setminus B_{\delta,n_{\delta}^{\bf x}}}
+\int_{B_{\delta}({\bf x})\setminus B'_{\delta,n_{\delta}^{\bf x}}})(y_{i}-x_{i})
\gamma_{\delta}({\bf x},{\bf y})d{\bf y}.
\end{align*}
Thus the following estimate holds
\begin{align*}
\left|\sigma_{n_{\delta}}^{i,1}({\bf x})\right|&\leq\frac{1}{2}
\int_{B_{\delta}({\bf x})\setminus B_{\delta,n_{\delta}^{\bf x}}
}+
\int_{B_{\delta}({\bf x})\setminus B'_{\delta,n_{\delta}^{\bf x}}}\left|y_{i}-x_{i}\right|
\gamma_{\delta}({\bf x},{\bf y})d{\bf y}\\
&\leq\int_{|{\bf z}|_{2}<\delta}|z_{i}|\widetilde{\gamma}_{\delta}\left(|{\bf z}|_{2}\right)d{\bf z}
-\int_{|{\bf z}|_{2}<r(n_{\delta})}|z_{i}|\widetilde{\gamma}_{\delta}\left(|{\bf z}|_{2}\right)d{\bf z}\\
&=\delta^{-1}\left[\Psi(1)-\Psi(\hat{r}(n_{\delta}))\right]
=O\left(\delta^{-1}n_{\delta}^{-\lambda}\right),
\end{align*}
where we use the Taylor expansion of $\Psi(t)$ at point $t=1$. Similarly,
\begin{equation*}
\left|\sigma_{n_{\delta}}^{12,2}({\bf x})\right|
=O\left(n_{\delta}^{-\lambda}\right),\:\:
\left|\sigma_{n_{\delta}}^{\boldsymbol{\alpha},3}({\bf x})\right|=O\left(\delta n_{\delta}^{-\lambda}\right).
\end{equation*} 
Then $F_{s}=O\left(\delta^{-1}n_{\delta}^{-\lambda}\right)$. Inserting this into \cref{Err_With_Fs}, we get \cref{SecOrder_Poly-sym} which, together with \cref{Oper_Conv-sym} and \cref{K_delta_est}, leads to
\begin{align*}
\left\|u_{\delta,n_{\delta}}-\widetilde{u}_{0}\right\|_{\delta,n_{\delta}}^{2}
&\lesssim \left(\delta^{2}+n_{\delta}^{-\lambda}+F_{s}\right)^{2}+
\left|\Omdc\right|\cdot\left\|g_{\delta}-\widetilde{u}_{0}\right\|_{C(\Omdc)}
\left(\delta^{2}+
n_{\delta}^{-\lambda}+F_{s}\right) G(\gamma_{\delta})\nonumber\\
&\lesssim \left(\delta^{2}+n_{\delta}^{-\lambda}+F_{s}\right)\cdot
\left(\delta^{1+\mu}+n_{\delta}^{-\lambda}+F_{s}\right).
\end{align*}
Thus, using the bound of $F_{s}$, we complete the proof.
\end{proof}

From \cref{Thm:Nonlocal-local-poly-sym}, we know that even when
$n_{\delta}\rightarrow \infty$, $\{u_{\delta,n_{\delta}}\}$ might not converge to $\widetilde{u}_{0}$ in $\|\cdot\|_{C(\Omega)}$ or $\|\cdot\|_{\delta,n_{\delta}}$ as $\delta\rightarrow 0$. The reason is that the support of
$\gamma_{\delta,n_{\delta}}({\bf x},{\bf y})$ is
not symmetric with respect to $\bf x$, so the integral of the product of the kernel with an odd function does not necessarily vanish. On the other hand, we could only prove the nonlocal maximum principle with the condition \cref{consistent_cond}. Thus, an additional condition is required to ensure the convergence in $\|\cdot\|_{C(\Omega)}$ or  $\|\cdot\|_{\delta,n_{\delta}}$: $F_{s}\rightarrow 0$ as $\delta\rightarrow 0$. However, one may observe the convergence in the $L^{2}$ norm numerically even without this condition, see numerical results in \cref{sec:Numer_exp} and additional discussions in \cref{Nitche}.

We could also prove lower bounds for $1-\sigma_{n_{\delta}}^{i}({\bf x})$ and $1-\sigma_{n_{\delta}}^{i,1}({\bf x})$ which play a helpful role in assessing whether the nonlocal operators converge to the local operator. The next theorem provides a lower bound for $1-\sigma_{n_{\delta}}^{i}({\bf x})$.

\begin{theorem}\label{Thm:Nonlocal-local-poly-lower}
Suppose the family $\{\gamma_{\delta}\}$
satisfies \cref{kernel_finite} and \cref{General_kernel}, while $\left\{ B_{\delta,n_{\delta}^{\bf x}}
\right\}$ is a family of
inscribed polygons. If $n_{\delta}\leq N$ for all $\delta$, then we have for $\Phi'(1)\neq 0$,
\begin{equation}\label{lowerB1}
1-\sigma_{n_{\delta}}^{i}({\bf x})
\geq L_{1}\cdot\sigma_{L,1}(\pi/N)/\pi,
\end{equation}
with
\begin{equation*}
L_{1}=\min_{\cos(\pi/N)\leq t\leq 1}\Phi'(t),\:\:
\sigma_{L,1}(s)=\frac{s}{2}+\frac{1}{4}\sin\left(2s
 \right)-\cos\left(s
 \right)
\log\left(\tan\left(\frac{s}{2} + \frac{\pi}{4}\right)\right),
\end{equation*}
and for $\Phi'(1)=0$, $\Phi''(1)\neq 0$,
\begin{equation}\label{lowerB2}
1-\sigma_{n_{\delta}}^{i}({\bf x})
\geq L_{2}\cdot\sigma_{L,2}(\pi/N)/\pi,
\end{equation}
with
\begin{equation*}
L_{2}=\min_{\cos(\pi/N)\leq t\leq 1}|\Phi''(t)|,\:\:
\sigma_{L,2}(s)=\frac{5}{8}\sin\left(2s
 \right)-\frac{s}{4}\cos\left(2s
 \right)-\cos\left(s
 \right)
\log\left(\tan\left(\frac{s}{2}+\frac{\pi}{4}\right)\right).
\end{equation*}
\end{theorem}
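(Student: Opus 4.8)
The plan is to rescale to the unit disk, rewrite $1-\sigma_{n_\delta}^i({\bf x})$ as the kernel-weighted area of the circular segments cut off by the inscribed polygon, and then bound the contribution of one sufficiently large segment from below in closed form. Passing to ${\boldsymbol\xi}=({\bf y}-{\bf x})/\delta$ via \cref{Affine_map} and \cref{rescaled_kernel}, and noting that $(y_i-x_i)^2$ is invariant under ${\bf y}\mapsto 2{\bf x}-{\bf y}$ while $B^{T}_{\delta,n_\delta^{\bf x}}$ is the reflection of $B_{\delta,n_\delta^{\bf x}}$, the two indicators in $\gamma_{\delta|n_\delta}$ contribute equally and $\sigma_{n_\delta}^i({\bf x})=\int_{B_{1,n_\delta^{\bf x}}({\bf 0})}\xi_i^2\gamma(|{\boldsymbol\xi}|_2)\,d{\boldsymbol\xi}$. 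Since the same integrand over all of $B_1({\bf 0})$ equals $\Phi(1)=1$ (cf. \cref{sigma_bound}), I obtain $1-\sigma_{n_\delta}^i({\bf x})=\int_{B_1({\bf 0})\setminus B_{1,n_\delta^{\bf x}}}\xi_i^2\gamma(|{\boldsymbol\xi}|_2)\,d{\boldsymbol\xi}$, whose nonnegative integrand justifies bounding below by a single segment.

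Because the polygon has at most $N$ sides, its vertices cut the unit circle into at most $N$ arcs, so by the pigeonhole principle at least one arc has central half-angle $\theta_0\ge\pi/N$. I would keep only this cap and then shrink it to the sub-cap $S$ of half-angle exactly $\pi/N$ about the same axis $\phi$: in polar coordinates $S=\{(\rho,\theta):|\theta-\phi|\le\pi/N,\ \cos(\pi/N)/\cos(\theta-\phi)\le\rho\le1\}$ is contained in the large cap and, decisively, satisfies $\rho\ge\cos(\pi/N)$ everywhere. Evaluating at half-angle $\pi/N$ directly removes any need for monotonicity in $\theta_0$, and confines $\rho$ to precisely the interval $[\cos(\pi/N),1]$ on which $L_1$ and $L_2$ are defined as minima.

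The last step decouples the radial and angular integrations. With $\Phi'(\rho)=\pi\rho^3\gamma(\rho)$ and $r(\theta)=\cos(\pi/N)/\cos(\theta-\phi)$, the inner integral is $\int_{r(\theta)}^1\rho^3\gamma\,d\rho=\tfrac1\pi\int_{r(\theta)}^1\Phi'(\rho)\,d\rho$. If $\Phi'(1)\ne0$ I use $\Phi'\ge L_1$ to get at least $\tfrac{L_1}{\pi}(1-r(\theta))$; if $\Phi'(1)=0$ then $\gamma(1)=0$ forces $\Phi''(1)<0$, so $\Phi'(\rho)=\int_\rho^1(-\Phi'')\ge L_2(1-\rho)$ and a further integration gives at least $\tfrac{L_2}{2\pi}(1-r(\theta))^2$. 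Substituting $\psi=\theta-\phi$ and choosing the least favorable axis $\phi$ (which replaces the angular weight by $\sin^2\psi$) leaves the elementary integrals $\tfrac12\int_{-\pi/N}^{\pi/N}\sin^2\psi\,\big(1-\cos(\pi/N)/\cos\psi\big)^k d\psi$, $k=1,2$. Using $\int\sec\psi\,d\psi=\log\tan(\psi/2+\pi/4)$ these reduce to $\sigma_{L,1}(\pi/N)$ and $\sigma_{L,2}(\pi/N)$ respectively (in fact the worst-orientation value carries a further factor that only strengthens the bound), which yields \cref{lowerB1} and \cref{lowerB2}.

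I expect the orientation bookkeeping to be the main obstacle: since the large cap may be positioned where $\xi_i$ is small, one must take the infimum over the placement angle $\phi$ rather than use a generic cap, and it is exactly this worst placement that converts the angular weight from $\cos^2\psi$ to $\sin^2\psi$ and produces the specific closed forms $\sigma_{L,k}$. The accompanying technical points are the evaluation of the secant-type integrals that generate the $\log\tan(\cdot)$ terms and the sign determination $\Phi''(1)<0$ (from $\gamma\ge0$, $\gamma(1)=0$) needed to turn $|\Phi''|\ge L_2$ into the one-sided radial estimate in the second case.
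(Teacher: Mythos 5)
Your proposal is correct and follows essentially the same route as the paper: reduce to the kernel-weighted circular segment over the largest arc (half-angle at least $\pi/N$ by pigeonhole), restrict to the sub-cap with radial limits $\cos(\pi/N)/\cos\theta$, take the worst orientation to obtain the $\sin^{2}$ angular weight, bound the radial integral via $L_{1}$ (mean value of $\Phi'$) or $L_{2}$ (second-order Taylor/integration of $-\Phi''$), and evaluate the same secant-type integrals yielding $\sigma_{L,1}$ and $\sigma_{L,2}$. The only cosmetic difference is that you use the exact reflection identity $\sigma_{n_{\delta}}^{i}=\int_{B_{1,n_{\delta}^{\bf x}}}\xi_{i}^{2}\gamma$ where the paper simply drops the reflected indicator to get $2(1-\sigma_{n_{\delta}}^{i})\geq\int_{C_{n_{\delta}^{\bf x}}}\xi_{i}^{2}\gamma$, which costs a harmless factor of two in the same direction.
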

\begin{proof}
Consider the section of the longest side in $B_{1,n_{\delta}^{\bf x}}({\bf 0})$, which is composed 
of a triangle and its adjoining cap. Denote the cap by $C_{n_{\delta}^{\bf x}}$, then $C_{n_{\delta}^{\bf x}}\subset B_{1}({\bf 0})\setminus
B_{1,n_{\delta}^{\bf x}}({\bf 0})$, and 
\begin{align*}
&2\left(1-\sigma_{n_{\delta}}^{i}({\bf x})\right)=2\int_{B_{\delta}({\bf x})}(y_{i}-x_{i})^{2}\gamma_{\delta}({\bf x},{\bf y})d{\bf y}
-2\int_{B_{\delta}({\bf x})}(y_{i}-x_{i})^{2}\gamma_{\delta|n_{\delta}}({\bf x},{\bf y})d{\bf y}\\
&=\int_{B_{\delta}({\bf x})}(y_{i}-x_{i})^{2}\gamma_{\delta}({\bf x},{\bf y})\left(1-\chi_{B_{\delta,n_{\delta}^{\bf x}}}({\bf y})+1-\chi_{B^{T}_{\delta,n_{\delta}^{\bf x}}}({\bf y})\right)d{\bf y}\\
&\geq\int_{B_{\delta}({\bf x})}(y_{i}-x_{i})^{2}\gamma_{\delta}({\bf x},{\bf y})\left(1-\chi_{B_{\delta,n_{\delta}^{\bf x}}}({\bf y})\right)d{\bf y}
\geq \int_{C_{n_{\delta}^{\bf x}}}\xi_{i}^{2}\gamma\left(\left\|{\boldsymbol \xi}\right\|_{2}\right)d{\boldsymbol \xi}.
\end{align*}
According to different orientations, the value of
\begin{equation*}
\frac{1}{2}\int_{C_{n_{\delta}^{\bf x}}}\xi_{i}^{2}\gamma\left(\left\|{\boldsymbol \xi}\right\|_{2}\right)d{\boldsymbol \xi}
\end{equation*}
may differ. However, we can estimate a lower bound for all cases. For $\Phi'(1)\neq 0$,
\begin{align*}
&\frac{1}{2}\int_{C_{n_{\delta}^{\bf x}}}\xi_{i}^{2}\gamma\left(\left\|{\boldsymbol \xi}\right\|_{2}\right)d{\boldsymbol \xi}\geq \int_{0}^{\frac{\pi}{N}}\sin^{2}(\theta)\int_{\frac{\cos(\pi/N)}{\cos(\theta)}}^{1}\rho^{3}\gamma(\rho)d\rho d\theta\\
&=\frac{1}{\pi}\int_{0}^{\frac{\pi}{N}}\sin^{2}(\theta)\left[\Phi(1)-
\Phi\left(\frac{\cos(\pi/N)}{\cos(\theta)}\right)\right] d\theta
\geq\frac{L_{1}}{\pi}\int_{0}^{\frac{\pi}{N}}\sin^{2}(\theta)\left[1-
\frac{\cos(\pi/N)}{\cos(\theta)}\right] d\theta\\
& =L_{1}\cdot\left[\frac{\pi}{2N}+\frac{1}{4}\sin\left(\frac{2\pi}{N}
 \right)-\cos\left(\frac{\pi}{N}
 \right)
\log\left(\tan\left(\frac{\pi}{2N} + \frac{\pi}{4}\right)\right)\right]/\pi,
\end{align*}
which is \cref{lowerB1}. Using the similar argument and notice that for $\Phi'(1)=0$, $\Phi''(1)\neq 0$
\begin{equation*}
\Phi(1)-\Phi\left(\frac{\cos(\pi/n_{\delta})}{\cos(\theta)}\right)\\
\geq \frac{L_{2}}{2}\left[1-
\frac{\cos(\pi/n_{\delta})}{\cos(\theta)}\right]^{2},
\end{equation*}
then \cref{lowerB2} is gotten. Thus, we complete the proof.
\end{proof}

Note that this estimate for the lower bound of $1-\sigma_{n_{\delta}}^{i}({\bf x})$ in \cref{Thm:Nonlocal-local-poly-lower} might not be sharp in some cases. However, 
it could still play a helpful role in assessing whether the nonlocal operators converge to the local operator.

\cref{Thm:Nonlocal-local-poly-lower} offers a quantitative characterization of the lack of convergence stated in the \cref{Thm:Notconv}. In fact, it indicates that if $n_{\delta}$ is uniformly bounded as $\delta\rightarrow 0$, then $\{u_{\delta|n_{\delta}}\}$ may converge to $\widetilde{u}_{0}$ only for the case of a trivial solution. 

Similar results for $1-\sigma_{n_{\delta}}^{i,1}({\bf x})$ could be provided in the same manner, we omit that to save space for the paper.

\subsection{Examples of the kernel function}\label{subsec:kernel}
Here we list some popular kernels in general $d$-dimensional setting under the assumption that $\widetilde{\gamma}_{\delta}$ has a re-scaled form \cref{rescaled_kernel} while replacing $\delta^{4}$ (for $d=2$) by $\delta^{d+2}$. For more discussions
on the effects of the kernels on the nonlocal models, we refer to \cite{du2019nonlocal,seleson2011role}.

\textit{Type} 1. Constant kernel:
\begin{equation}\label{Con_Kernel_d}
\gamma(\rho)=\frac{d(d+2)}{w_{d}},\:\: 0\leq\rho\leq 1.
\end{equation}

\textit{Type} 2. Linear kernel:
\begin{equation*}
\gamma(\rho)=\frac{d(d+2)(d+3)}{w_{d}}(1-\rho),\:\: 0\leq\rho\leq 1.
\end{equation*}

\textit{Type} 3. Gaussian-like kernel:
\begin{equation*}
\gamma(\rho)=\frac{d}{C_{e}w_{d}}e^{-\rho^2},\:\: 0\leq\rho\leq
1,\:\: \mbox{with}\:\:  C_{e}=\int_{0}^{1}\tau^{d+1}e^{-\tau^2}d\tau.
\end{equation*}

\textit{Type} 4. Peridynamic kernel \cite{silling2005meshfree} for $d\geq 2$:
\begin{equation*}
\gamma(\rho)=\frac{d(d+1)}{w_{d}}\rho^{-1},\:\: 0< \rho\leq 1.
\end{equation*}

All four types of kernels shown above satisfy \cref{kernel_finite}, \cref{General_kernel}
and \cref{K_delta_est}. The  functions $\Phi(t)$ and $\Psi(t)$ corresponding to the linear
kernel (\textit{Type} 2) satisfy the condition
\begin{equation*}
\Phi'(1)=0, \:\: \Psi'(1)=0, \:\: \Phi''(1)\neq 0, \:\: \Psi'(1)\neq 0,
\end{equation*}
hence $\lambda=4$. The other three types of kernels satisfy
\begin{equation*}
\Phi'(1)\neq 0, \:\: \Psi'(1)\neq 0,
\end{equation*}
which leads to $\lambda=2$.

Note that our discussion so far involves only approximations of nonlocal operators on the continuum level and we have not incorporated the effect of finite dimensional discretizations of the operators. Next, we take the conforming DG (CDG) method as a representative setting to study the discretization error. Extensions to other types of numerical methods can be worked out analogously.

\section{Error estimate of the linear CDG method for the nonlocal problems
 with respect to the horizon parameter and the mesh size}\label{sec:Conv_h}
In this section, we review the linear CDG method proposed in \cite{du2019conforming} for
solving nonlocal problems with integrable kernels. We then study the corresponding discretization error with respect to the horizon parameter and the mesh size.
Note that in \cite{d2021cookbook} (where the continuous piecewise linear FEM is used) and \cite{du2019conforming}, the analysis is only concerned with the dependence on the mesh size.

\subsection{Brief review of the linear CDG method for nonlocal problems}\label{subsec:DG}
Now we briefly recall the basic steps
of the linear CDG method for solving the nonlocal problems \cref{nonlocal_diffusion} to make the discussion reasonably self-contained. Firstly, the variational form of \cref{nonlocal_diffusion} finds $u_{\delta}\in V(\Omdh)$ such that
\begin{equation}\label{nonlocal_var}
-\int_{\Omega}w({\bf x})\mathcal{L}_{\delta}u_{\delta}({\bf x})d{\bf x}=\int_{\Omega}f_{\delta}({\bf x})w({\bf x})d{\bf x},\: \forall w \in V^{0}(\Omdh).
\end{equation}

For a given triangulation $\mathcal{T}_{\delta}^{h}$ of $\Omdh$ that simultaneously
triangulates $\Omega$ (we call such a triangulation consistent), let 
\begin{equation*}
\Omega_{\delta}^{h}=\mathcal{T}_{\delta}^{h}\cap\overline{\Omega},\:\:
\Omega^{c,h}_{\delta}=\mathcal{T}_{\delta}^{h}\cap\overline{\Omdc}.
\end{equation*}
Assume that for any fixed $\delta$, $\mathcal{T}_{\delta}^{h}$
is quasi-uniform \cite{brenner2007mathematical} with respect to the mesh size $h$. The set of inner and boundary nodes of $\Omega_{\delta}^{h}$ are denoted by 
\begin{equation*}
N\!I=\{{\bf x}_j: j=1,\cdots, n_{1}\},\:\: \mbox{and}\:\: N\!B=\{{\bf x}_{n_{1}+j}: j=1,\cdots, n_{2}\},
\end{equation*}
respectively. The set of all nodes in $\Omega_{\delta}^{c,h}$ is denoted by
\begin{equation*}
N\!C=\{{\bf x}_{n_{1}+j}: j=1,\cdots, n_{2}+p\}.
\end{equation*}
Note that $N\!B \subset N\!C$. The continuous linear basis functions
defined on $\mathcal{T}_{\delta}^{h}$ are denoted by 
\begin{equation*}
\phi_{j}({\bf x}),\:\:j=1,2,\cdots, n_{1}+n_{2}+p.
\end{equation*}

The basis functions of $V_{\delta}^{0,h}$ which contains all piecewise linear
functions vanishing on $\mathcal{T}_{\delta}^{h}\!\setminus\!\Omega_{\delta}^{h}$ are defined as follows: for $j=1,2,\cdots,n_{1}+n_{2}$,
\begin{equation*}
     \widetilde{\phi}_{j}({\bf x})=\left \{
     \begin{array}{ll}
     \phi_{j}({\bf x})|_{\Omega_{\delta}^{h}}, & {\bf x}\in \Omega_{\delta}^{h},\\
     0, & {\bf x}\in \mathcal{T}_{\delta}^{h}\!\setminus\!\Omega_{\delta}^{h}.
     \end{array}
     \right .
\end{equation*}
The basis functions of $V^{c,h}_{\delta}$ which contains all piecewise linear
functions vanishing on $\mathcal{T}_{\delta}^{h}\!\setminus\!\Omega_{\delta}^{c,h}$ are
defined as follows: for $j=1,2,\cdots,p+n_{2}$,
\begin{equation*}
     \widetilde{\phi}_{j}^{c}({\bf x})=\left \{
     \begin{array}{ll}
     \phi_{n_{1}+j}({\bf x})|_{\Omega_{\delta}^{c,h}}, & {\bf x}\in \Omega_{\delta}^{c,h},\\
     0, & {\bf x}\in \mathcal{T}_{\delta}^{h}\!\setminus\!\Omega_{\delta}^{c,h}.
     \end{array}
     \right .
\end{equation*}
The linear CDG space is then defined as 
\begin{equation*}
V_{\delta}^{h}=V_{\delta}^{0,h}+g_{\delta}^{h},
\end{equation*}
where $g_{\delta}^{h}\in V^{c,h}_{\delta}$ is an approximation of the boundary data $g_{\delta}$.

The following \emph{conforming} property is satisfied, 
\begin{equation}\label{Set_conf}
V_{\delta}^{0,h}\subset V^{0}(\Omdh),\:\:\: V^{c,h}_{\delta}\subset V^{c}(\Omdh).
\end{equation}
An element of $V_{\delta}^{h}$ is continuous on $\Omega$ or $\Omdc$, but 
possibly discontinuous across $\partial\Omega$. The linear CDG approximation of \cref{nonlocal_var} finds $u_{\delta}^{0,h}\in V_{\delta}^{0,h}$ such that $\forall w_{h}\in V^{0,h}_{\delta}$,
\begin{align}\label{nonlocal_fem_exact}
-2\int_{\Omega}w_{h}({\bf x})\int_{B_{\delta}({\bf x})\cap\Omega}\left(u_{\delta}^{0,h}({\bf y})-u_{\delta}^{0,h}({\bf x})\right)\gamma_{\delta}({\bf x},{\bf y})d{\bf y}d{\bf x}\qquad\\
=\int_{\Omega}f_{\delta}({\bf x})w_h({\bf x})d{\bf x}+2\int_{\Omega}w_{h}({\bf x})\int_{B_{\delta}({\bf x})\cap\Omdc}\left(g_{\delta}^{h}({\bf y})-u_{\delta}^{0,h}({\bf x})\right)\gamma_{\delta}({\bf x},{\bf y})d{\bf y}d{\bf x}.\nonumber
\end{align}
Put together, we get $u_{\delta}^{h}=u_{\delta}^{0,h}+g_{\delta}^{h}$ as the final CDG approximation of $u_{\delta}$ on $\Omdh$.

In fact, the triangulations for $\Omdh$ are not required to
be consistent across the interface, since the continuity across $\partial\Omega$ of functions in the linear CDG space is not enforced (and there is no reason to assume the
continuity a priori). So $\overline{\Omega}$ and $\overline{\Omdc}$ could be triangulated separately by different mesh sizes (without any restriction when crossing the boundary) to get
$\Omega_{\delta}^{h}$ and $\Omega^{c,H}_{\delta}$. This offers more flexibility to implement compared to the use of consistent meshing. Hence a whole triangulation of $\Omdh$ is given as
\begin{equation*}
\mathcal{T}_{\delta}^{h,H}=\Omega_{\delta}^{h}\cup\Omega^{c,H}_{\delta},
\end{equation*}
which is called a non-consistent triangulation of $\Omdh$. Thus we define 
\begin{equation*}
V_{\delta}^{h,H}=V_{\delta}^{0,h}+g_{\delta}^{H},
\end{equation*}
where
$g_{\delta}^{H}\in V^{c,H}_{\delta}$ is an approximation of $g_{\delta}$. In this case,
the corresponding linear CDG approximation for \cref{nonlocal_var} finds
$u_{\delta}^{0,h}\in V_{\delta}^{0,h}$ such that \cref{nonlocal_fem_exact} (replacing $g_{\delta}^{h}$ by $g_{\delta}^{H}$) holds for any $w_{h}\in V^{0,h}_{\delta}$. 
And $u_{\delta}^{h,H}=u_{\delta}^{0,h}+g_{\delta}^{H}$ is the final CDG approximation of $u_{\delta}$. We assume that $\max\{h,H\}<\delta$ for convenience, which is particularly beneficial when dealing with issues related to VC, because 
$H<\delta$ can help avoid potential complications in describing the VC discretization. 

\subsection{The CDG approximation for the nonlocal problem
with spherical interaction neighborhoods}\label{subsec:Err_ud}
In \cite{du2019conforming}, the linear CDG method has been shown to yield an optimal
convergence rate with respect to the mesh size for some integrable kernels for fixed $\delta$. However,
the dependence of error estimate on $\delta$ has not been discussed so far. To analyze this dependence, we begin with the following lemma.
\begin{lemma}\label{Lemma:Norm_Equiv}
\cite{aksoylu2010results,Ponce2004An} Assume that the family of kernels $\{\gamma_{\delta}\}$ satisfies
the conditions \cref{kernel_finite} and \cref{General_kernel}. Then there exist
constants $\delta_{0}>0$, $C_{5}>0$ and $C_{6}>0$ such that for all $0<\delta\leq\delta_{0}$
and $v\in V^{0}(\Omdh)$, it holds
\begin{equation}\label{Norm_Equiv}
C_{5}\left\|v\right\|_{L^{2}(\Omega)}\leq \left\|v\right\|_{\delta}\leq C_{6}\delta^{-1}\left\|v\right\|_{L^{2}(\Omega)},
\end{equation}
where the constants $C_{5}$ and $C_{6}$ are all independent of $\delta$.
\end{lemma}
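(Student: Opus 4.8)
The plan is to prove the two inequalities by entirely different mechanisms: the right-hand (boundedness) bound is elementary, while the left-hand (Poincar\'e-type) bound, with its $\delta$-uniform constant, carries all the difficulty. For the upper bound I would expand the square in the definition of $\|\cdot\|_{\delta}$ and use $(v(\mathbf{y})-v(\mathbf{x}))^{2}\le 2v(\mathbf{y})^{2}+2v(\mathbf{x})^{2}$ together with the symmetry of $\gamma_{\delta}$ to obtain
\[
\|v\|_{\delta}^{2}\le 4\int_{\Omdh} v(\mathbf{x})^{2}\Big(\int_{\Omdh}\gamma_{\delta}(\mathbf{x},\mathbf{y})\,d\mathbf{y}\Big)\,d\mathbf{x}\le 4\,G(\gamma_{\delta})\,\|v\|_{L^{2}(\Omega)}^{2},
\]
where the last step uses that $v\in V^{0}(\Omdh)$ vanishes on $\Omdc$ and that the inner integral is the zeroth moment $\int_{B_{\delta}(\mathbf{x})}\gamma_{\delta}(\mathbf{x},\mathbf{y})\,d\mathbf{y}$. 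A change of variables $s=\delta\rho$ in $w_{d}\int_{0}^{\delta}s^{d-1}\widetilde{\gamma}_{\delta}(s)\,ds$ (cf. \cref{rescaled_kernel}) shows this moment is $O(\delta^{-2})$, i.e. $G(\gamma_{\delta})\lesssim\delta^{-2}$ as in \cref{K_delta_est}, yielding $\|v\|_{\delta}\le C_{6}\delta^{-1}\|v\|_{L^{2}(\Omega)}$ with $C_{6}$ independent of $\delta$.

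The substantive part is the lower bound $C_{5}\|v\|_{L^{2}(\Omega)}\le\|v\|_{\delta}$ with $C_{5}$ independent of $\delta$, which is a nonlocal Poincar\'e inequality that must be uniform as $\delta\to0$. I would argue by contradiction through the Bourgain--Brezis--Mironescu/Ponce compactness framework. Suppose no such $C_{5}$ exists; then there are $\delta_{n}$ and $v_{n}\in V^{0}(\widehat{\Omega}_{\delta_{n}})$ with $\|v_{n}\|_{L^{2}(\Omega)}=1$ and $\|v_{n}\|_{\delta_{n}}\to0$. If the $\delta_{n}$ stay bounded away from $0$, the strict positivity of the kernel on $B_{\delta}(\mathbf{x})$ in \cref{General_kernel} and the connectedness of $\Omega$ give a fixed-$\delta$ Poincar\'e inequality and an immediate contradiction, so the essential regime is $\delta_{n}\to0$. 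There, the normalization in \cref{General_kernel} makes the rescaled radial profiles an admissible family of mollifiers with unit second moment, and $\sup_{n}\|v_{n}\|_{\delta_{n}}<\infty$ together with $\|v_{n}\|_{L^{2}(\Omega)}=1$ places $\{v_{n}\}$ in the hypotheses of the BBM compactness theorem. Passing to a subsequence, $v_{n}\to v$ in $L^{2}(\Omega)$ with $\|v\|_{L^{2}(\Omega)}=1$, while the associated lower-semicontinuity inequality of the nonlocal-to-local limit gives $v\in H^{1}(\Omega)$ and $c\,\|\nabla v\|_{L^{2}(\Omega)}^{2}\le\liminf_{n}\|v_{n}\|_{\delta_{n}}^{2}=0$. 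Since each $v_{n}$ vanishes on the collar $\Omega_{\delta_{n}}^{c}$ of width $\delta_{n}$, the limit in fact lies in $H^{1}_{0}(\Omega)$, so $\nabla v=0$ forces $v=0$, contradicting $\|v\|_{L^{2}(\Omega)}=1$.

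The hard part will be making this compactness argument fully rigorous in the $\delta$-dependent setting: specifically (i) verifying the BBM compactness hypotheses uniformly along the sequence of shrinking kernels, and (ii) showing that the $L^{2}$-limit belongs to $H^{1}_{0}(\Omega)$ rather than merely $H^{1}(\Omega)$, which is exactly where the vanishing of $v_{n}$ on the shrinking volume-constraint region $\Omega_{\delta_{n}}^{c}$ must be exploited. Both ingredients are precisely those developed in \cite{aksoylu2010results,Ponce2004An}, so the clean and short route for the paper is to invoke their results directly; the contradiction argument above records the underlying mechanism and explains why the constant $C_{5}$ can be taken independent of $\delta$.
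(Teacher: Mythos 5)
The paper does not actually prove \cref{Lemma:Norm_Equiv}: it is quoted from \cite{aksoylu2010results,Ponce2004An}, and the only accompanying argument is the remark that the form stated there (with the explicit powers $\delta^{d/2+1}$ and $\delta^{d/2}$ under the normalization $\widetilde{\gamma}_{\delta}\equiv 1$) rescales into \cref{Norm_Equiv} under the second-moment normalization in \cref{General_kernel}. Your proposal therefore goes well beyond the paper's treatment by reconstructing the underlying argument, and the reconstruction is the right one: the elementary expansion $(v({\bf y})-v({\bf x}))^{2}\le 2v({\bf y})^{2}+2v({\bf x})^{2}$ plus the zeroth-moment bound for the upper inequality, and the Bourgain--Brezis--Mironescu/Ponce compactness-and-contradiction scheme for the $\delta$-uniform Poincar\'{e} inequality. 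The identification of the BBM functional with $\|\cdot\|_{\delta}^{2}$ via the second-moment normalization is exactly the mechanism that makes $C_{5}$ independent of $\delta$, and your two flagged technical points are the genuine ones; note that the second (membership of the limit in $H^{1}_{0}(\Omega)$) is in fact easy here, since each $v_{n}$ extends by zero to $\mathbb{R}^{d}$ without changing either norm (the kernel vanishes for $|{\bf x}-{\bf y}|\ge\delta$, so no new pairs contribute), whence the limit is a constant on the connected set $\mathbb{R}^{d}$ vanishing outside $\Omega$, hence zero.

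One caveat you should make explicit: your upper bound silently uses more than the stated hypotheses. The lemma assumes only \cref{kernel_finite} and \cref{General_kernel}, but the bound $\int_{B_{\delta}({\bf x})}\gamma_{\delta}({\bf x},{\bf y})\,d{\bf y}\lesssim\delta^{-2}$ does not follow from these alone --- a kernel whose mass concentrates near $s=0$ can have a zeroth moment far exceeding $\delta^{-2}$ while keeping its second moment normalized. The change of variables you invoke requires the rescaled form \cref{rescaled_kernel} (under which $\gamma\in L^{1}(0,1)$ and $d\ge 1$ do give the $O(\delta^{-2})$ bound), or alternatively the separate assumption \cref{K_delta_est}. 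Since the paper adopts \cref{rescaled_kernel} as a standing simplification and assumes \cref{K_delta_est} wherever the lemma is applied, this is harmless in context, but a self-contained proof should state which of the two it is relying on rather than deriving the moment bound from \cref{General_kernel}.
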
 

Notice that the original lemma in \cite{aksoylu2010results} states that for general $d$ dimensional setting
\begin{equation*}
\widetilde{C}_{5}\delta^{d/2+1}\left\|v\right\|_{L^{2}(\Omega)}\leq \left\|v\right\|_{\delta}\leq
\widetilde{C}_{6}\delta^{d/2}\left\|v\right\|_{L^{2}(\Omega)},
\end{equation*}
under the assumption $\widetilde{\gamma}_{\delta}(\left\|{\bf z}\right\|_{2})=1$ instead of the last condition in \cref{General_kernel}, however, the two lemmas are equivalent.
The corresponding vector-valued version of the first inequality of \cref{Norm_Equiv}
is proven in \cite[Proposition 5.3]{mengesha2014bond}.

As stated in \cite[Remark 2.5]{aksoylu2010results}, the first inequality of \cref{Norm_Equiv} can be extended, as done in \cite[Proposition 2.5]{andreu2009nonlocal}, to the estimate
\begin{equation}\label{Coer_Nonhomo}
\widehat{C}_{5}\left\|v\right\|_{L^{2}\left(\Omdh\right)}\leq \left\|v\right\|_{\delta}+\int_{S_{-1}}|v({\bf x})|^{2}d{\bf x},
\end{equation}
for functions $v\in L^{2}(\Omdh)$ which do not necessarily vanish on $\Omdc$, where
\begin{equation*}
S_{-1}:=\{{\bf x}\in\Omdc: \mbox{dist}({\bf x},\partial\Omdh)\leq \delta/2\},
\end{equation*}
while the first inequality of \cref{Norm_Equiv} can be deduced from \cref{Coer_Nonhomo}
with the homogeneous Dirichlet condition on $\Omdc$ assumed. This result also coincides with the classical Poincar\'{e}'s inequality.

Similar to the proof for the error estimate in the $\|\cdot\|_{\delta}$ norm in \cref{Thm:Nonlocal-local-stand}, we can derive the relationship between the two norms $\left\|v\right\|_{\delta}$ and $\left\|v\right\|_{C(\Omega)}+\left\|v\right\|_{C(\Omdc)}$.
\begin{lemma}\label{Lemma:Norm_Equiv_inf}
Assume that the family of kernels $\{\gamma_{\delta}\}$ satisfies \cref{kernel_finite}, \cref{General_kernel} and \cref{K_delta_est}. Then
\begin{equation*}
\left\|v\right\|_{\delta}
\lesssim \delta^{-1}(\|v\|_{C(\Omega)}+\|v\|_{C(\Omdc)}), \quad \forall  v\in C_{b}(\Omega)\cap C_{b}(\Omdc).
\end{equation*}
\end{lemma}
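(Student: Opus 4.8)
The plan is to close the estimate with the crudest possible pointwise bound on the difference quotient, exploiting the fact that $v$ is merely bounded on $\Omdh$, and then to absorb the remaining singularity through the kernel-mass bound \cref{K_delta_est}. This is the same mechanism that closes the $\|\cdot\|_{\delta}$ estimate in \cref{Thm:Nonlocal-local-stand}, where $G(\gamma_{\delta})\lesssim\delta^{-2}$ supplies the singular factor $\delta^{-2}$; the difference is that here there is no smooth structure to exploit, so the crude constant bound is all that is available and, fortunately, all that is needed.

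First I would set $M:=\|v\|_{C(\Omega)}+\|v\|_{C(\Omdc)}$ and observe that, since $\Omdh=\Omega\cup\Omdc$, we have $|v({\bf x})|\le M$ for a.e.\ ${\bf x}\in\Omdh$, so that $\left(v({\bf y})-v({\bf x})\right)^{2}\le 4M^{2}$ for every pair $({\bf x},{\bf y})\in\Omdh\times\Omdh$. Inserting this into the definition of $\|v\|_{\delta}$ and pulling the constant out gives
\begin{equation*}
\|v\|_{\delta}^{2}\le 4M^{2}\int_{\Omdh}\left(\int_{\Omdh}\gamma_{\delta}({\bf x},{\bf y})\,d{\bf y}\right)d{\bf x}.
\end{equation*}
By the compact-support property \cref{kernel_finite}, the inner integral is effectively over $B_{\delta}({\bf x})\cap\Omdh$, hence bounded by $G(\gamma_{\delta})\lesssim\delta^{-2}$ thanks to \cref{K_delta_est}. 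Since $|\Omdh|=|\Omega|+|\Omdc|$ with $|\Omdc|\lesssim\delta$ uniformly bounded for $\delta\le\delta_{0}$, the outer integral contributes only a bounded constant, whence $\|v\|_{\delta}^{2}\lesssim M^{2}\delta^{-2}$; taking square roots yields the claim.

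The argument carries no genuine obstacle; the only points requiring care are checking that $|\Omdh|$ is bounded uniformly in $\delta$ (so that the volume factor does not interfere with the power of $\delta$) and invoking \cref{K_delta_est} correctly for the pointwise kernel mass. It is worth emphasising that the proof uses solely the boundedness of $v$ on each of $\Omega$ and $\Omdc$ separately; the possible discontinuity of $v$ across $\partial\Omega$ plays no role whatsoever, which is precisely why the estimate remains valid for functions in the CDG space, where such interface discontinuities are permitted.
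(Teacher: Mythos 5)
Your proof is correct and uses essentially the mechanism the paper indicates for this lemma (which it states without a written proof, referring to the energy-norm computation in \cref{Thm:Nonlocal-local-stand}): the crude pointwise bound $\left(v({\bf y})-v({\bf x})\right)^{2}\le 4M^{2}$ combined with the kernel-mass bound $G(\gamma_{\delta})\lesssim\delta^{-2}$ from \cref{K_delta_est} and the uniform boundedness of $|\Omdh|$. Nothing further is needed.
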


To separate the influence due to the
discretization of VC, we introduce an intermediate problem which finds 
\begin{equation*}
u_{\delta}^{*}\in V^{0}(\Omdh)+g_{\delta}^{h}
\end{equation*}
such that
\begin{equation}\label{nonlocal_diffusion_interm}
-\mathcal{L}_{\delta}u_{\delta}^{*}({\bf x})=f_{\delta}({\bf x})\:\: \mbox{on}\:\: \Omega.
\end{equation}

\begin{theorem}\label{Thm:Convergence_ud}
Assume that the conditions of \cref{Thm:Nonlocal-local-stand} hold. Let $u_{\delta}$ and
$u_{\delta}^{h}$ be the solutions of
\cref{nonlocal_var} and \cref{nonlocal_fem_exact}, respectively.
Then
\begin{align}
\left\|u_{\delta}-u_{\delta}^{h}\right\|_{\delta}\lesssim \delta^{(3+\mu)/2}+\delta^{-1}h^{2},\label{Error_Numer_del}\\
\left\|u_{\delta}-u_{\delta}^{h}\right\|_{L^{2}(\Omega)}\lesssim \delta^{(3+\mu)/2}+\delta^{-1}h^{2}.\label{Error_Numer}
\end{align}
\end{theorem}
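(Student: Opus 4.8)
The plan is to insert the intermediate solution $u_{\delta}^{*}$ defined by \cref{nonlocal_diffusion_interm} and split the error through the triangle inequality
\[
\left\|u_{\delta}-u_{\delta}^{h}\right\|_{\delta}\le\left\|u_{\delta}-u_{\delta}^{*}\right\|_{\delta}+\left\|u_{\delta}^{*}-u_{\delta}^{h}\right\|_{\delta},
\]
so that the first term isolates the error due to discretizing the volume constraint (the interior equation is identical for $u_{\delta}$ and $u_{\delta}^{*}$), while the second is a genuine Galerkin finite element error for a problem whose boundary data $g_{\delta}^{h}$ already matches that of $u_{\delta}^{h}$; in particular $u_{\delta}^{*}-u_{\delta}^{h}\in V^{0}(\Omdh)$, which will be convenient for the $L^{2}$ estimate.

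For the first term, I would note that $w:=u_{\delta}-u_{\delta}^{*}$ satisfies $-\mathcal{L}_{\delta}w=0$ on $\Omega$ and $w=g_{\delta}-g_{\delta}^{h}$ on $\Omdc$, where $\|g_{\delta}-g_{\delta}^{h}\|_{C(\Omdc)}\lesssim h^{2}$ by standard nodal interpolation. First I would invoke the nonlocal maximum principle (exactly as in the proof of \cref{Thm:Nonlocal-local-stand}) to obtain $\|w\|_{C(\Omega)}\lesssim h^{2}$, and then reproduce the energy identity used there: since $\mathcal{L}_{\delta}w$ vanishes on $\Omega$, only the boundary integral over $\Omdc$ survives, and bounding it with $|\Omdc|\approx\delta$ and $G(\gamma_{\delta})\lesssim\delta^{-2}$ from \cref{K_delta_est} yields $\|w\|_{\delta}^{2}\lesssim\delta\cdot h^{2}\cdot h^{2}\cdot\delta^{-2}$, i.e. $\|u_{\delta}-u_{\delta}^{*}\|_{\delta}\lesssim\delta^{-1/2}h^{2}$.

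For the second term I would use that the CDG bilinear form in \cref{nonlocal_fem_exact} is precisely the energy inner product $(\cdot,\cdot)_{\delta}$, so the discrete solution is the $\|\cdot\|_{\delta}$-orthogonal projection and Cea's lemma holds with constant one; choosing the comparison function $I_{h}\widetilde{u}_{0}$ and inserting $u_{\delta}$ and $\widetilde{u}_{0}$ by the triangle inequality reduces this term to $\|u_{\delta}^{*}-u_{\delta}\|_{\delta}+\|u_{\delta}-\widetilde{u}_{0}\|_{\delta}+\|\widetilde{u}_{0}-I_{h}\widetilde{u}_{0}\|_{\delta}$. The first summand is controlled above, the second is $O\bigl(\delta^{(3+\mu)/2}\bigr)$ by \cref{Thm:Nonlocal-local-stand}, and the interpolation term is converted from the sup norm to the energy norm via \cref{Lemma:Norm_Equiv_inf}, giving $\|\widetilde{u}_{0}-I_{h}\widetilde{u}_{0}\|_{\delta}\lesssim\delta^{-1}\|\widetilde{u}_{0}-I_{h}\widetilde{u}_{0}\|_{C(\Omdh)}\lesssim\delta^{-1}h^{2}$. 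Collecting terms and absorbing $\delta^{-1/2}h^{2}$ into $\delta^{-1}h^{2}$ for small $\delta$ yields \cref{Error_Numer_del}.

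Finally, for the $L^{2}$ estimate \cref{Error_Numer} I would split once more: since $u_{\delta}^{*}-u_{\delta}^{h}\in V^{0}(\Omdh)$, the Poincar\'{e}-type bound given by the first inequality of \cref{Norm_Equiv} controls its $L^{2}(\Omega)$ norm by its energy norm, already bounded by $\delta^{(3+\mu)/2}+\delta^{-1}h^{2}$; the remaining part $u_{\delta}-u_{\delta}^{*}$ is bounded in $C(\Omega)$ by $h^{2}$ through the maximum principle used above, hence also in $L^{2}(\Omega)$. The main obstacle is the careful bookkeeping of the negative powers of $\delta$ introduced by the two norm equivalences (\cref{Norm_Equiv,Lemma:Norm_Equiv_inf}) together with the need for the intermediate problem $u_{\delta}^{*}$ to decouple the volume-constraint error from the Galerkin error; note that, lacking an elliptic-smoothing/Aubin--Nitsche argument in the nonlocal setting, the $L^{2}$ rate obtained here is only as good as the energy rate and is not expected to be sharp.
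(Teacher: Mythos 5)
Your proposal is correct and follows essentially the same route as the paper's proof: the same intermediate problem $u_{\delta}^{*}$, the same maximum-principle plus Green's-identity bound $\|u_{\delta}-u_{\delta}^{*}\|_{\delta}\lesssim\delta^{-1/2}h^{2}$, the same Galerkin orthogonality with comparison function $I_{h}\widetilde{u}_{0}$ handled via \cref{Lemma:Norm_Equiv_inf} and \cref{Thm:Nonlocal-local-stand}, and the same $L^{2}$ argument through \cref{Lemma:Norm_Equiv}. No gaps.
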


\begin{proof}
By \cref{nonlocal_diffusion} and \cref{nonlocal_diffusion_interm} one has
\begin{equation}\label{Homog-Ralation}
\mathcal{L}_{\delta}\left(u_{\delta}-u_{\delta}^{*}\right)({\bf x})=0,\: {\bf x}\in \Omega.
\end{equation}
The direct application of nonlocal maximum principle to \cref{Homog-Ralation} produces
\begin{equation}\label{aux_result}
\left\|u_{\delta}-u_{\delta}^{*}\right\|_{C(\Omega)}\lesssim \left\|g_{\delta}-g_{\delta}^{h}\right\|_{C(\Omdc)}
\lesssim h^{2}\left\|g_{\delta}\right\|_{C^{2}\left(\Omdc\right)}.
\end{equation}
By the generalized nonlocal Green's first identity (\cite{du2013nonlocal}) and \cref{Homog-Ralation} we get
\begin{align*}
&\left\|u_{\delta}-u_{\delta}^{*}\right\|_{\delta}^{2}=\!\int_{\Omdc}\!(u_{\delta}^{*}({\bf x})-u_{\delta}({\bf x}))\int_{\Omdh}\!\left(u_{\delta}({\bf y})-u_{\delta}^{*}({\bf y})-u_{\delta}({\bf x})+u_{\delta}^{*}({\bf x})\right)\gamma_{\delta}({\bf x},{\bf y})d{\bf y}d{\bf x}\nonumber\\
&=\int_{\Omdc}(g_{\delta}^{h}({\bf x})-g_{\delta}({\bf x}))\int_{\Omdh}\left(u_{\delta}({\bf y})-u_{\delta}^{*}({\bf y})-g_{\delta}({\bf x})+g_{\delta}^{h}({\bf x})\right)\gamma_{\delta}({\bf x},{\bf y})d{\bf y}d{\bf x}\nonumber\\
&\lesssim |\Omdc|\cdot\left\|g_{\delta}-g_{\delta}^{h}\right\|_{C(\Omdc)}\cdot\left(\left\|u_{\delta}-u_{\delta}^{*}\right\|_{C(\Omega)}+\left\|g_{\delta}-g_{\delta}^{h}\right\|_{C(\Omdc)}
\right)\cdot G(\gamma_{\delta})\nonumber\\
&\lesssim \delta\cdot h^{2}\left\|g_{\delta}\right\|_{C^{2}\left(\Omdc\right)}\cdot
h^{2}\left\|g_{\delta}\right\|_{C^{2}\left(\Omdc\right)}\cdot\delta^{-2}\approx \delta^{-1}h^{4}.
\end{align*}
Thus
\begin{equation}\label{aux_result_deltanorm}
\left\|u_{\delta}-u_{\delta}^{*}\right\|_{\delta}\lesssim\delta^{-1/2}h^{2}.
\end{equation}
Since $V_{\delta}^{0,h}\subset V^{0}(\Omdh)$ as pointed out in \cref{Set_conf}, then for all $w_{h} \in V_{\delta}^{0,h}$, it holds
\begin{equation*}
-\int_{\Omega}w_{h}({\bf x})\mathcal{L}_{\delta}u_{\delta}^{*}({\bf x})d{\bf x}=\int_{\Omega}w_{h}({\bf x})f_{\delta}({\bf x})d{\bf x},
\end{equation*}
which, together with \cref{nonlocal_fem_exact} and the nonlocal Green's first identity \cite{du2013nonlocal} leads to
\begin{equation*}
\left(u_{\delta}^{*}-u_{\delta}^{h},w_h\right)_{\delta}=0,\:\forall w_h \in V_{\delta}^{0,h}.
\end{equation*}
Thus we get the following estimate: for all $v_h \in V_{\delta}^{h}$,
\begin{equation*}
\left\|u_{\delta}^{*}-u_{\delta}^{h}\right\|_{\delta}^2 =\left(u_{\delta}^{*}-u_{\delta}^{h},u_{\delta}^{*}-u_{\delta}^{h}\right)_{\delta}
=\left(u_{\delta}^{*}-u_{\delta}^{h},u_{\delta}^{*}-v_h\right)_{\delta}
\leq \left\|u_{\delta}^{*}-u_{\delta}^{h}\right\|_{\delta} \left\|u_{\delta}^{*}-v_h\right\|_{\delta},
\end{equation*}
where the crucial relation $u_{\delta}^{h}-v_h\in V_{\delta}^{0,h}$ is used.
Then
\begin{equation}\label{cea_Euclidean}
\left\|u_{\delta}^{*}-u_{\delta}^{h}\right\|_{\delta}\leq\left\|u_{\delta}^{*}-v_h\right\|_{\delta}, \: \forall
v_h\in V_{\delta}^{h}.
\end{equation}
Let $v_h=I_{h}\widetilde{u}_{0}\in V_{\delta}^{h}$ be the piecewise linear interpolant of
$\widetilde{u}_{0}$ on $\mathcal{T}_{\delta}^{h}$.
By \cref{Conv_Stand}, \cref{Lemma:Norm_Equiv_inf}, \cref{aux_result_deltanorm}, and the error estimate for the Lagrangian interpolation, we have
\begin{align}
\left\|u_{\delta}^{*}-u_{\delta}^{h}\right\|_{\delta}
&\leq\left\|u_{\delta}^{*}-I_{h}\widetilde{u}_{0}\right\|_{\delta}\label{udstar-h}\\
&\leq \left\|\widetilde{u}_{0}-I_{h}\widetilde{u}_{0}\right\|_{\delta}+\left\|u_{\delta}^{*}-u_{\delta}\right\|_{\delta}+
\left\|u_{\delta}-\widetilde{u}_{0}\right\|_{\delta}\nonumber\\
&\leq C\delta^{-1}\left\|\widetilde{u}_{0}-I_{h}\widetilde{u}_{0}\right\|_{C\left(\Omdh\right)}
+C\delta^{-1/2}h^{2}+C\delta^{(3+\mu)/2}\nonumber\\
&\leq C\delta^{-1}h^{2}\left\|\widetilde{u}_{0}\right\|_{C^{2}\left(\Omdh\right)}+C\delta^{-1/2}h^{2}
+C\delta^{(3+\mu)/2}\nonumber\\
&\lesssim \delta^{(3+\mu)/2}+\delta^{-1}h^{2}.\nonumber
\end{align}
This, together with \cref{aux_result_deltanorm}, leads to \cref{Error_Numer_del}.

Since $u_{\delta}^{*}-u_{\delta}^{h}\in V^{0}(\Omdh)$, then by \cref{Lemma:Norm_Equiv}, we have
\begin{equation}\label{L2-energy}
\left\|u_{\delta}^{*}-u_{\delta}^{h}\right\|_{L^{2}\left(\Omega\right)}=\left\|u_{\delta}^{*}-u_{\delta}^{h}\right\|_{L^{2}\left(\Omdh\right)}
\lesssim \left\|u_{\delta}^{*}-u_{\delta}^{h}\right\|_{\delta}\lesssim \delta^{(3+\mu)/2}+\delta^{-1}h^{2}.
\end{equation}
This, together with \cref{aux_result}, leads to
\begin{equation*}
\left\|u_{\delta}-u_{\delta}^{h}\right\|_{L^{2}\left(\Omega\right)}\leq
\left\|u_{\delta}-u_{\delta}^{*}\right\|_{L^{2}\left(\Omega\right)}+
\left\|u_{\delta}^{*}-u_{\delta}^{h}\right\|_{L^{2}\left(\Omega\right)}
\lesssim \delta^{(3+\mu)/2}+\delta^{-1}h^{2},
\end{equation*}
this is \cref{Error_Numer}, the proof is complete.
\end{proof}

\begin{remark}\label{Nitche}
The convergence rates \cref{Error_Numer} is possibly not sharp, which is due to the
inequality \cref{L2-energy} used. As we know, when FEMs are used
to discretize a PDE, the Aubin-Nitsche technique often remains valid. Thus, the discretization error in the $L^2$ norm tends to exhibit a higher order than that in the $H^1$ semi-norm with respect to the mesh size. Although there is no proof of
its analog in the nonlocal problem setting with an integrable kernel (due to the lack of regularity pick-up), we are able to numerically observe this phenomenon from the numerical results in \cref{sec:Numer_exp}. That is, the $L^2$ norm of the discretization error exhibits a higher order with respect to the horizon parameter than the energy norm.
\end{remark}

\subsection{The CDG approximation for the nonlocal
problem with polygonal interaction neighborhoods}\label{subsec:Err_ud_nd_s}
The linear CDG approximation for the nonlocal problem \cref{nonlocal_approx_sym} finds
$u_{\delta,n_{\delta}}^{0,h}\in V_{\delta}^{0,h}$ such that for all $w_{h}\in V^{0,h}_{\delta}$,
\begin{align}\label{nonlocal_fem_nocap-nsym}
-2\int_{\Omega}w_{h}({\bf x})\int_{\Omega}
\left(u_{\delta,n_{\delta}}^{0,h}({\bf y})-u_{\delta,n_{\delta}}^{0,h}({\bf x})\right)
\gamma_{\delta,n_{\delta}}({\bf x},{\bf y})d{\bf y}d{\bf x}\\
=\int_{\Omega}f_{\delta}({\bf x})w_h({\bf x})d{\bf x}
+2\int_{\Omega}w_{h}({\bf x})\int_{\Omdc}
\left(g_{\delta}^{h}({\bf y})-u_{\delta,n_{\delta}}^{0,h}({\bf x})\right)
\gamma_{\delta,n_{\delta}}({\bf x},{\bf y})d{\bf y}d{\bf x},\nonumber
\end{align}
and $u_{\delta,n_{\delta}}^{h}=u_{\delta,n_{\delta}}^{0,h}+g_{\delta}^{h}$
is the final approximation of $u_{\delta,n_{\delta}}$. Note that since $\mathcal{T}_{\delta}^{h}$
is quasi-uniform, then 
\begin{equation}\label{dtoh}
n_{\delta}\sim\delta/h.
\end{equation}
We introduce the intermediate problem which finds $u_{\delta,n_{\delta}}^{*}\in V^{0}(\Omdh)+g_{\delta}^{h}$ such that
\begin{equation*}
-\mathcal{L}_{\delta,n_{\delta}}u_{\delta,n_{\delta}}^{*}
=f_{\delta}.
\end{equation*}
Using the similar argument in \cref{Thm:Convergence_ud}, we have the following error estimate.
\begin{theorem}\label{Thm:Convergence_appr-sym}
If the conditions of \cref{Thm:Nonlocal-local-poly-sym} hold. We have 
\begin{align}
\|u_{\delta,n_{\delta}}-u_{\delta,n_{\delta}}^{h}\|_{\delta,n_{\delta}}\lesssim
\delta^{(3+\mu)/2}+\delta^{-1}h^{2}+\delta^{-\lambda-1}h^{\lambda},
\label{Error_Numer_appr_del-sym}\\
\|u_{\delta,n_{\delta}}-u_{\delta,n_{\delta}}^{h}\|_{L^{2}\left(\Omega\right)}\lesssim
\delta^{(3+\mu)/2}+\delta^{-1}h^{2}+\delta^{-\lambda-1}h^{\lambda}.
\label{Error_Numer_appr-sym}
\end{align} 
\end{theorem}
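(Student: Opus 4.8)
The plan is to transcribe the proof of \cref{Thm:Convergence_ud} almost verbatim, replacing the radial kernel $\gamma_{\delta}$ by the symmetric (but non-radial) kernel $\gamma_{\delta,n_{\delta}}$, the energy norm $\|\cdot\|_{\delta}$ by $\|\cdot\|_{\delta,n_{\delta}}$, and the local-limit bound \cref{Conv_Stand} by its polygonal counterpart \cref{SecOrder_Poly-sym,SecOrder_Poly_Del-sym}. The structural fact that makes this possible is that $\gamma_{\delta,n_{\delta}}({\bf x},{\bf y})$ is symmetric in its two arguments, so $\mathcal{L}_{\delta,n_{\delta}}$ is self-adjoint and both the nonlocal Green's first identity and the Galerkin-orthogonality machinery of \cref{Thm:Convergence_ud} remain at our disposal. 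First I would introduce the intermediate problem $u_{\delta,n_{\delta}}^{*}\in V^{0}(\Omdh)+g_{\delta}^{h}$ with $-\mathcal{L}_{\delta,n_{\delta}}u_{\delta,n_{\delta}}^{*}=f_{\delta}$ on $\Omega$, so that $\mathcal{L}_{\delta,n_{\delta}}(u_{\delta,n_{\delta}}-u_{\delta,n_{\delta}}^{*})=0$ on $\Omega$ while the boundary discrepancy is $g_{\delta}-g_{\delta}^{h}=O(h^{2})$ on $\Omdc$. Applying the maximum-principle bound of the type in \cref{Thm:Max_Princ_Bound} (valid for $\mathcal{L}_{\delta,n_{\delta}}$ under \cref{consistent_cond}, exactly as invoked in the proof of \cref{Thm:Nonlocal-local-poly-sym}) gives $\|u_{\delta,n_{\delta}}-u_{\delta,n_{\delta}}^{*}\|_{C(\Omega)}\lesssim h^{2}\|g_{\delta}\|_{C^{2}(\Omdc)}$; then the Green's identity together with $G(\gamma_{\delta,n_{\delta}})\le G(\gamma_{\delta})\lesssim\delta^{-2}$ (since $\gamma_{\delta,n_{\delta}}\le\gamma_{\delta}$ pointwise) yields $\|u_{\delta,n_{\delta}}-u_{\delta,n_{\delta}}^{*}\|_{\delta,n_{\delta}}\lesssim\delta^{-1/2}h^{2}$.

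Next, using the conforming property \cref{Set_conf} and self-adjointness, I would establish the orthogonality $(u_{\delta,n_{\delta}}^{*}-u_{\delta,n_{\delta}}^{h},w_{h})_{\delta,n_{\delta}}=0$ for all $w_{h}\in V_{\delta}^{0,h}$, hence Cea's lemma $\|u_{\delta,n_{\delta}}^{*}-u_{\delta,n_{\delta}}^{h}\|_{\delta,n_{\delta}}\le\|u_{\delta,n_{\delta}}^{*}-v_{h}\|_{\delta,n_{\delta}}$ for every $v_{h}\in V_{\delta}^{h}$. Choosing $v_{h}=I_{h}\widetilde{u}_{0}$ and expanding with the triangle inequality as in \cref{udstar-h}, I bound the interpolation term by $\|\widetilde{u}_{0}-I_{h}\widetilde{u}_{0}\|_{\delta,n_{\delta}}\le\|\widetilde{u}_{0}-I_{h}\widetilde{u}_{0}\|_{\delta}\lesssim\delta^{-1}h^{2}$ (through \cref{equiv_norm_d_nd_s}, \cref{Lemma:Norm_Equiv_inf}, and the Lagrange interpolation estimate), the intermediate term by $\delta^{-1/2}h^{2}$ from the previous paragraph, and the consistency term by \cref{SecOrder_Poly_Del-sym}, i.e.\ $\delta^{(3+\mu)/2}+\delta^{-1}n_{\delta}^{-\lambda}$. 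The quasi-uniformity relation $n_{\delta}\sim\delta/h$ of \cref{dtoh} converts $\delta^{-1}n_{\delta}^{-\lambda}$ into $\delta^{-\lambda-1}h^{\lambda}$, and a final triangle inequality combining this with the intermediate estimate produces \cref{Error_Numer_appr_del-sym}.

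For the $L^{2}$ bound \cref{Error_Numer_appr-sym}, I would split $u_{\delta,n_{\delta}}-u_{\delta,n_{\delta}}^{h}$ through $u_{\delta,n_{\delta}}^{*}$. Since $u_{\delta,n_{\delta}}^{*}-u_{\delta,n_{\delta}}^{h}\in V^{0}(\Omdh)$ (both carry the same data $g_{\delta}^{h}$ on $\Omdc$), a Poincar\'e-type lower bound follows from $\|\cdot\|_{r(n_{\delta})}\le\|\cdot\|_{\delta,n_{\delta}}$ in \cref{equiv_norm_d_nd_s} combined with \cref{Lemma:Norm_Equiv} applied with $\delta$ replaced by $r(n_{\delta})\le\delta_{0}$, giving $\|u_{\delta,n_{\delta}}^{*}-u_{\delta,n_{\delta}}^{h}\|_{L^{2}(\Omega)}\lesssim\|u_{\delta,n_{\delta}}^{*}-u_{\delta,n_{\delta}}^{h}\|_{\delta,n_{\delta}}$, which inherits the energy-norm rate just obtained. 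The complementary piece $\|u_{\delta,n_{\delta}}-u_{\delta,n_{\delta}}^{*}\|_{L^{2}(\Omega)}\lesssim h^{2}$ comes from the $C(\Omega)$ estimate of the first paragraph and boundedness of $\Omega$, and adding the two completes \cref{Error_Numer_appr-sym}.

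I expect the main obstacle to be the maximum-principle step, which is the only place where the non-symmetric support genuinely departs from \cref{Thm:Convergence_ud}. Unlike the radial case, $\int_{B_{\delta}({\bf x})}({\bf y}-{\bf x})\gamma_{\delta,n_{\delta}}({\bf x},{\bf y})d{\bf y}$ need not vanish, so the comparison function $q({\bf x})=|{\bf x}-{\bf x}^{*}|_{2}^{2}$ picks up a first-moment contribution $4\sum_{i}(x_{i}-x_{i}^{*})\sigma_{n_{\delta}}^{i,1}({\bf x})$ in $\mathcal{L}_{\delta,n_{\delta}}q$, which can change sign. One must invoke \cref{consistent_cond} to ensure that, for $\delta$ small, this term is dominated by the strictly positive second-moment part, so that a uniform lower bound $\mathcal{L}_{\delta,n_{\delta}}q\ge\eta^{*}/2>0$ persists and the estimate of \cref{Thm:Max_Princ_Bound} transfers with a controlled constant. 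This is precisely why the hypotheses of \cref{Thm:Nonlocal-local-poly-sym}, and not merely those of \cref{Thm:Nonlocal-local-poly}, are needed here.
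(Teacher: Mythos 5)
Your proposal follows essentially the same route as the paper: the same intermediate problem $u_{\delta,n_{\delta}}^{*}$, the same maximum-principle and Green's-identity bounds giving $\lesssim h^{2}$ in $C(\Omega)$ and $\lesssim\delta^{-1/2}h^{2}$ in the energy norm, the same Galerkin orthogonality with $v_{h}=I_{h}\widetilde{u}_{0}$ combined with \cref{equiv_norm_d_nd_s}, \cref{Lemma:Norm_Equiv_inf} and \cref{SecOrder_Poly_Del-sym}, the conversion $n_{\delta}\sim\delta/h$ from \cref{dtoh}, and the Poincar\'{e}-type inequality for the $L^{2}$ bound. Your closing paragraph on why the comparison-function argument for the maximum principle needs \cref{consistent_cond} to control the nonvanishing first moment is a point the paper leaves implicit behind the phrase ``similar to the derivation in \cref{Thm:Convergence_ud}''; making it explicit is a correct and welcome refinement, not a different proof.
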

\begin{proof}
Similar to the derivation in \cref{Thm:Convergence_ud}, we have
\begin{align}
\|u_{\delta,n_{\delta}}-u_{\delta,n_{\delta}}^{*}\|_{C(\Omega)}&\lesssim
\|g_{\delta}-g_{\delta}^{h}\|_{C(\Omdc)}\leq Ch^{2}\|g_{\delta}\|_{C^{2}\left(\Omdc\right)},\label{aux_result_appr}\\
\|u_{\delta,n_{\delta}}-u_{\delta,n_{\delta}}^{*}\|_{\delta,n_{\delta}}&\leq C\delta^{-1/2}h^{2}.\label{aux_result_ndeltanorm_noconstr}
\end{align}
Since $n_{\delta}\rightarrow\infty$ as $\delta\rightarrow 0$,
by \cref{equiv_norm_d_nd_s}, \cref{Lemma:Norm_Equiv_inf} and \cref{SecOrder_Poly_Del-sym}, we have
\begin{align}
&\|u_{\delta,n_{\delta}}^{*}-u_{\delta,n_{\delta}}^{h}\|_{\delta,n_{\delta}}\leq
\|u_{\delta,n_{\delta}}^{*}-I_{h}\widetilde{u}_{0}\|_{\delta,n_{\delta}}\label{udnstar-h}\\
&\leq\|u_{\delta,n_{\delta}}^{*}-u_{\delta,n_{\delta}}\|_{\delta,n_{\delta}}
+\|\widetilde{u}_{0}-I_{h}\widetilde{u}_{0}\|_{\delta,n_{\delta}}+
\|u_{\delta,n_{\delta}}-\widetilde{u}_{0}\|_{\delta,n_{\delta}}\nonumber\\
&\leq C\delta^{-1/2}h^{2}+C\delta^{-1}\|\widetilde{u}_{0}
-I_{h}\widetilde{u}_{0}\|_{C\left(\Omdh\right)}+\|u_{\delta,n_{\delta}}
-\widetilde{u}_{0}\|_{\delta,n_{\delta}}\nonumber\\
&\leq C\delta^{-1/2}h^{2}+C\delta^{-1}h^{2}\|\widetilde{u}_{0}\|_{C^{2}\left(\Omdh\right)}
+C\delta^{-1}n_{\delta}^{-\lambda}+Cn_{\delta}^{-\lambda}+C\delta^{(3+\mu)/2}\nonumber\\
&\lesssim \delta^{(3+\mu)/2}+\delta^{-1}h^{2}+\delta^{-1}n_{\delta}^{-\lambda}.\nonumber
\end{align}
This, together with \cref{aux_result_ndeltanorm_noconstr} and \cref{dtoh}, leads to \cref{Error_Numer_appr_del-sym}.
By the uniform Poincar\'{e}'s inequality for the norm $\|\cdot\|_{\delta}$, \cref{equiv_norm_d_nd_s} and \cref{aux_result_appr}, we get \cref{Error_Numer_appr-sym}.
\end{proof}

\section{Error estimates between the nonlocal discrete solutions and the local exact solution}\label{sec:ACtheory}
We now combine the error estimates in \cref{sec:Conv_d} and \cref{sec:Conv_h}
to derive the error estimate between the nonlocal discrete solutions and the local exact solution. 

First, by combining \cref{Thm:Nonlocal-local-stand} and \cref{Thm:Convergence_ud} we get the following theorem.
\begin{theorem}\label{Thm:Nonlocal-Fem-stand}
Suppose $u_{0}\in C^{4}_{b}(\Omega)$ is the solution of the
local problem \cref{local_diffusion}, the family of kernels $\{\gamma_{\delta}\}$
satisfies \cref{kernel_finite,General_kernel,K_delta_est}.
If $\widetilde{u}_{0}$ is a $C^{4}$ extension of $u_{0}$, $u_{\delta}^{h}$ is the linear CDG approximation of the
nonlocal problem \cref{nonlocal_diffusion} under the condition \cref{bc-rhs-est},
then it holds that
\begin{align}
\left\|\widetilde{u}_{0}-u_{\delta}^{h}\right\|_{\delta}
&=\left\|\widetilde{u}_{0}-u_{\delta}+u_{\delta}-u_{\delta}^{h}\right\|_{\delta}\lesssim
\delta^{(3+\mu)/2}+\delta^{-1}h^{2},\label{udh_del}\\
\left\|u_{0}-u_{\delta}^{h}\right\|_{L^{2}\left(\Omega\right)}
&=\left\|u_{0}-u_{\delta}+u_{\delta}-u_{\delta}^{h}\right\|_{L^{2}\left(\Omega\right)}\lesssim
\delta^{(3+\mu)/2}+\delta^{-1}h^{2}.\label{udh_L2}
\end{align}
\end{theorem}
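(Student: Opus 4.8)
The plan is to combine the two previously established bounds through the triangle inequality, using the nonlocal solution $u_{\delta}$ as the natural bridge between the local limit $\widetilde{u}_{0}$ and its discrete approximation $u_{\delta}^{h}$. The statement itself already displays the splitting $\widetilde{u}_{0}-u_{\delta}^{h}=(\widetilde{u}_{0}-u_{\delta})+(u_{\delta}-u_{\delta}^{h})$, so the whole argument amounts to controlling each piece by an existing theorem and adding the rates.

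First I would handle the energy-norm estimate \cref{udh_del}. Applying the triangle inequality for $\|\cdot\|_{\delta}$ to the above decomposition gives
\begin{equation*}
\|\widetilde{u}_{0}-u_{\delta}^{h}\|_{\delta}\leq\|\widetilde{u}_{0}-u_{\delta}\|_{\delta}+\|u_{\delta}-u_{\delta}^{h}\|_{\delta}.
\end{equation*}
The first term is bounded by \cref{Conv_Stand} of \cref{Thm:Nonlocal-local-stand}, which yields $\|u_{\delta}-\widetilde{u}_{0}\|_{\delta}=O(\delta^{(3+\mu)/2})$, while the second is bounded by \cref{Error_Numer_del} of \cref{Thm:Convergence_ud} as $\lesssim\delta^{(3+\mu)/2}+\delta^{-1}h^{2}$. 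Summing the two contributions gives the asserted rate, since the two $\delta^{(3+\mu)/2}$ terms coalesce.

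Next, for the $L^{2}$ estimate \cref{udh_L2}, the same decomposition gives
\begin{equation*}
\|u_{0}-u_{\delta}^{h}\|_{L^{2}(\Omega)}\leq\|u_{0}-u_{\delta}\|_{L^{2}(\Omega)}+\|u_{\delta}-u_{\delta}^{h}\|_{L^{2}(\Omega)}.
\end{equation*}
Here I would convert the uniform bound $\|u_{\delta}-u_{0}\|_{C(\Omega)}=O(\delta^{2})$ from \cref{Conv_Stand} into an $L^{2}$ bound via $\|u_{\delta}-u_{0}\|_{L^{2}(\Omega)}\leq|\Omega|^{1/2}\|u_{\delta}-u_{0}\|_{C(\Omega)}=O(\delta^{2})$, and invoke \cref{Error_Numer} of \cref{Thm:Convergence_ud} to bound the second term by $\lesssim\delta^{(3+\mu)/2}+\delta^{-1}h^{2}$.

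The only step that needs a moment of care, and the closest thing to an obstacle, is checking that the $O(\delta^{2})$ term arising from the nonlocal-to-local error is absorbed into $\delta^{(3+\mu)/2}$ rather than degrading the rate. Since $\mu\in\{0,1\}$ we have $(3+\mu)/2\leq 2$, so for $\delta<1$ it follows that $\delta^{2}\lesssim\delta^{(3+\mu)/2}$ uniformly as $\delta\to0$, with equality of exponents exactly when $\mu=1$. With this observation the two $L^{2}$ contributions combine to $\delta^{(3+\mu)/2}+\delta^{-1}h^{2}$, and both \cref{udh_del} and \cref{udh_L2} follow. No new analytical difficulty is introduced; the content of the theorem is entirely in the bookkeeping of the two prior estimates and the exponent comparison.
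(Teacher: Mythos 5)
Your proposal is correct and is exactly the paper's argument: the paper proves \cref{Thm:Nonlocal-Fem-stand} simply "by combining \cref{Thm:Nonlocal-local-stand} and \cref{Thm:Convergence_ud}" via the triangle inequality on the displayed decomposition, which is what you do. Your explicit check that the $O(\delta^{2})$ uniform-norm contribution is absorbed into $\delta^{(3+\mu)/2}$ because $(3+\mu)/2\leq 2$ is a small piece of bookkeeping the paper leaves implicit, and it is handled correctly.
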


To make the analysis related to polygonal approximation concrete, we take the constant kernel \cref{Con_Kernel_d} as an illustrative example ($\lambda=2$) and adopt \emph{nocaps} as an approximation of the spherical neighborhood, that is, an approximation using the inscribed polygon of the 
Euclidean disc. We note that the analysis can be extended to other types of kernels listed in \cref{subsec:kernel} and other types of polygonal approximations. Since $\lambda=2$ and \cref{dtoh}, together with \cref{Thm:Nonlocal-local-poly-sym} and
\cref{Thm:Convergence_appr-sym} we get the following theorem.
\begin{theorem}\label{Thm:Nonlocal-Fem-sym}
Suppose $u_{0}\in C^{4}_{b}(\Omega)$ is the solution of the
local problem \cref{local_diffusion}, the family of kernels $\{\gamma_{\delta}\}$
satisfies \cref{kernel_finite,General_kernel,K_delta_est}. $\left\{ B_{\delta,n_{\delta}^{\bf x}}\right\}$ is a weakly quasi-uniform family of inscribed polygons. $\widetilde{u}_{0}$ is a $C^{4}$ extension of $u_{0}$ and $u_{\delta,n_{\delta}}^{h}$ is the linear CDG approximation of the nonlocal problem
\cref{nonlocal_approx_sym} under the condition \cref{bc-rhs-est}. If \cref{consistent_cond},
then it holds that
\begin{align}
\|\widetilde{u}_{0}-u_{\delta,n_{\delta}}^{h}\|_{\delta,n_{\delta}}
&\lesssim\delta^{(3+\mu)/2}+\delta^{-1}h^{2}+\delta^{-\lambda-1}h^{\lambda}
\sim\delta^{(3+\mu)/2}+\delta^{-3}h^{2}
,\label{Error_Total_Del-sym}\\
\|u_{0}-u_{\delta,n_{\delta}}^{h}\|_{L^{2}\left(\Omega\right)}
&\lesssim
\delta^{(3+\mu)/2}+\delta^{-1}h^{2}+\delta^{-\lambda-1}h^{\lambda}\sim\delta^{(3+\mu)/2}+\delta^{-3}h^{2}.\label{Error_Total-sym}
\end{align}
\end{theorem}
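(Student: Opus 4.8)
The plan is to derive both estimates by the triangle inequalities \cref{Total_Err_En} and \cref{Total_Err_L2} specialized to $\sharp=(\delta,n_{\delta})$, exactly as was done for the spherical case in \cref{Thm:Nonlocal-Fem-stand}. The two ingredients are already in hand: the continuum error $E_{1}=\|u_{\delta,n_{\delta}}-\widetilde{u}_{0}\|_{\delta,n_{\delta}}$ is controlled by \cref{Thm:Nonlocal-local-poly-sym} (whose hypotheses, including \cref{consistent_cond}, are assumed here), and the discretization error $E_{2}=\|u_{\delta,n_{\delta}}-u_{\delta,n_{\delta}}^{h}\|_{\delta,n_{\delta}}$ is controlled by \cref{Thm:Convergence_appr-sym}. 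So the proof reduces to substituting these two bounds and simplifying.

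First I would write, for the energy norm,
\[
\|\widetilde{u}_{0}-u_{\delta,n_{\delta}}^{h}\|_{\delta,n_{\delta}}\leq
\|u_{\delta,n_{\delta}}-\widetilde{u}_{0}\|_{\delta,n_{\delta}}+
\|u_{\delta,n_{\delta}}-u_{\delta,n_{\delta}}^{h}\|_{\delta,n_{\delta}},
\]
and insert \cref{SecOrder_Poly_Del-sym} for the first term and \cref{Error_Numer_appr_del-sym} for the second, obtaining
\[
\|\widetilde{u}_{0}-u_{\delta,n_{\delta}}^{h}\|_{\delta,n_{\delta}}\lesssim
\delta^{(3+\mu)/2}+\delta^{-1}n_{\delta}^{-\lambda}+\delta^{-1}h^{2}+\delta^{-\lambda-1}h^{\lambda}.
\]
The key reduction is to absorb the continuum term $\delta^{-1}n_{\delta}^{-\lambda}$. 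By the quasi-uniformity relation \cref{dtoh}, $n_{\delta}\sim\delta/h$, so $n_{\delta}^{-\lambda}\sim\delta^{-\lambda}h^{\lambda}$ and hence $\delta^{-1}n_{\delta}^{-\lambda}\sim\delta^{-\lambda-1}h^{\lambda}$, a term already present. The bound therefore collapses to $\delta^{(3+\mu)/2}+\delta^{-1}h^{2}+\delta^{-\lambda-1}h^{\lambda}$. Inserting the value $\lambda=2$ for the constant kernel under consideration gives $\delta^{-\lambda-1}h^{\lambda}=\delta^{-3}h^{2}$, which dominates $\delta^{-1}h^{2}$ since $\delta<1$, yielding the stated $\sim\delta^{(3+\mu)/2}+\delta^{-3}h^{2}$ and establishing \cref{Error_Total_Del-sym}. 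The $L^{2}$ estimate \cref{Error_Total-sym} is identical in structure: via \cref{Total_Err_L2} I would bound the continuum $L^{2}$ error by \cref{SecOrder_Poly-sym} (using $C(\Omega)\hookrightarrow L^{2}(\Omega)$ on the bounded set $\Omega$), contributing $\delta^{2}+\delta^{-1}n_{\delta}^{-\lambda}$, and the discretization $L^{2}$ error by \cref{Error_Numer_appr-sym}; after the same substitution $n_{\delta}\sim\delta/h$ the only new term is $\delta^{2}$, which is absorbed into $\delta^{(3+\mu)/2}$ because $\delta^{(3+\mu)/2}\geq\delta^{2}$ for $\mu\in\{0,1\}$ as $\delta\to0$.

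Since the substantive analysis is carried out in \cref{Thm:Nonlocal-local-poly-sym} and \cref{Thm:Convergence_appr-sym}, I do not expect a genuine obstacle in this combination step. The only point demanding care is the asymptotic bookkeeping: one must verify that the continuum contribution $\delta^{-1}n_{\delta}^{-\lambda}$ does not, after the conversion $n_{\delta}\sim\delta/h$, generate a term worse than $\delta^{-\lambda-1}h^{\lambda}$, and that the final simplification to $\delta^{-3}h^{2}$ faithfully reflects the value $\lambda=2$ rather than an inadvertent use of $\lambda=4$. Tracking which power of $\delta$ dominates in each summand—in particular that negative powers of $\delta$ amplify rather than suppress the mesh terms—is the one place where an error could slip in.
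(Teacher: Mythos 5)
Your proposal is correct and follows exactly the route the paper takes: the paper itself gives no separate proof but simply states that \cref{Thm:Nonlocal-Fem-sym} follows by combining \cref{Thm:Nonlocal-local-poly-sym} and \cref{Thm:Convergence_appr-sym} via the triangle inequalities \cref{Total_Err_En}--\cref{Total_Err_L2}, using $n_{\delta}\sim\delta/h$ from \cref{dtoh} and $\lambda=2$ for the constant kernel. Your bookkeeping (absorbing $\delta^{-1}n_{\delta}^{-\lambda}$ into $\delta^{-\lambda-1}h^{\lambda}$, $\delta^{2}\lesssim\delta^{(3+\mu)/2}$, and $\delta^{-1}h^{2}\lesssim\delta^{-3}h^{2}$) is exactly what is needed.
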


\section{Numerical experiment}\label{sec:Numer_exp}
We first consider in \cref{subsec:Numer:exact} the nonlocal problems with the exact right-hand side (RHS) function and VC.
Hence the error induced in \cref{Thm:Nonlocal-local-stand} vanishes.
We report the convergence results of \emph{exactcaps} ($u_{\delta}^{h}$) and \emph{nocaps} ($u_{\delta,n_{\delta}}^{h}$) solutions for three cases of parameter setting: fixed horizon parameter, fixed ratio and power function between the horizon parameter and the mesh size. They corresponds to $m$-convergence, $\delta$-convergence, and $\delta m$-convergence for numerical methods of peridynamics defined in \cite{bobaru2009convergence}, respectively. Then in \cref{subsec:Numer:pert} the nonlocal problems with a perturbed RHS function and VC are discussed.

\subsection{Nonlocal problems with an exact RHS function and VC}\label{subsec:Numer:exact}
\begin{example}\label{Example:Continuous}
We consider the nonlocal problem \cref{nonlocal_diffusion} on the domain $\Omega=(0,1)^{2}$
with the family of kernels $\{\gamma_{\delta}\}$ defined by \cref{Con_Kernel_d} in 2D, namely
\begin{equation}\label{const_kernel}
     \gamma_{\delta}({\bf x},{\bf y})=
     \left \{
     \begin{array}{cl}
     4/\pi/\delta^4  ,& |{\bf x}-{\bf y}|\leq \delta, \\
     0               ,& |{\bf x}-{\bf y}|> \delta.
     \end{array}
     \right .
\end{equation}
As in \cite{d2021cookbook} the manufactured solution $u_{\delta}({\bf x})=x_{1}^2x_{2}+x_{2}^{2}$
is used to obtain the RHS function $f_{\delta}({\bf x})=-2(x_{2}+1)$ for ${\bf x}\in\Omega$ and VC $g_{\delta}({\bf x}) = u_{\delta}({\bf x})$
for ${\bf x}\in\Omdc$. 
\end{example}

The solution of the local problem \cref{local_diffusion} with a RHS function $f_{0}({\bf x})=-2(x_{2}+1)$ and boundary function $g_{0}({\bf x})=x_{1}^2x_{2}+x_{2}^{2}$ is
$u_{0}({\bf x})=u_{\delta}({\bf x})|_{\Omega}$,
while we take
\begin{equation*}
\widetilde{u}_{0}({\bf x})=x_{1}^2x_{2}+x_{2}^{2}\:\: \mbox{for}\:\: {\bf x}\in\Omdh.
\end{equation*}
In fact in this example $g_{\delta}=\widetilde{u}_{0}$ on $\Omdc$,
$f_{\delta}=f_{0}$ on $\Omega$ holds in \cref{Thm:Nonlocal-local-stand}.
Thus $u_{\delta}=\widetilde{u}_{0}\:\:\mbox{on}\:\:\Omdh$, and the term $\delta^{(3+\mu)/2}$ in \cref{udh_del,udh_L2,Error_Total_Del-sym,Error_Total-sym} vanishes. Then we report on the errors in different norms of $u_{\delta}^{h}$ and $u_{\delta,n_{\delta}}^{h}$
against $\widetilde{u}_{0}$.

\subsubsection{Numerical results for a fixed horizon parameter}\label{Subsec:fix_del}
We fix $\delta=0.4$ as the mesh is refined with a decreasing $h$, and then study the errors
and convergence rates of the \emph{exactcaps} and \emph{nocaps} solutions. For the polygon $\Omega$, the corresponding interaction domain $\Omdc$ has rounded corners, thus it cannot be fully triangulated into elements with straight sides. As pointed out in \cite{d2021cookbook}, $\Omdc$ can be approximated by a polygonal domain with sharp corners to replace the rounded corners while avoiding the extension of the function $g_{\delta}$. The corresponding (consistent) mesh $\mathcal{T}_{\delta}^{h}$ and the solution $u_{\delta}^{h}$ for $h=0.05$ are plotted in \cref{Fig:Exam1:ConMesh-Solution}. Since the figure of $u_{\delta,n_{\delta}}^{h}$ is similar to that of $u_{\delta}^{h}$, the former is omitted. We also use a non-consistent mesh to obtain $u_{\delta}^{h}$, the corresponding mesh and the solution are plotted
in \cref{Fig:Exam1:NonconMesh-Solution}. Since the solutions of two strategies (using consistent and
non-consistent meshes) are rather similar, we take the consistent mesh for later numerical experiments.

\begin{figure}[tbhp]
\centering
\subfloat[the consistent mesh $\mathcal{T}_{0.4}^{0.05}$]{\includegraphics[width=5cm]{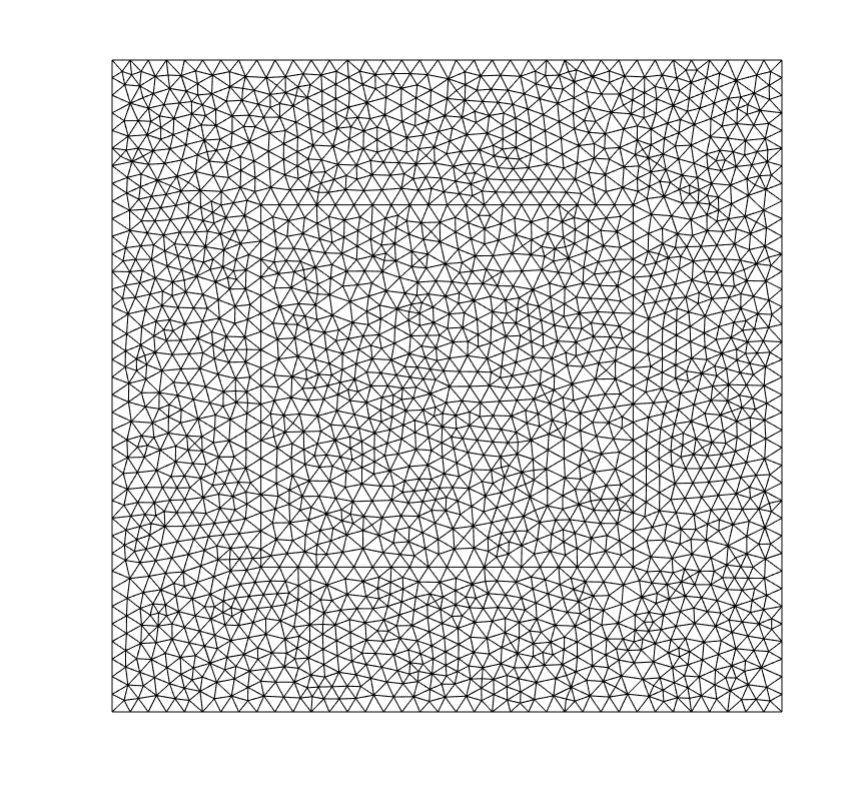}}
\subfloat[the solution $u_{0.4}^{0.05}$]{\includegraphics[width=6cm]{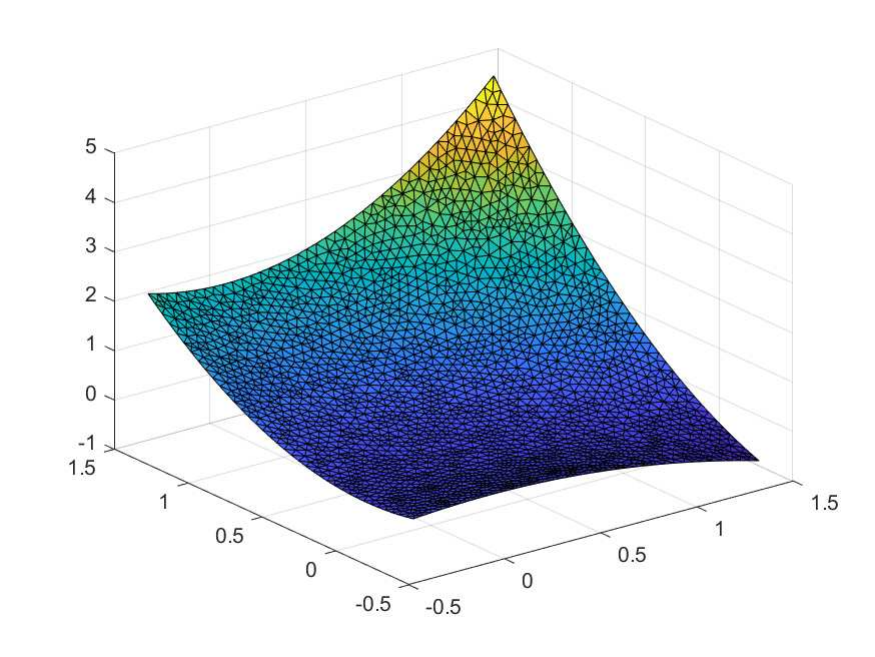}}
\caption{\cref{Example:Continuous}: the consistent mesh and corresponding \emph{exactcaps} solution $u_{\delta}^{h}$, $\delta=0.4$, $h=0.05$}\label{Fig:Exam1:ConMesh-Solution}
\end{figure}

\begin{figure}[tbhp]
\centering
\subfloat[the non-consistent mesh $\mathcal{T}_{0.4}^{h,H}$]{\includegraphics[width=5.5cm]{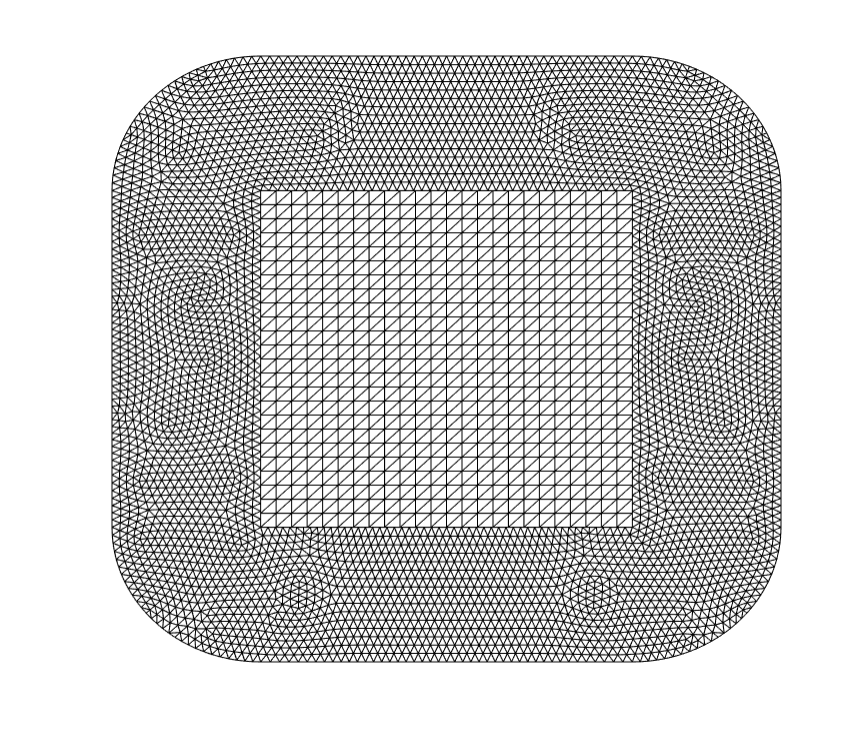}}
\subfloat[the solution $u_{0.4}^{h,H}$]{\includegraphics[width=6cm]{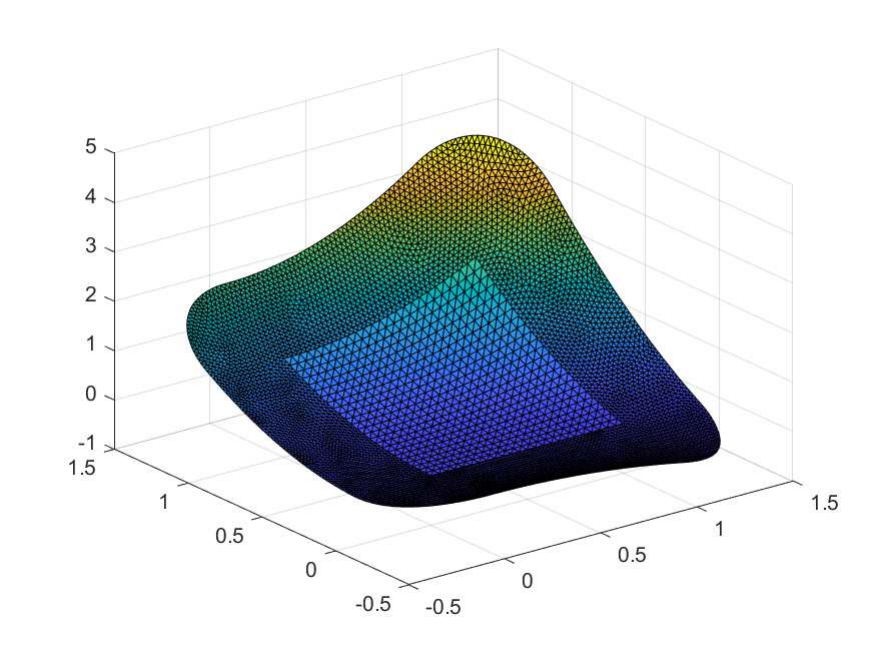}}
\caption{\cref{Example:Continuous}: the non-consistent mesh and corresponding solution $u_{\delta}^{h,H}$, $\delta=0.4$, $h=0.05\sqrt{2}$, $H=3h/8$}\label{Fig:Exam1:NonconMesh-Solution}
\end{figure}

Panel (B) of \cref{Fig:Exam1:ConMesh-Solution} shows that
the solutions $u_{\delta}^{h}$ are continuous across $\partial\Omega$. In fact, in this example, they are expected to be continuous since the analytic expression of $u_{\delta}$ is specified in advance. This assertion could not be applied to $u_{\delta,n_{\delta}}^{h}$ since the analytic expression of $u_{\delta,n_{\delta}}$ is unknown. 
In fact, there is a lack of theory so far to ensure the continuity of $u_{\delta}$ across $\partial\Omega$ in general, although the continuity in $\Omega$ has been discussed in \cite{du2019conforming}. Along this line, the continuity across $\partial\Omega$ for $u_{\delta,n_{\delta}}$ is also not assured. Due to the definition of the linear CDG space, $u_{\delta}^{h}$ and $u_{\delta,n_{\delta}}^{h}$ are allowed to be discontinuous across $\partial\Omega$. Here $u_{0.4}^{0.05}$ might appear to be continuous simply because the jumps across $\partial\Omega$ are smaller than the scales of the solution itself. If we zoom in to examine these solutions in greater detail, especially for larger $h$, the discontinuity can become more visible. For this purpose, we  present zoomed plots around point $(1,1)$ of the solutions $u_{\delta}^{h}$ and $u_{\delta,n_{\delta}}^{h}$ for $h=\delta$ in \cref{Fig:Exam1:Solutionstep1_zoom}.

\begin{figure}[tbhp]
\centering
\subfloat[the solution $u_{0.4}^{0.4}$]{\includegraphics[width=6cm]{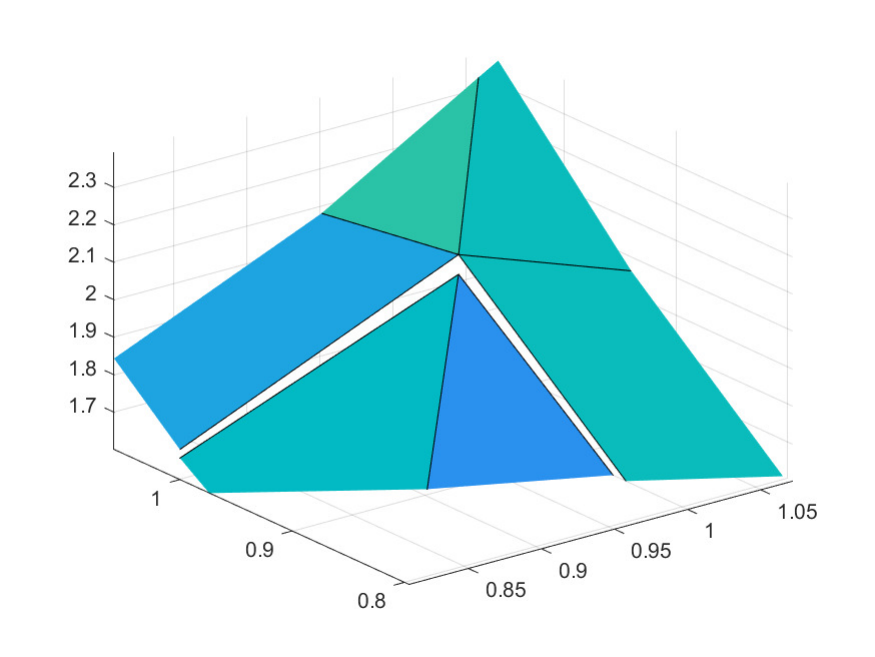}}
\subfloat[the solution $u_{0.4,n_{0.4}}^{0.4}$]{\includegraphics[width=6cm]{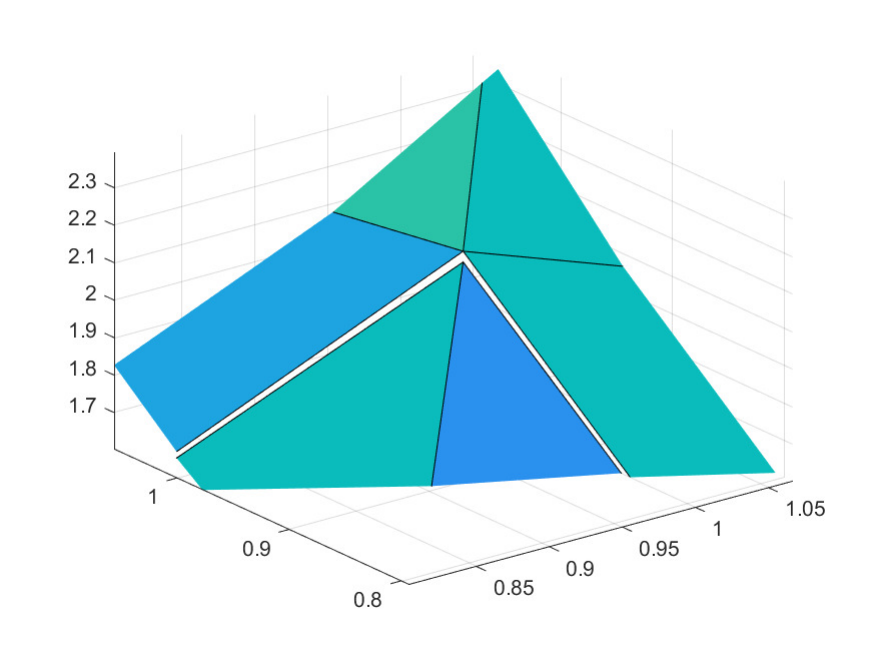}}
\caption{\cref{Example:Continuous}: zoom around point $(1,1)$ of $u_{\delta}^{h}$ and
$u_{\delta,n_{\delta}}^{h}$, $h=\delta=0.4$}\label{Fig:Exam1:Solutionstep1_zoom}
\end{figure}

It is seen from \cref{table:Exam1:hconv} that the
solution $u_{\delta}^{h}$ has smaller errors in the energy norm than that of $u_{\delta,n_{\delta}}^{h}$, while the opposite behavior is observed in the case of the $L^{2}$ norm. However, we do observe the second-order convergence rates for both methods in both types of norms. This confirms the theoretical results since $\delta$ is taken to be a constant in this set of experiments, that is
by \cref{udh_del,udh_L2,Error_Total_Del-sym,Error_Total-sym} without the term $\delta^{(3+\mu)/2}$, 
the errors in the energy and $L^{2}$ norms of $u_{\delta}^{h}$ and $u_{\delta,n_{\delta}}^{h}$ are of $\delta^{-1}h^{2}$ and $\delta^{-3}h^{2}$ order, respectively.

\begin{table}[ht]
\setlength{\tabcolsep}{4pt}
\centering
\caption{\cref{Example:Continuous}: errors and convergence rate of nonlocal numerical solutions $u_{\delta}^{h}$ and $u_{\delta,n_{\delta}}^{h}$ against the local exact solution $\widetilde{u}_{0}$ in the energy norms and $L^2$ norms, $\delta=0.4$}\label{table:Exam1:hconv} \footnotesize
\begin{tabular}{|c|ccccc|cccc|}
\hline
& \multicolumn{5}{c|}{Energy norm}& \multicolumn{4}{c|}{$L^{2}$ norm}\\
\hline
$\frac{\delta}{h}$ & $\|\widetilde{u}_{0}$-$u_{\delta}^{h}\!\|_{\delta}$ &  Rate & $\|\widetilde{u}_{0}$-$u_{\delta,n_{\delta}}^{h}\|_{\delta,n_{\delta}}$
& Rate & $\|\widetilde{u}_{0}$-$u_{\delta,n_{\delta}}^{h}\!\|_{\delta}$ & $\|u_{0}$-$u_{\delta}^{h}\|$ & Rate & $\|u_{0}$-$u_{\delta,n_{\delta}}^{h}\!\|$&
Rate \\
\hline
$2^{0}$ &6.38e-2& -- &1.22e-1& -- &1.25e-1&1.58e-2& -- &1.40e-2& --  \\                                                        
\hline                                                         
$2^{1}$ &1.01e-2&2.66&2.51e-2&2.28&2.53e-2&6.11e-3&1.37&2.08e-3&2.75 \\
\hline                                                           
$2^{2}$ &2.83e-3&1.84&6.67e-3&1.91&6.69e-3&1.46e-3&2.06&4.65e-4&2.16 \\
\hline                                                           
$2^{3}$ &6.49e-4&2.12&1.58e-3&2.07&1.58e-3&3.70e-4&1.98&1.27e-4&1.88 \\
\hline                                                           
$2^{4}$ &1.58e-4&2.04&3.95e-4&2.00&3.95e-4&9.14e-5&2.02&3.17e-5&2.00 \\
\hline                                                           
$2^{5}$ &3.96e-5&1.99&9.76e-5&2.02&9.76e-5&2.28e-5&2.01&8.11e-6&1.97 \\
\hline
\end{tabular}
\end{table}

\begin{table}[ht]
\centering
\caption{\cref{Example:Continuous}: errors $\|\widetilde{u}_{0}-u_{\delta}^{h}\|_{\delta}$ and convergence rates, $\delta=mh$}\label{table:Exam1:ac:exactcap:energynorm} \footnotesize
\begin{tabular}{|c|cccccc|}
\hline $\delta_{0}/\delta$ & $2^{0}$ & $2^{1}$ & $2^{2}$ & $2^{3}$ & $2^{4}$& $2^{5}$\\
\hline
$\delta_{0}$=0.4, $m$=2 & 1.01e-2 & 5.12e-3 & 2.43e-3 & 1.16e-3 & 5.73e-4 & 2.84e-4 \\
 Rate                   & --      & 0.98    & 1.08    & 1.07    & 1.02    & 1.01    \\
\hline
$\delta_{0}$=0.6, $m$=3 & 6.89e-3 & 3.60e-3 & 1.72e-3 & 8.14e-4 & 4.02e-4 & 1.98e-4 \\
 Rate                   & --      & 0.94    & 1.07    & 1.08    & 1.02    & 1.02    \\
\hline
$\delta_{0}$=0.8, $m$=4 & 5.13e-3 & 2.83e-3 & 1.30e-3 & 6.19e-4 & 3.08e-4 & 1.51e-4 \\
 Rate                   & --      & 0.86    & 1.12    & 1.07    & 1.01    & 1.03    \\
\hline
$\delta_{0}$=1.0, $m$=5 & 3.90e-3 & 2.26e-3 & 1.02e-3 & 4.99e-4 & 2.48e-4 & 1.21e-4 \\
 Rate                   & --      & 0.79    & 1.15    & 1.03    & 1.01    & 1.04    \\
\hline
\end{tabular}
\end{table}

\begin{table}[ht]
\centering
\caption{\cref{Example:Continuous}: errors $\|\widetilde{u}_{0}-u_{\delta}^{h}\|_{\delta}$, $\|\widetilde{u}_{0}-u_{\delta,n_{\delta}}^{h}\|_{\delta,n_{\delta}}$ and convergence rates, $h$ fixed}\label{table:Exam1:ac:exactcap:energynorm:hfixed} \footnotesize
\begin{tabular}{|c|cccc|cccc|}
\hline
& \multicolumn{4}{c|}{$\|\widetilde{u}_{0}-u_{\delta}^{h}\|_{\delta}$}& \multicolumn{4}{c|}{$\|\widetilde{u}_{0}-u_{\delta,n_{\delta}}^{h}\|_{\delta,n_{\delta}}$}\\
\hline 
$h$     & $\delta=2h$ & $\delta=3h$ & $\delta=4h$ & $\delta=5h$ 
        & $\delta=2h$ & $\delta=3h$ & $\delta=4h$ & $\delta=5h$ \\ 
\hline
0.2     & 1.01e-2 & 6.89e-3 & 5.13e-3 & 3.90e-3& 2.51e-2 & 1.37e-2 & 9.35e-3 & 7.25e-3 \\
Rate    & --      & -0.94   & -1.03   &  -1.23 & --      & -1.49   & -1.33   & -1.14   \\  
\hline                                                                                
0.1     & 5.12e-3 & 3.60e-3 & 2.83e-3 & 2.26e-3& 2.73e-2 & 1.02e-2 & 6.67e-3 & 4.81e-3 \\
Rate    & --      & -0.87   & -0.84   & -1.01  & --      & -2.43   & -1.48   & -1.47   \\
\hline                                                                                
0.05    & 2.43e-3 & 1.72e-3 & 1.30e-3 & 1.02e-3& 3.55e-2 & 1.21e-2 & 5.91e-3 & 3.83e-3 \\
Rate    & --      & -0.85   & -0.97   & -1.09  & --      & -2.65   & -2.49   & -1.94   \\
\hline                                                                                
0.025   & 1.16e-3 & 8.14e-4 & 6.19e-4 & 4.99e-4& 6.51e-2 & 1.77e-2 & 7.81e-3 & 4.41e-3 \\
Rate    & --      & -0.87   & -0.95   & -0.97  & --      & -3.21   & -2.84   & -2.56   \\
\hline                                                                                
0.0125  & 5.73e-4 & 4.02e-4 & 3.08e-4 & 2.48e-4& 1.36e-1 & 3.66e-2 & 1.48e-2 & 7.54e-3 \\
Rate    & --      &-0.87    & -0.93   & -0.97  & --      & -3.24   & -3.15   & -3.02   \\
\hline                                                                                
0.00625 & 2.84e-4 & 1.98e-4 & 1.51e-4 & 1.21e-4& 2.67e-1 & 7.06e-2 & 2.94e-2 & 1.50e-2 \\
Rate    & --      & -0.89   & -0.94   & -0.99  & --      & -3.28   & -3.05   & -3.02   \\
\hline
\end{tabular}
\end{table}

\subsubsection{Numerical results  for the case of a fixed ratio between the horizon parameter and the mesh size}\label{Subsec:fix_r}
In this part, we fix $m=\delta/h$ as a constant while refining $\delta$. In this case \cref{udh_del}
and \cref{udh_L2} without the term $\delta^{(3+\mu)/2}$ turn out as
\begin{align}
\left\|\widetilde{u}_{0}-u_{\delta}^{h}\right\|_{\delta}\lesssim
\delta m^{-1},\label{udh_const_r-d}\\
\left\|u_{0}-u_{\delta}^{h}\right\|_{L^{2}\left(\Omega\right)}\lesssim
\delta m^{-1}.\label{udh_const_r-L2}
\end{align}

\cref{table:Exam1:ac:exactcap:energynorm} provides errors and
convergence rates of \emph{exactcaps} solution $u_{\delta}^{h}$ against the local exact solution $u_{0}$ in the energy norm with
$m=2$, $3$, $4$, and $5$, respectively. The convergence rates all exhibit the first order with respect to $\delta$, which are consistent with \cref{udh_const_r-d}. We also use the same data of errors to calculate the convergence rates with respect to $\delta$ for fixed $h$. To this end, notice that $h$ is constant in each column of \cref{table:Exam1:ac:exactcap:energynorm}. Thus, we rotate \cref{table:Exam1:ac:exactcap:energynorm} $90$ degrees to get the middle part of \cref{table:Exam1:ac:exactcap:energynorm:hfixed}. It shows nearly $-1$ order with respect to $\delta$ for a fixed $h$, which coincides with the error estimate \cref{udh_del} without the term $\delta^{(3+\mu)/2}$. 

\begin{table}[ht]
\centering
\caption{\cref{Example:Continuous}: errors $\|u_{0}-u_{\delta}^{h}\|_{L^{2}(\Omega)}$ and convergence rates, $\delta=mh$}\label{table:Exam1:ac:exactcap:L2norm} \footnotesize
\begin{tabular}{|c|cccccc|}
\hline $\delta_{0}/\delta$& $2^{0}$ & $2^{1}$ & $2^{2}$ & $2^{3}$ & $2^{4}$& $2^{5}$\\
\hline
$\delta_{0}$=0.4, $m$=2 &6.11e-3 & 1.56e-3 & 3.61e-4 & 9.75e-5 & 2.32e-5 & 4.92e-6 \\
 Rate                   & --     & 1.97    & 2.11    & 1.89    & 2.07    & 2.24    \\
\hline
$\delta_{0}$=0.6, $m$=3 &5.97e-3 & 1.75e-3 & 4.47e-4 & 1.08e-4 & 2.72e-5 & 7.03e-6 \\
 Rate                   & --     & 1.77    & 1.97    & 2.05    & 1.99    & 1.95    \\
\hline
$\delta_{0}$=0.8, $m$=4 &5.97e-3 & 1.46e-3 & 3.55e-4 & 1.01e-4 & 2.38e-5 & 5.68e-6 \\
 Rate                   & --     & 2.03    & 2.04    & 1.81    & 2.09    & 2.07    \\
\hline
$\delta_{0}$=1.0, $m$=5 &6.31e-3 & 1.49e-3 & 3.84e-4 & 9.75e-5 & 2.44e-5 & 6.07e-6 \\
 Rate                   & --     & 2.08    & 1.96    & 1.98    & 2.00    & 2.01    \\ 
\hline
\end{tabular}
\end{table}

In \cref{table:Exam1:ac:exactcap:L2norm} the results in the $L^{2}$ norm are reported. The convergence rates show the second order, across the table, which is better than that predicted by \cref{udh_const_r-L2}. Furthermore, we hardly observe any obvious drop or growth of errors when $\delta$ decreases for fixed $h$, see the data along each column. The two findings indicate that the error estimate \cref{udh_L2} is likely not sharp. It seems \cref{udh_L2} may be improved as
\begin{equation}\label{udh_L2_improve}
\left\|u_{0}-u_{\delta}^{h}\right\|_{L^{2}\left(\Omega\right)}\lesssim \delta^{2}+h^{2},
\end{equation}
which suggests the possible validity of the Aubin-Nitsche technique with respect to the horizon parameter, see also \cref{Nitche}.

Since $\delta=mh$, \cref{Error_Total_Del-sym} and \cref{Error_Total-sym} turn out as
\begin{align}
\|\widetilde{u}_{0}-u_{\delta,n_{\delta}}^{h}\|_{\delta,n_{\delta}}\lesssim \delta^{2}+
\delta^{-3}h^{2}\approx \delta^{-1} m^{-2},\label{udnd_const_r-d-nsym}\\
\|u_{0}-u_{\delta,n_{\delta}}^{h}\|_{L^{2}\left(\Omega\right)}\lesssim \delta^{2}+
\delta^{-3}h^{2}\approx \delta^{-1}m^{-2}.\label{udnd_const_r-L2-nsym}
\end{align}

\begin{table}[ht]
\centering
\caption{\cref{Example:Continuous}: errors $\|\widetilde{u}_{0}-u_{\delta,n_{\delta}}^{h}\|_{\delta,n_{\delta}}$ and convergence rates,  $\delta=mh$}\label{table:Exam1:ac:nocap:energynorm} \footnotesize
\begin{tabular}{|c|cccccc|}
\hline $\delta_{0}/\delta$ & $2^{0}$ & $2^{1}$ & $2^{2}$ & $2^{3}$ & $2^{4}$ & $2^{5}$\\
\hline
$\delta_{0}$=0.4, $m$=2 & 2.51e-2 & 2.73e-2  & 3.55e-2  & 6.51e-2  & 1.36e-1  & 2.67e-1 \\
 Rate & --      & -0.12    & -0.38    & -0.87    & -1.06    & -0.97   \\
\hline
$\delta_{0}$=0.6, $m$=3 & 1.37e-2 & 1.02e-2  & 1.21e-2  & 1.77e-2  & 3.66e-2  & 7.06e-2 \\
 Rate & --      & 0.43     & -0.25    & -0.55    & -1.05    & -0.95   \\
\hline
$\delta_{0}$=0.8, $m$=4 & 9.35e-3 & 6.67e-3  & 5.91e-3  & 7.81e-3  & 1.48e-2  & 2.94e-2 \\
 Rate & --      & 0.49     & 0.17     & -0.40    & -0.92    & -0.99   \\
\hline
$\delta_{0}$=1.0, $m$=5 & 7.25e-3 & 4.81e-3  & 3.83e-3  & 4.41e-3  & 7.54e-3  & 1.50e-2 \\
 Rate & --      & 0.59     & 0.33     & -0.20    & -0.77    & -0.99   \\
\hline
\end{tabular}
\end{table}

\cref{table:Exam1:ac:nocap:energynorm} provides errors and
convergence rates of \emph{nocaps} solution $u_{\delta,n_{\delta}}^{h}$ against the local exact solution $u_{0}$ in the energy norm. The convergence rates are near $-1$ order with respect to $\delta$ for decreasing $\delta$ under the setting $\delta=mh$ (as seen for each row), which coincides with \cref{udnd_const_r-d-nsym}. We also calculate the convergence rates with respect to $\delta$ for fixed $h$, see the right part of \cref{table:Exam1:ac:exactcap:energynorm:hfixed}. They show nearly $-3$ order, which coincides with \cref{Error_Total_Del-sym} without the term $\delta^{(3+\mu)/2}$.
In \cref{table:Exam1:ac:nocap:L2norm} the results
in the $L^{2}$ norm are reported. On the one hand, errors stabilize gradually around a certain value when $\delta$ decreases under the setting $\delta=mh$, which is better than that predicted by \cref{udnd_const_r-L2-nsym}. On the other hand, the rates show nearly $-2$ order with respect to $\delta$ for fixed $h$ (if computed along each column, similar to \cref{table:Exam1:ac:exactcap:energynorm:hfixed}, the numerical values are omitted to save space). The two findings indicate that the error estimate \cref{Error_Total-sym} is not sharp, and a possible improvement of \cref{Error_Total-sym} may be given by
\begin{equation}\label{Error_Total-sym-improve}
\|u_{0}-u_{\delta,n_{\delta}}^{h}\|_{L^{2}\left(\Omega\right)}\lesssim
\delta^{-2}h^{2}+\delta^{2}+h^{2}.
\end{equation}

\begin{table}[ht]
\centering
\caption{\cref{Example:Continuous}: errors $\|u_{0}-u_{\delta,n_{\delta}}^{h}\|_{L^{2}(\Omega)}$ and convergence rates, $\delta=mh$}
\label{table:Exam1:ac:nocap:L2norm} \footnotesize
\begin{tabular}{|c|cccccc|}
\hline $\delta_{0}/\delta$  & $2^{0}$ & $2^{1}$ & $2^{2}$ & $2^{3}$ & $2^{4}$ & $2^{5}$\\
\hline
$\delta_{0}$=0.4, $m$=2  & 2.08e-3  & 3.06e-3  & 3.52e-3  & 2.70e-3  & 2.20e-3 & 2.49e-3 \\
 Rate  & --       & -0.56    & -0.20    & 0.38     & 0.30    & -0.18   \\
\hline
$\delta_{0}$=0.6, $m$=3  & 2.68e-3  & 8.36e-4  & 1.66e-3  & 1.44e-3  & 1.21e-3 & 1.08e-3 \\
 Rate  & --       & 1.68     & -0.99    & 0.21     & 0.25    & 0.16    \\
\hline
$\delta_{0}$=0.8, $m$=4  & 3.39e-3  & 4.65e-4  & 7.63e-4  & 8.21e-4  & 7.33e-4 & 7.20e-4 \\
 Rate  & --       & 2.87     & -0.71    & -0.11    & 0.16    & 0.03    \\
\hline
$\delta_{0}$=1.0, $m$=5  & 3.76e-3  & 4.95e-4  & 4.64e-4  & 5.45e-4  & 4.71e-4 & 4.63e-4 \\
 Rate  & --       & 2.93     & 0.09     & -0.23    & 0.21    & 0.02    \\
\hline
\end{tabular}
\end{table}

\subsubsection{Numerical results for power law between the horizon parameter and
the mesh size}
The sharpness of the estimates \cref{udh_del} and \cref{Error_Total_Del-sym} has been discussed to some extent in the two subsections above. Here we further verify this sharpness by setting $h=O\left(\delta^{\beta}\right)$. By \cref{udh_del} and \cref{udh_L2} without the term $\delta^{(3+\mu)/2}$, it is expected that
\begin{equation}\label{Error_Numer_ex1}
\left\|\widetilde{u}_{0}-u_{\delta}^{h}\right\|_{\delta}\lesssim
\delta^{-1}h^{2}\sim\delta^{2\beta-1},\:\:
\left\|u_{0}-u_{\delta}^{h}\right\|_{L^{2}\left(\Omega\right)}\lesssim
\delta^{-1}h^{2}\sim\delta^{2\beta-1}.
\end{equation}
The errors for $\beta=1.1, \cdots, 2.0$ are plotted in \cref{Fig:Exam1:betagt1:exactcap}. Here $\delta$ is reduced to two-thirds of the previous step each time. Due to the increasing demand on the CPU time  
as $\beta$ increases, we take fewer $\delta$ refinement for larger $\beta$. In (A), the errors versus $\delta$ in the energy norm are plotted. We find that the convergence rates are of $2\beta-1$ order which is consistent with the estimate in the energy norm in \cref{Error_Numer_ex1}. In (B) the errors in the $L^2$ norm show the $2\beta$ order, which indicates again \cref{udh_L2} may be improved as \cref{udh_L2_improve}.

\begin{figure}[tbhp]
\centering
\subfloat[the energy norm]{\includegraphics[width=6cm]{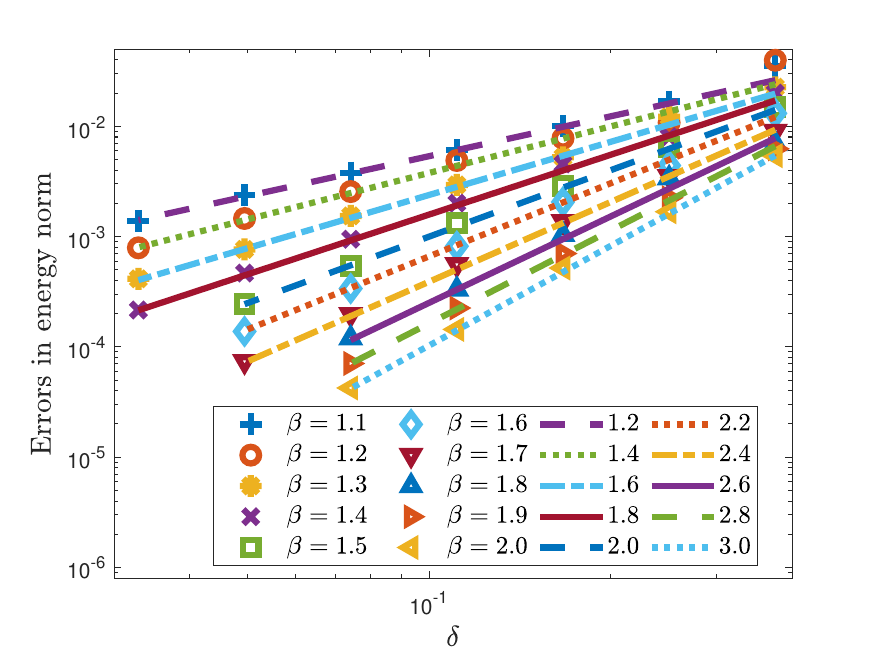}}
\subfloat[the $L^2$ norm]{\includegraphics[width=6cm]
{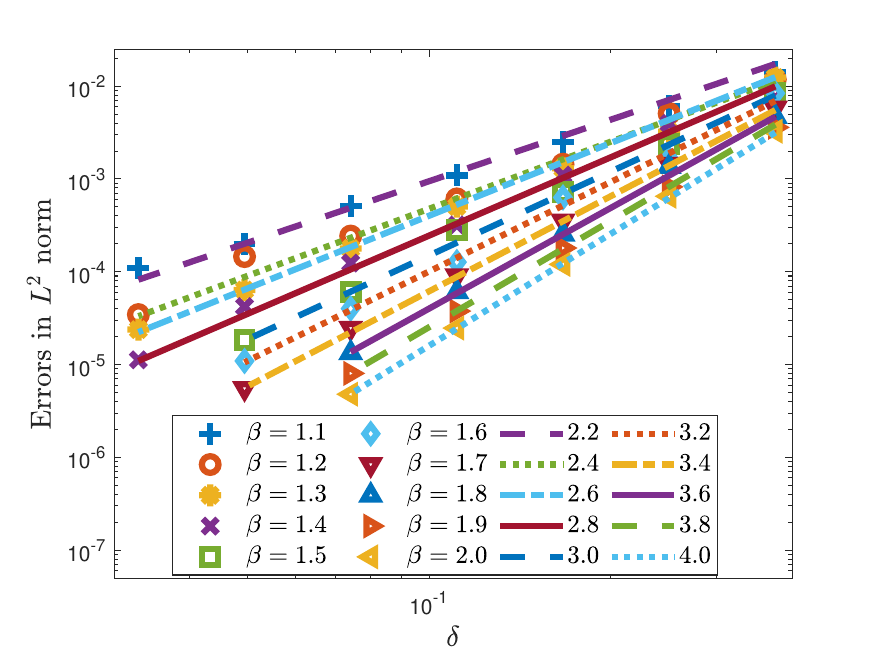}}
\caption{\cref{Example:Continuous}: errors and convergence
rates for \emph{exactcaps} solution $u_{\delta}^{h}$ against the local exact solution $u_{0}$ under the setting $h=O\left(\delta^{\beta}\right)$, $\beta\in(1,2]$}\label{Fig:Exam1:betagt1:exactcap}
\end{figure}

\begin{figure}[tbhp]
\centering
\subfloat[the energy norm]{\includegraphics[width=6cm]{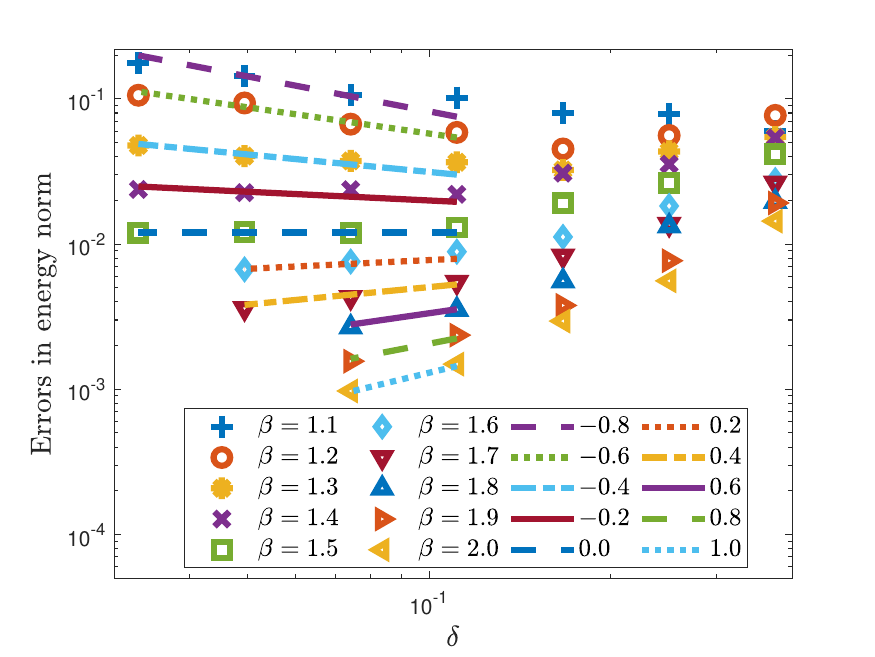}}
\subfloat[the $L^2$ norm]{\includegraphics[width=6cm]
{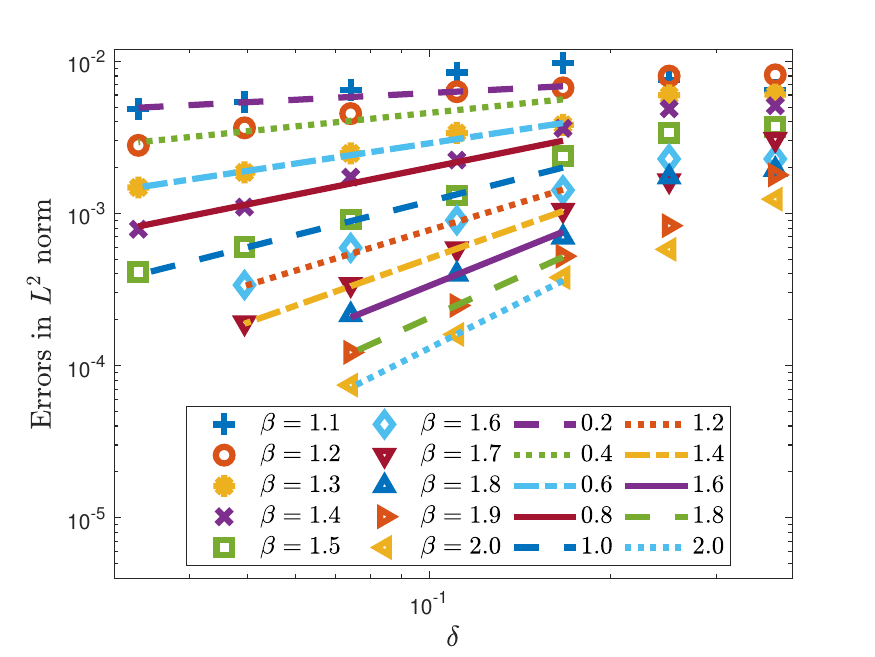}}
\caption{\cref{Example:Continuous}: errors in different norms
for \emph{nocaps} solution $u_{\delta,n_{\delta}}^{h}$ against the local exact solution $u_{0}$ under the setting $h=O\left(\delta^{\beta}\right)$, $\beta\in(1,2]$}\label{Fig:Exam1:CondAC}
\end{figure}

Since $h=O\left(\delta^{\beta}\right)$, \cref{Error_Total_Del-sym}
and \cref{Error_Total-sym} turn out to be given by
\begin{equation}\label{cond_AC_sym}
\|\widetilde{u}_{0}-u_{\delta,n_{\delta}}^{h}\|_{\delta,n_{\delta}}\lesssim
\delta^{2\beta-3},\:\:
\|u_{0}-u_{\delta,n_{\delta}}^{h}\|_{L^{2}\left(\Omega\right)}\lesssim
\delta^{2\beta-3}.
\end{equation}
We plot the results of $u_{\delta,n_{\delta}}^{h}$ in \cref{Fig:Exam1:CondAC}. The errors versus $\delta$ in the energy norm are plotted in (A). The convergence rates
show nearly $2\beta-3$ order which coincides with the estimate in the energy norm in \cref{cond_AC_sym}. It is worth mentioning that errors in the energy norm decrease monotonically only for $\beta>1.5$. In (B) the errors in the $L^2$ norm show nearly $2\beta-2$ order which again suggests \cref{Error_Total-sym} be improved as
\cref{Error_Total-sym-improve}.

\subsection{Nonlocal problems with a perturbed RHS function and VC}\label{subsec:Numer:pert}
\begin{example}\label{Example:Discontinuous}
We consider nonlocal problems \cref{nonlocal_diffusion} on the domain $\Omega=(0,1)^{2}$
with the family of kernels $\{\gamma_{\delta}\}$ defined by \cref{const_kernel}. We set the RHS function as
\begin{equation}\label{rhs_pert}
f_{\delta}({\bf x})=-2(x_{2}+1)+\delta^2e^{x_{1}^2+3x_{2}^2}, \:\:\mbox{for}\:\:
{\bf x}\in\Omega,
\end{equation}
and the following two kinds of VCs
\begin{align}
g_{\delta}({\bf x}) &= x_{1}^2x_{2}+x_{2}^{2}+\delta^2\sin(x_{1}-2x_{2}),\:\:\mbox{for}\:\: {\bf x}\in\Omdc,\label{dsquare}\\
g_{\delta}({\bf x}) &= x_{1}^2x_{2}+x_{2}^{2}+\delta^3\sin(x_{1}-2x_{2}),\:\:\mbox{for}\:\: {\bf x}\in\Omdc,\label{dtriple}
\end{align}
which satisfy \cref{bc-rhs-est} with $\mu=0$ and $\mu=1$, respectively. The corresponding local problem is the same as \cref{Example:Continuous}. Here the nonlocal solutions $u_{\delta}$ and $u_{\delta,n_{\delta}}$ are all discontinuous across $\partial\Omega$.
\end{example}

Most numerical results and findings here are similar to those in \cref{subsec:Numer:exact}, so we do not repeat such discussions and only show the numerical results for the case of a fixed ratio between the horizon parameter and the mesh size. \cref{table:Exam2:d2:mcons2,table:Exam2:d3:mcons2} provide the results of \emph{exactcaps} and \emph{nocaps} solutions with $m=2$, for VCs \cref{dsquare,dtriple}, respectively. The convergence rates of $u_{\delta}^{h}$ for both VCs against $\widetilde{u}_{0}$ in the $L^{2}$ norm show second order, which is similar to the case $m=2$ in \cref{table:Exam1:ac:exactcap:L2norm}. 

\begin{table}[ht]
\centering
\caption{\cref{Example:Discontinuous}: errors and convergence rate of nonlocal numerical solutions $u_{\delta}^{h}$ and $u_{\delta,n_{\delta}}^{h}$ with VC \cref{dsquare} against the local exact solution $\widetilde{u}_{0}$, $\delta_{0}=0.4$, $\delta=2h$}\label{table:Exam2:d2:mcons2} \footnotesize
\begin{tabular}{|c|cccc|cccc|}
\hline
& \multicolumn{4}{c|}{Energy norm}& \multicolumn{4}{c|}{$L^{2}$ norm}\\
\hline
$\delta_{0}/\delta$ & $\|\widetilde{u}_{0}-u_{\delta}^{h}\|_{\delta}$ &  Rate & $\|\widetilde{u}_{0}$-$u_{\delta,n_{\delta}}^{h}\|_{\delta,n_{\delta}}$ &  Rate
 & $\|u_{0}-u_{\delta}^{h}\|$ &  Rate & $\|u_{0}$-$u_{\delta,n_{\delta}}^{h}\|$ & Rate\\
\hline
$2^{0}$ & 2.80e-1 &  --  & 2.74e-1 &  --   & 5.83e-2 &  --   & 5.61e-2 &  --    \\
\hline  
$2^{1}$ & 1.25e-1 & 1.17 & 1.29e-1 & 1.09  & 1.49e-2 & 1.97  & 1.54e-2 & 1.86   \\
\hline  
$2^{2}$ & 5.15e-2 & 1.28 & 6.46e-2 & 0.99  & 4.08e-3 & 1.87  & 5.74e-3 & 1.42   \\
\hline  
$2^{3}$ & 1.96e-2 & 1.40 & 6.93e-2 & -1.01 & 1.06e-3 & 1.94  & 2.94e-3 & 0.97   \\
\hline  
$2^{4}$ & 7.20e-3 & 1.44 & 1.36e-1 & -0.97 & 2.73e-4 & 1.96  & 2.19e-3 & 0.42   \\
\hline  
$2^{5}$ & 2.60e-3 & 1.47 & 2.67e-1 & -0.97 & 6.92e-5 & 1.98  & 2.46e-3 & -0.16  \\
\hline
\end{tabular}
\end{table}
 
\begin{table}[ht]
\centering
\caption{\cref{Example:Discontinuous}: errors and convergence rate of nonlocal numerical solutions $u_{\delta}^{h}$ and $u_{\delta,n_{\delta}}^{h}$ with VC \cref{dtriple} against the local exact solution $\widetilde{u}_{0}$, $\delta_{0}=0.4$, $\delta=2h$}\label{table:Exam2:d3:mcons2}\footnotesize
\begin{tabular}{|c|cccc|cccc|}
\hline
& \multicolumn{4}{c|}{Energy norm}& \multicolumn{4}{c|}{$L^{2}$ norm}\\
\hline
$\delta_{0}/\delta$ & $\left\|\widetilde{u}_{0}-u_{\delta}^{h}\right\|_{\delta}$ &  Rate & $\|\widetilde{u}_{0}$-$u_{\delta,n_{\delta}}^{h}\|_{\delta,n_{\delta}}$ &  Rate
 & $\left\|u_{0}-u_{\delta}^{h}\right\|$ &  Rate & $\|u_{0}$-$u_{\delta,n_{\delta}}^{h}\|$ &  Rate\\
\hline
$2^{0}$ & 2.28e-1 &  --  & 2.15e-1 &  --   & 6.13e-2 &  --  & 5.75e-2 &  --   \\
\hline
$2^{1}$ & 4.56e-2 & 2.32 & 3.86e-2 & 2.48  & 1.10e-2 & 2.48 & 7.46e-3 & 2.95  \\
\hline
$2^{2}$ & 1.06e-2 & 2.10 & 3.31e-2 & 2.23  & 2.40e-3 & 2.19 & 1.84e-3 & 2.02  \\
\hline
$2^{3}$ & 2.86e-3 & 1.90 & 6.50e-2 & -0.98 & 5.83e-4 & 2.04 & 2.36e-3 & -0.36 \\
\hline
$2^{4}$ & 8.65e-4 & 1.72 & 1.36e-1 & -1.06 & 1.42e-4 & 2.03 & 2.15e-3 & 0.13  \\
\hline
$2^{5}$ & 3.26e-4 & 1.41 & 2.67e-1 & -0.98 & 3.44e-5 & 2.05 & 2.51e-3 & -0.22 \\
\hline
\end{tabular}
\end{table}

The convergence results of $u_{\delta,n_{\delta}}^{h}$ for both VCs against $\widetilde{u}_{0}$ in the energy and the $L^2$ norms exhibit about $-1$ order and zeroth order, respectively. These results are similar to the corresponding results in \cref{table:Exam1:ac:nocap:energynorm,table:Exam1:ac:nocap:L2norm} although the rates in \cref{table:Exam2:d2:mcons2,table:Exam2:d3:mcons2} have much larger oscillations. The reason is similar to the seemingly abnormal rates for $u_{\delta}^{h}$ against $\widetilde{u}_{0}$ in the energy norm, which can be explained as follows.

For errors of $u_{\delta}^{h}$ in the energy norm, \cref{udh_del} turns out to be
\begin{align}
\left\|\widetilde{u}_{0}-u_{\delta}^{h}\right\|_{\delta}&\leq C_{7}\delta^{3/2}+C_{8}\delta
\label{udh_const_r-d_pert}\\
\left\|\widetilde{u}_{0}-u_{\delta}^{h}\right\|_{\delta}&\leq C_{7}\delta^{2}+C_{8}\delta
\label{udh_const_r-d_pert-3}
\end{align}
for VCs \cref{dsquare} and \cref{dtriple}, respectively.
However, the convergence rates in \cref{table:Exam2:d2:mcons2} appear to be $3/2$
order which seems inconsistent with \cref{udh_const_r-d_pert}.  
The rates in \cref{table:Exam2:d3:mcons2} have a decreasing trend. They might eventually reach the first 
order, which would coincide with \cref{udh_const_r-d_pert-3}. In fact, by \cref{Total_Err_En} it holds that
\begin{equation*}
\left\|\widetilde{u}_{0}-u_{\delta}^{h}\right\|_{\delta}\leq
\left\|u_{\delta}-\widetilde{u}_{0}\right\|_{\delta}+
\left\|u_{\delta}-u_{\delta}^{h}\right\|_{\delta}=E_{1}+E_{2}.
\end{equation*}

Recall that in \cref{Example:Continuous} the RHS function and VC are all exact such that $u_{\delta}({\bf x})=\widetilde{u}_{0}({\bf x})$ for ${\bf x}\in\Omdh$. So, the term $E_{1}$ vanishes, which leads to \cref{udh_const_r-d}. The convergence rate of $u_{\delta}^{h}$
is stable around the first order (see \cref{table:Exam1:ac:exactcap:energynorm}), which confirms \cref{udh_const_r-d}. Although the convergence rates in \cref{table:Exam2:d2:mcons2,table:Exam2:d3:mcons2} are greater than the first order, the errors are significantly larger than that in \cref{Example:Continuous}. So, it can be reasonably argued that for the RHS \cref{rhs_pert} together with VCs \cref{dsquare,dtriple}, $E_{1}$ dominates the total error in the first few steps, and then $E_{2}$ takes over (it could be understood that $C_{7}$ is larger than $C_{8}$). Unfortunately, it is rather computationally demanding to carry out the last step in \cref{table:Exam2:d2:mcons2,table:Exam2:d3:mcons2}, that is $\delta_{0}/\delta=2^{5}$. To validate the explanation above, we turn to the simpler 1D counterpart.

\begin{example}\label{Example:Discontinuous1d}
We consider the nonlocal problem \cref{nonlocal_diffusion} on the domain $\Omega=(0,1)$
with the family of kernels $\{\gamma_{\delta}\}$ defined by \cref{Con_Kernel_d} in 1D case. Set RHS function 
\begin{equation}\label{rhs_pert_1d}
f_{\delta}(x)=-6x+\delta^2e^{x}
,\:\mbox{for}\:\: x\in\Omega,
\end{equation}
and the following two kinds of VCs
\begin{equation}\label{dsquare1d}
g_{\delta}(x) = x^{3}+\delta^2\sin(x),\:\mbox{for}\:\: x\in\Omdc,
\end{equation}
and
\begin{equation}\label{dtriple1d}
g_{\delta}(x) = x^{3}+\delta^3\sin(x),\:\mbox{for}\:\: x\in\Omdc,
\end{equation}
respectively. The solution of the corresponding local problem \cref{local_diffusion} with $f_{0}(x)=-6x$ and $g_{0}(x)=x^{3}$ is
\begin{equation*}
u_{0}(x)=x^{3}.
\end{equation*}
The \emph{exactcaps} solution is used to numerically
solve the nonlocal problem.
\end{example}
Let $\delta_{0}=0.3$, $m=3$, we use quasi-uniform meshes obtained from a randomly perturbed uniform mesh. To be specific, set $h=1/n_{1}$, then $\left\{x_{i}^{u}=ih:\:i=0,1,\cdots,n_{1}\right\}$ is a uniform mesh of $[0,1]$. The quasi-uniform mesh is obtained by adding a random vector ${\bf \varepsilon}\in\mathbb{R}^{n_{1}-1}$ (which obeys the uniform distribution on $[-0.2h,0.2h]$) to $x_{i}^{u}$ to reach $x_{i}^{u}+\varepsilon_i,\:i=1,2,\cdots,n_{1}-1$. Together with $x_{0}^{u}$ and $x_{n_{1}}^{u}$, the new mesh grids are constructed as follows
\begin{equation}\label{nonuni_grid}
x_{i}=x_{i}^{u}+\varepsilon_i,\:i=1,2,\cdots,n_{1}-1,\: x_{0}=x_{0}^{u},\: x_{n_{1}}=x_{n_{1}}^{u}.
\end{equation}
We have done over twenty tests of different random perturbations, and the convergence rates are all similar. Thus, instead of listing all of them, we select one test to verify our theoretical analysis. \cref{table:Exam3:EnNorm} provides errors and
convergence rates of $u_{\delta}^{h}$ for RHS \cref{rhs_pert_1d} together with VCs \cref{dsquare1d,dtriple1d} in the energy norm. It is seen that, in the first six steps, the convergence results are similar to the counterpart of the 2D case (see
\cref{table:Exam2:d2:mcons2,table:Exam2:d3:mcons2}). And then, as we expected earlier, the convergence rates approach the first order gradually. We also supply the error in the energy norm of the linear CDG approximation with an exact RHS and VC in boldface for the remaining seven steps. Since the error of $u_{\delta}^{h}$ with RHS \cref{rhs_pert_1d} and VC \cref{dtriple1d} is very close to that with the exact ones at the seventh step, its convergence rate approaches the first order there. While for the rate with the RHS \cref{rhs_pert_1d} and VC \cref{dsquare1d}, it takes more steps to reach the first order.

\begin{table}[ht]
\caption{\cref{Example:Discontinuous1d}: errors in the energy norm and convergence rates for \emph{exactcaps} solution}\label{table:Exam3:EnNorm} \footnotesize
\begin{tabular}{|c|cccc|c|cccc|c|}  
\hline
$\delta_{0}/\delta$ & \cref{dsquare1d} & Rate & \cref{dtriple1d} & Rate
&$2^{6}$ &3.41e-4&1.49&7.95e-5&0.96&$\mathbf{7.94e}$-$\mathbf{5}$ \\
\hline                                                                                                      
$2^{0}$ &2.06e-1& -- &9.79e-2& -- &$2^{7}$ &1.25e-4&1.45&3.97e-5&1.00&$\mathbf{3.96e}$-$\mathbf{5}$ \\
\hline                                                                                            
$2^{1}$&6.62e-2&1.64&1.90e-2&2.37&$2^{8}$ &4.58e-5&1.45&1.93e-5&1.04&$\mathbf{1.93e}$-$\mathbf{5}$ \\
\hline                                                                                            
$2^{2}$&2.25e-2&1.56&3.98e-3&2.25&$2^{9}$ &1.75e-5&1.39&9.62e-6&1.00&$\mathbf{9.63e}$-$\mathbf{6}$ \\
\hline                                                                                            
$2^{3}$&7.72e-3&1.54&1.01e-3&1.98&$2^{10}$&7.11e-6&1.30&4.92e-6&0.97&$\mathbf{4.92e}$-$\mathbf{6}$ \\
\hline                                                                                            
$2^{4}$&2.69e-3&1.52&3.46e-4&1.55&$2^{11}$&3.04e-6&1.23&2.44e-6&1.01&$\mathbf{2.40e}$-$\mathbf{6}$ \\
\hline                                                                                            
$2^{5}$&9.54e-4&1.49&1.55e-4&1.16&$2^{12}$&1.38e-6&1.14&1.22e-6&1.01&$\mathbf{1.20e}$-$\mathbf{6}$ \\
\hline
\end{tabular}
\end{table}

\begin{table}[ht]
\centering
\caption{Error estimate in the energy norm and implementation issue for the linear CDG solutions, $\lambda=2$}\label{table:summary} \footnotesize
\begin{tabular}{|c|c|c|c|}
\hline 
Interaction neighborhood  & $\left\|u_{\sharp}-\widetilde{u}_{0}\right\|_{\sharp}$ (Con) & $\left\|u_{\sharp}^{h}-\widetilde{u}_{0}\right\|_{\sharp}$ (Dis)  &  Implementation cost  \\    
\hline                                                               
 Euclidean ball $\sharp=\delta$& $\delta^{2}$     & $\delta^{2}+\delta^{-1}h^{2}$ & Demanding   \\
 Symmetric polygon $\sharp$=$(\delta|n_{\delta})$  & $\delta^{2}+n_{\delta}^{-2}$ & --    & Very demanding   \\
 Polygon $\sharp=(\delta,n_{\delta})$ & $\delta^{2}+\delta^{-1}n_{\delta}^{-2}$& $\delta^{2}+\delta^{-3}h^{2}$ & Less demanding    \\
\hline
\end{tabular}
\end{table}

\section{Concluding remarks}\label{sec:concl}
In this work, we estimated, in both energy and $L^{2}$ norms, the errors between the linear CDG solutions for some linear nonlocal problems and the solution of the local limit, simultaneously with respect to the horizon parameter and mesh size. Let us summarize in \cref{table:summary} the error estimates of the two linear CDG solutions in the energy norm, along with their implementation costs. Here (Con) stands for the continuum level, while (Dis) stands for the discrete level. For the case $\delta|n_{\delta}$ since it lacks an inner product and the induced norm, the corresponding error estimate on the discrete level is not given while the error estimate on the continuous level is actually measured in $\|\cdot\|_{\delta}$ norm.

\subsection{Other numerical methods}
Besides the CDG method, error estimates for other types of numerical solutions of nonlocal problems (like mesh-free method, collocation method, quadrature-based finite difference method) against the exact local solution may also be carried out in two steps like in this paper. Step 1 (on the continuum level): the error estimate of the nonlocal solutions with different interaction neighborhoods against the local exact solution, which is almost the same as the derivation in \cref{sec:Conv_d}. Step 2 (on the discrete level): the error estimate of the numerical solutions against the nonlocal exact solution removing the impact by the approximation of interaction neighborhood, which plays the same role as the conforming DG method in \cref{sec:Conv_h}. It should be noted that one does not always need to follow the same 2-step process here; for example in \cite{du2016asymptotically}, a different 2-step process has been given for Fourier spectral methods of nonlocal Allen-Cahn equation (1D in space). However, for numerical analysis in higher dimensions and with polygonal approximation to the original interaction neighborhood (Euclidean ball), our 2-step analysis could be more applicable.

\subsection{Other nonlocal problems}
We have focused on nonlocal problems with $L^1$ kernels and Dirichlet-type VCs and piecewise smooth data. The approach can be extended to Neumann or other types of VCs. 
One could consider more general kernels that might not be $L^1$. Theoretically,
in such cases, nonlocal problems with nonhomogeneous boundary data can be studied by
utilizing analytical findings given in \cite{du2022nonlocal}. The discussion on the effect of quadrature on the interaction neighborhoods will be more demanding due to potential singularities of the kernels used, although the modifications to the interaction neighborhoods are done generally away from such singularities. In this sense, we expect similar studies can be carried out. Furthermore, one might study extensions to other nonlocal problems, both nonlocal variational problems and nonlocal dynamical systems for which issues like the convergence of the nonlocal numerical solutions to the exact local continuum limit have also been considered either theoretically or numerically \cite{glusa2023asymptotically,jha2021finite,luo2023asymptotically,pasetto2022efficient,trask2019asymptotically,you2020asymptotically,yu2021asymptotically}.

\bibliographystyle{amsplain}
\bibliography{references}
\end{document}